\newcommand{\surfint}{{\int_{\partial\Omega}}}
\newcommand{\BB}{{\mathbb{B}^d}}
\newcommand{\RR}{{\mathbb{R}}}
\newcommand{\DD}{{\mathcal{D}}}
\newcommand{\TT}{{\mathrm T}}
\renewcommand{\epsilon}{{\varepsilon}}
\newcommand{\rhat}{{\hat{r}}}
\newcommand{\nhat}{{\hat{n}}}
\newcommand{\that}{{\hat{t}}}
\newcommand{\thetahat}{{\hat{\theta}}}
\newcommand{\rp}{{\rho^\prime}}
\newcommand{\rpp}{{\rho^{\prime\prime}}}
\newcommand{\Rp}{{R^{\prime}}}
\newcommand{\Rpp}{{R^{\prime\prime}}}
\newcommand{\ainf}{{a_\infty}}
\newcommand{\ainfsq}{{a_\infty^2}}
\newcommand{\ostar}{{\omega^*}}
\newcommand{\Ostar}{{\Omega^*}}
\newcommand{\Ls}{{\Delta_S}}
\newcommand{\Ns}{{\nabla_{\!\!S}}}
\newcommand{\Yl}{{Y_l}}
\newcommand{\surfintB}{{\int_{\partial\BB}}}
\newcommand{\ubar}{{\overline{u}}}
\DeclareMathOperator{\BigO}{O}
\DeclareMathOperator{\trace}{tr}
\DeclareMathOperator{\grad}{grad}
\DeclareMathOperator{\mydiv}{div}
\DeclareMathOperator{\sign}{sign}
\newcommand{\sproj}{{P_{\partial\Omega}}}
\newcommand{\sgrad}{{\grad_{\partial\Omega}}}
\newcommand{\sdiv}{{\mydiv_{\partial\Omega}}}
\newtheorem{thm}{Theorem}
\newtheorem{lemma}{Lemma}[chapter]
\newtheorem{prop}{Proposition}
\newtheorem{fact}[lemma]{Fact}
\newtheorem{cor}[lemma]{Corollary}
\theoremstyle{remark}
\newtheorem*{rmk}{Remark}
\newtheoremstyle{newdiffnotenonum}{}{}{\itshape}{}{\bfseries}{.}{ }%
  {\thmname{#1}\thmnote{( \mdseries #3)}}
\theoremstyle{newdiffnotenonum}
\newtheorem{thm2pp}{Theorem 2$\,^{\prime\prime}$}
\begin{document}
\title{An isoperimetric inequality for fundamental tones of free plates}
\author{Laura Chasman}
\department{Mathematics}
\schools{B.S., California Institute of Technology, 2003}
\phdthesis
\advisor{Richard Laugesen}
\degreeyear{2009}
\committee{Associate Professor Jared Bronski, Chair \\ Associate Professor Dirk Hundertmark\\ Associate Professor Richard Laugesen\\ Assistant Professor Eduard Kirr}
\maketitle 
\frontmatter
\begin{abstract}
We establish an isoperimetric inequality for the fundamental tone (first nonzero eigenvalue) of the free plate of a given area, proving the ball is maximal. Given $\tau>0$, the free plate eigenvalues $\omega$ and eigenfunctions $u$ are determined by the equation $\Delta\Delta u-\tau\Delta u = \omega u$ together with certain natural boundary conditions. The boundary conditions are complicated but arise naturally from the plate Rayleigh quotient, which contains a Hessian squared term $|D^2u|^2$.

We adapt Weinberger's method from the corresponding free membrane problem, taking the fundamental modes of the unit ball as trial functions. These solutions are a linear combination of Bessel and modified Bessel functions.
\end{abstract}
\chapter*{ACKNOWLEDGMENTS} I am grateful to the University of Illinois
Department of Mathematics and the Research Board for support during my
graduate studies, and the National Science Foundation for graduate student support
under grants DMS-0140481 (Laugesen) and DMS-0803120 (Hundertmark) and DMS 99-83160
(VIGRE), and the University of Illinois Department of Mathematics for travel support to attend the 2007 Sectional meeting of the AMS in New York. I would also like to thank the Mathematisches Forschungsinstitut Oberwolfach for travel support to attend the workshop on Low Eigenvalues of Laplace and Schr\"{o}dinger Operators in 2009.

\tableofcontents
%\listoffigures

\mainmatter
\chapter{Plates and the isoperimetric problem}
Isoperimetric problems are about minimizing or maximizing a quantity subject to constraints. The classical isoperimetric inequality states that of all planar regions of the same perimeter, the disk has maximal area. Equivalently, of all regions of the same area, the disk minimizes perimeter. The three-dimensional version can be observed physically quite readily - many mammals sleep curled up in a ball, while keeping their volume the same, to minimize surface area and hence heat loss.

Many physical quantities satisfy isoperimetric-type inequalities. The goal of this thesis is to prove an isoperimetric result for a free plate under tension with unconstrained edges: of all such plates having the same area, the disk has the highest fundamental pitch.

Researchers have investigated and proved isoperimetric inequalities regarding frequencies of vibration in related situations. Lord Rayleigh conjectured, and Faber and Krahn proved, that of all membranes of the same area with constrained edges, a circular drum produces the lowest pitch. Kornhauser and Stakgold conjectured the opposite bound for a membrane with unconstrained edges; this result was proven by Szeg\H o and Weinberger. This thesis generalizes their result to plates under tension. Plate problems are more difficult than membrane problems because they involve the bi-Laplacian rather than the Laplacian.

\subsection*{Mathematical formulation}
We now develop the mathematical formulation of the free plate isoperimetric problem.
Let $\Omega$ be a smoothly bounded region in $\RR^d$, $d\geq 2$, and fix a parameter $\tau > 0$. The ``plate'' Rayleigh quotient is
\begin{equation}
Q[u] = \frac{\int_\Omega |D^2 u|^2 + \tau |D u|^2\,dx}{\int_\Omega |u|^2\,dx}. \label{RQ}
\end{equation}
Here $|D^2u|=(\sum_{jk}u_{x_jx_k}^2)^{1/2}$ is the Hilbert-Schmidt norm of the Hessian matrix $D^2u$ of $u$, and $Du$ denotes the gradient vector.

Physically, when $d=2$ the region $\Omega$ is the shape of a homogeneous, isotropic plate. The parameter $\tau$ represents the ratio of lateral tension to flexural rigidity of the plate; for brevity we refer to $\tau$ as the tension parameter. Positive $\tau$ corresponds to a plate under tension, while taking $\tau$ negative would give us a plate under compression. The function $u$ describes a transverse vibrational mode of the plate, and the Rayleigh quotient $Q[u]$ gives the bending energy of the plate.

From the Rayleigh quotient \eqref{RQ}, we will derive in Chapter~\ref{spectrum} the partial differential equation and boundary conditions governing the vibrational modes of a free plate. The critical points of \eqref{RQ} are the eigenstates for the plate satisfying the free boundary conditions and the critical values are the corresponding eigenvalues. The equation is:
\begin{equation}
\Delta \Delta u - \tau \Delta u = \omega u, \label{maineq}
\end{equation}
where $\omega$ is the eigenvalue, with the natural (\emph{i.e.}, unconstrained or ``free'') boundary conditions on $\partial\Omega$:
\begin{align}
&Mu := \frac{\partial^2 u}{\partial n^2}= 0\\
&Vu := \tau\frac{\partial u}{\partial n}-\sdiv\left(\sproj\left[(D^2u)n\right]\right)-\frac{\partial(\Delta u)}{\partial n} = 0
\end{align}
Here $n$ is the outward unit normal to the boundary and $\sdiv$ and $\sgrad$ are the surface divergence and gradient.

The eigenvalue equation \eqref{maineq} can also be obtained by separating the plate wave equation
\[
\phi_{tt}=-\Delta\Delta\phi+\tau\Delta\phi,
\]
by the separation $\phi(x,t)=u(x)\cos(\sqrt{\omega}t)$. The eigenvalue $\omega$ is therefore the square of the frequency of vibration of the plate. The quantities appearing as boundary conditions have physical significance as well. The expression $Mu$ is the \emph{bending moment}. As the plate bends, one side compresses while the other expands, leading to a restoring moment which must vanish at an unconstrained edge. 

\subsection*{The problem}
We will prove in Chapter~\ref{spectrum} that the spectrum of the free plate under tension is discrete, consisting entirely of eigenvalues with finite multiplicity:
\[
 0=\omega_0<\omega_1\leq\omega_2\leq\dots\rightarrow\infty.
\]
 We also have a complete $L^2$-orthonormal set of eigenfunctions $u_0\equiv$ const, $u_1$, $u_2$, and so forth.

We call $u_1$ the \emph{fundamental mode} and the eigenvalue $\omega_1$ the \emph{fundamental tone}; the latter can be expressed using the Rayleigh-Ritz variational formula:
\[
\omega_1(\Omega)=\min\{Q[u]:u\in H^2(\Omega), \int_\Omega u\,dx=0\}.
\]
In general, the $k$th eigenvalue is the minimum of $Q[u]$ over the space of all functions $u$ $L^2$-orthogonal to the eigenfunctions $u_0$, $u_1$,$\dots$, $u_{k-1}$. Because $u_0$ is the constant function, the condition $u\perp u_0$ can be written $\int_\Omega u\,dx=0$.

Let $\Omega^*$ denote the ball with the same volume as $\Omega$. The main goal of this thesis is to prove the following theorem.
\begin{thm}\label{thm1} For all smoothly bounded regions of a fixed volume, the fundamental tone of the free plate with a given positive tension is maximal for a ball. That is, if $\tau>0$ then
\begin{equation}
\omega_1(\Omega) \leq \omega_1(\Omega^*), \qquad\text{with equality if and only if $\Omega$ is a ball.} \label{isoineq}
\end{equation}
\end{thm}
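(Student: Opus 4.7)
The plan is to adapt Weinberger's method for the free membrane to this fourth-order problem. Using the Rayleigh--Ritz characterization
\[
\omega_1(\Omega)=\min\Bigl\{Q[u]:u\in H^2(\Omega),\ \int_\Omega u\,dx=0\Bigr\},
\]
it suffices to exhibit, for any $\Omega$, admissible trial functions whose Rayleigh quotients are bounded above by $\omega_1(\Ostar)$. The natural choice is built from the fundamental modes of the ball: on $\Ostar$ of radius $R$, the eigenvalue $\omega_1(\Ostar)$ is realized by the $d$-dimensional family $\phi_i(x)=\rho(|x|)\,x_i/|x|$, $i=1,\dots,d$, where the radial profile $\rho$ is a specific linear combination of a spherical Bessel function $j_1(ar)$ and a modified spherical Bessel function $i_1(br)$, with $a,b>0$ determined by $\tau$ and $\omega_1(\Ostar)$ and with the coefficients chosen so that the free boundary conditions $M\phi_i=V\phi_i=0$ hold at $r=R$.

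First I would extend $\rho$ from $[0,R]$ to $[0,\infty)$ so that each $\phi_i$ lies in $H^2_{\mathrm{loc}}(\RR^d)$; the condition $M\phi_i=0$ matches $\rho''(R)=0$, which makes a $C^{1,1}$ extension natural, and the extension should be chosen so that $\rho(r)^2$ is nondecreasing while a companion radial integrand $F(r)$ introduced below is nonincreasing on $r>R$. Next I would use a Brouwer fixed-point argument to choose $x_0\in\RR^d$ for which the shifted trial functions $\widetilde\phi_i(x):=\phi_i(x-x_0)$ all satisfy $\int_\Omega \widetilde\phi_i\,dx=0$; continuity of the map $x_0\mapsto(\int_\Omega\widetilde\phi_i\,dx)_{i=1}^d$ together with its sign behavior as $|x_0|\to\infty$ supplies the required zero.

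Each $\widetilde\phi_i$ is then admissible in Rayleigh--Ritz. Summing the $d$ inequalities $\omega_1(\Omega)\int_\Omega|\widetilde\phi_i|^2\,dx\leq \int_\Omega(|D^2\widetilde\phi_i|^2+\tau|D\widetilde\phi_i|^2)\,dx$ and using the rotational identity $\sum_i (x_i/r)^2=1$ (and its analogues for derivatives of the angular part) collapses the vector bound to a scalar inequality
\[
\omega_1(\Omega)\int_\Omega \rho(r)^2\,dx \ \leq\ \int_\Omega F(r)\,dx,\qquad r=|x-x_0|,
\]
where $F$ is a definite expression in $\rho,\rho',\rho''$ and $r$. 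By construction, for $r\in(0,R]$ the profile $\rho$ satisfies the radial eigenvalue ODE, which forces the pointwise identity $F(r)=\omega_1(\Ostar)\,\rho(r)^2$; hence on $\Ostar$ the summed inequality is an equality with constant $\omega_1(\Ostar)$.

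The final step is to close the comparison by a rearrangement argument. Using the monotonicity of $\rho^2$ and $F$ on $r>R$, one can exchange the contributions over $\Omega\setminus\Ostar$ and $\Ostar\setminus\Omega$ (both centered at $x_0$) to obtain
\[
\int_\Omega F(r)\,dx \ \leq\ \omega_1(\Ostar)\int_\Omega \rho(r)^2\,dx,
\]
which combined with the summed Rayleigh--Ritz bound yields $\omega_1(\Omega)\leq\omega_1(\Ostar)$. Strict inequality unless $\Omega$ is a ball centered at $x_0$ should drop out of the strictness in the rearrangement. The hard part will be the pointwise comparison $F(r)\leq \omega_1(\Ostar)\,\rho(r)^2$ for $r>R$ together with the monotonicity of $F$ and $\rho^2$ there: these rest on delicate identities for Bessel and modified Bessel functions---recurrences, Wronskians, and sign analysis of derivatives---and on selecting the extension of $\rho$ carefully enough that the required inequalities persist beyond the ball's radius.
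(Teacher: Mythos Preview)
Your overall plan follows the paper's Weinberger-style approach closely---linear extension of the radial profile (using $\rho''(R)=0$), the Brouwer argument for centering, and summing the $d$ Rayleigh--Ritz inequalities to obtain a scalar bound with radial integrands---but there is a genuine error in how you propose to close the argument.

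The claim that the radial ODE forces a \emph{pointwise} identity $F(r)=\omega_1(\Ostar)\,\rho(r)^2$ on $(0,R]$ is false. The eigenvalue equation is linear in $u$ and its derivatives, whereas $F$ and $\rho^2$ are quadratic; what the eigenvalue property of the trial functions on the ball gives is only the \emph{integral} identity
\[
\int_{\Ostar} F(r)\,dx \;=\; \omega_1(\Ostar)\int_{\Ostar}\rho(r)^2\,dx,
\]
i.e.\ equality in Lemma~\ref{lemmaboundRC}. Consequently your proposed route of bounding $F(r)\leq \omega_1(\Ostar)\rho(r)^2$ pointwise cannot work, and the monotonicity you isolate ``on $r>R$'' is not enough.

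What the paper actually does (and what you must do) is prove monotonicity on \emph{both} sides of $r=R$, separately for the numerator and denominator, and then apply rearrangement to each: $\rho^2$ is shown to be strictly increasing on all of $[0,\infty)$, while the numerator integrand
\[
N[\rho]=(\rho'')^2+\frac{3(d-1)}{r^4}(\rho-r\rho')^2+\tau(\rho')^2+\frac{\tau(d-1)}{r^2}\rho^2
\]
is shown to satisfy a \emph{partial monotonicity} condition for the ball (values inside exceed values outside). This yields $\int_\Omega N[\rho]\,dx\leq\int_{\Ostar}N[\rho]\,dx$ and $\int_\Omega\rho^2\,dx\geq\int_{\Ostar}\rho^2\,dx$, hence the quotient inequality. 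The hard analytic work---and this is where the Bessel identities you allude to really enter---is the partial monotonicity of $N[\rho]$ \emph{inside} the ball: one needs $\rho''<0$ on $(0,R)$ (a convexity argument using $j_1^{(4)}>0$) and a delicate sign analysis of $\frac{6}{r^2}(\rho-r\rho')+3\rho''+\tau\rho$, which in the paper splits into separate ``large $\tau$'' and ``small $\tau$'' regimes and ultimately reduces to positivity of explicit polynomials. Your proposal underestimates this step by locating the difficulty only beyond $r=R$.
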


\begin{figure}
\begin{center}
\includegraphics{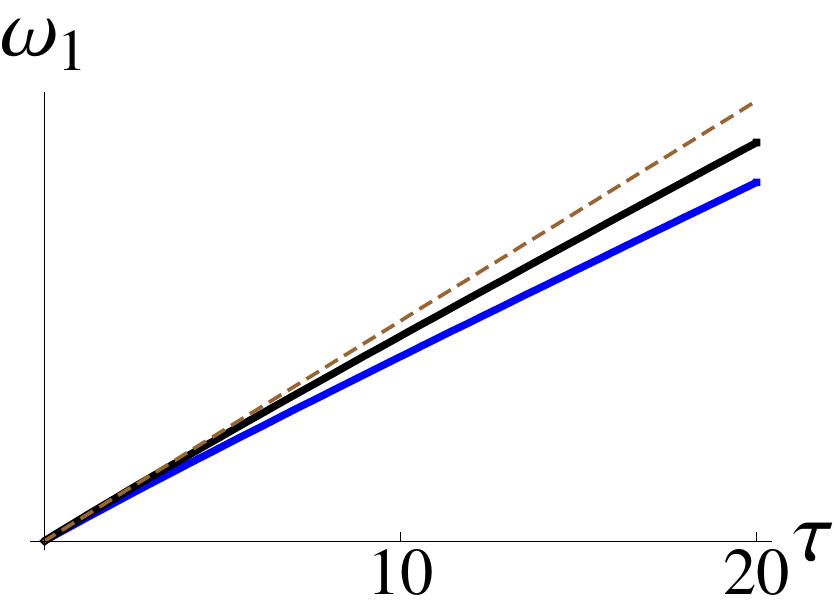}
\caption{The fundamental tone $\omega_1(\Ostar)$ of the disk (middle curve) and of another region $\omega_1(\Omega)$ (bottom curve). The dashed straight line is tangent to $\omega_1(\Ostar)$ at $\tau=0$.}
\end{center}
\end{figure}

In the limiting case $\tau=0$, the first $d+1$ eigenvalues of $\Omega$ are trivial because $Q[u]=0$ for all linear functions $u$. Thus we need the tension parameter $\tau$ to be positive to get a nontrivial conjecture.

The remainder of this chapter consists of a summary of the dissertation and then a brief history of related problems.

We examine the behavior of the spectrum in Chapter~\ref{spectrum}. In particular, we prove the spectrum is comprised only of eigenvalues of finite multiplicity, with an associated complete eigenbasis. We obtain some bounds on the fundamental tone as a function of tension $\tau$ in Chapter~\ref{fundmodtens}, where we also examine the fundamental tone in the extreme cases of infinite tension and zero tension. In addition, we derive the natural boundary conditions from the Rayleigh quotient.

A discussion of ultraspherical Bessel functions appears in Chapter~\ref{bessel}, along with a collection of recurrence relations and facts about them that we will need to prove our main theorem. We put these facts to immediate use in Chapter~\ref{unitball}, where we find the general form of eigenfunctions for the ball and establish the angular dependence of its fundamental mode, in Theorem~\ref{thm2}. Chapter~\ref{mainthm} presents the proof of our main result, Theorem~\ref{thm1}.

In Chapter~\ref{1dim}, we discuss the one-dimensional analogue of the free plate problem. Although there is no isoperimetric inequality in one dimension, we can prove an analogue of Theorem~\ref{thm2}, that the fundamental mode of the free rod is an odd function about its midpoint. Also discussed are the eigenfunctions of the free rod under compression ($\tau<0$). Building on that approach, In Chapter~\ref{furtherdirec}, possible future directions and generalizations of the main problem are discussed. Finally, in the Appendix, we gather some calculus facts used in prior chapters.

\subsection*{Brief history of isoperimetric problems}
Isoperimetric problems for eigenvalues of the Laplacian have fascinated researchers for quite some time \cite{Asummary,AB92,AL96,H06,Kawohlbook,Kesavan,Yau93}. In 1877, Lord Rayleigh \cite{RToS} conjectured an isoperimetric inequality for the first eigenvalue of a fixed membrane:
\[
\lambda_1(\Omega) \geq \lambda_1(\Ostar) \qquad\text{with equality if and only if $\Omega$ is a ball.}
\]
Here $\lambda_j$ is the $j$th eigenvalue satisfying the membrane equation $-\Delta u=\lambda u$ together with the boundary condition $u=0$ on $\partial\Omega$.
The above inequality was later proven by Faber \cite{Faber53} and Krahn \cite{krahn25,krahn26} and now bears their names. Kornhauser and Stakgold \cite{KS52} conjectured in 1952 the opposite result for the free membrane problem, $-\Delta u = \mu u$ with $\partial u/\partial n = 0$ on $\partial \Omega$. In this case, the eigenvalues are $0=\mu_0\leq \mu_1\leq\mu_2\dots$. The lowest eigenvalue $\mu_0=0$ corresponds to a constant eigenfunction. This mode does not vibrate; thus we call the next eigenvalue, $\mu_1$, the fundamental tone. The Kornhauser-Stakgold conjecture sought to maximize the fundamental tone of the free membrane:
\[
\mu_1(\Omega) \leq \mu_1(\Ostar)\qquad\text{with equality if and only if $\Omega$ is a ball.}
\]
This was proven by Szeg\H o \cite{S50,Serrata} and Weinberger \cite{W56}. Szeg\H o's proof uses conformal mapping and is only valid in two dimensions for simply connected regions. Weinberger's approach works for arbitrary domains in all dimensions. Our proof for Theorem~\ref{thm1} is inspired by Weinberger's approach, namely using trial functions and demonstrating monotonicity properties of the resulting quotient of integrals.

The bi-Laplacian operator appears in plate problems just as the Laplacian underpins membrane problems. The theory of the bi-Laplacian is not nearly so well developed as that of the Laplacian. For example, solvability of the biharmonic equation in Lipschitz domains with Neumann boundary conditions was established only a few years ago \cite{verchota}. Furthermore, the maximum principle fails for the bi-Laplacian, even in one dimension. Even so, isoperimetric problems for plate eigenvalues have long been under consideration. Lord Rayleigh \cite{RToS} conjectured an isoperimetric inequality for the clamped plate with zero tension, $\Delta^2u = \alpha u$, where ``clamped" refers to the boundary conditions $u=\partial u/\partial n = 0$ on $\partial \Omega$. This isoperimetric inequality (like that of the fixed membrane) gives a lower bound on the first eigenvalue:
\[
 \alpha_1(\Omega)\geq\alpha_1(\Ostar).
\]
 Work towards this result was begun by Szeg\H o \cite{S50} and continued by Talenti \cite{T81} (see also Mohr \cite{M75}). Nadirashvili \cite{N92,N95} proved the conjecture in two dimensions based on Talenti's work on rearrangement of elliptic partial differential equations. Ashbaugh and Benguria later extended the proof to three dimensions \cite{AB95}. The problem remains open for dimensions four and higher, with a partial result by Ashbaugh and Laugesen \cite{AL96}, and is open in all dimensions for $\tau>0$. For an overview of work on the clamped plate problem, see \cite[Chapter 11, p.\ 169--174]{H06} and \cite[p.\ 105--116]{Kesavan}

There are other boundary conditions for the plate besides the natural and clamped conditions discussed so far. The simply supported plate is governed by $\Delta^2-\tau\Delta u=\beta u$ with the requirement that $u=0$ on $\partial\Omega$; from here $Mu=0$ arises as a natural boundary condition. It is natural to conjecture an isoperimetric inequality for this problem too, but no work seems to have been done on it.

There is also a body of work on plate problems that does not focus on isoperimetric inequalities. General vibrating plate eigenvalue problems are discussed with experimental data by Leissa in \cite{L69}, including approximate solutions for the free rectangular plate. The buckling eigenvalues of a clamped plate have been considered by Payne \cite{P55,P61}. Supported plate work includes Payne \cite{P58} and Licar and Warner \cite{LW73}, who examine domain dependence of plate eigenvalues. The free plate without tension is considered in the same papers. Free plate work without tension also includes Nakata and Fujita, who establish upper and lower bounds on free plate eigenvalues in \cite{NF55}. Payne considered the buckling problem for the clamped plate (\cite{P55,P58}, and in conjunction with Weinberger \cite{PW57}). Kawohl, Levine, and Velte \cite{KLV} considered the clamped plate under tension and compression; they viewed the eigenvalues as functions of the tension or compression parameter and established upper and lower bounds in terms of these parameters. Analogous results for free plates are in Chapter~\ref{fundmodtens} of this dissertation.

Isoperimetric inequalities for the Laplacian and related problems were first considered more than 130 years ago, and research in this field continues today. A number of problems regarding eigenvalues of the bi-Laplacian remain open, and I hope this dissertation helps provide insight for future research on plate problems.

\chapter{The spectrum}\label{spectrum}
Our first task is to investigate the spectrum of the fourth-order operator associated with our Rayleigh quotient $Q$ in \eqref{RQ}. In this chapter we show there is only discrete spectrum, with an associated weak eigenbasis. We will then establish regularity of the eigenfunctions up to the boundary and derive the natural boundary conditions.

In this chapter we will allow $\tau$ to be any real number. We require $\Omega\subset\RR^d$ to be smoothly bounded.

\section*{The existence of the spectrum}
We consider the sesquilinear form
\begin{align*}
a(u,v) &=\int_\Omega\sum_{i,j=1}^d\overline{u_{x_ix_j}}v_{x_ix_j}+\tau(\overline{D u}\cdot D v)\,dx
\end{align*}
in $L^2(\Omega)$ with form domain $H^2(\Omega)$. Note the plate Rayleigh quotient $Q$ can be written in terms of $a$, with $Q[u]=a(u,u)/\|u\|_{L^2}^2$.

\begin{prop} \label{spect}The spectrum of the operator $A$ associated with the form $a(\cdot,\cdot)$ above consists entirely of isolated eigenvalues of finite multiplicity $\omega_0\leq\omega_1\leq\omega_2\leq\dots\to\infty$. There exists an associated set of real-valued weak eigenfunctions which is an orthonormal basis for $L^2(\Omega)$.
\end{prop}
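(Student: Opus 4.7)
The plan is to realize $a(\cdot,\cdot)$ as the form of a self-adjoint operator with compact resolvent, so that the spectral theorem for compact self-adjoint operators gives everything we want. In outline: shift by a constant to make the form coercive, apply the Friedrichs representation theorem on $H^2(\Omega)$, invoke Rellich--Kondrachov for compactness of the embedding $H^2(\Omega)\hookrightarrow L^2(\Omega)$, and finally pass to real-valued eigenfunctions using the reality of $a$.

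The first and most substantive step is coercivity. I want a constant $\kappa \geq 0$ (depending on $\tau$) and $c > 0$ such that
\[
a(u,u) + \kappa\,\|u\|_{L^2}^2 \;\geq\; c\,\|u\|_{H^2(\Omega)}^2 \qquad\text{for all } u\in H^2(\Omega).
\]
When $\tau\geq 0$ this is immediate up to the interpolation inequality
\[
\|Du\|_{L^2}^2 \;\leq\; \epsilon\,\|D^2u\|_{L^2}^2 + C_\epsilon\,\|u\|_{L^2}^2,
\]
which holds on any smoothly bounded domain (Ehrling's lemma, applied to the compact inclusion $H^2\hookrightarrow\hookrightarrow H^1$). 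When $\tau<0$, the same interpolation lets me absorb the negative gradient term into the Hessian term plus a large multiple of the $L^2$ norm: choose $\epsilon$ with $|\tau|\epsilon<1/2$ and then take $\kappa$ as large as needed. This is the step I expect to be trickiest to write cleanly, since the interpolation inequality must be cited or proved on $\Omega$ and the bookkeeping with $\tau$'s sign must be correct.

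Once coercivity is in hand, $a_\kappa(u,v):=a(u,v)+\kappa(u,v)_{L^2}$ is a continuous, symmetric, coercive, densely defined sesquilinear form on $H^2(\Omega)\subset L^2(\Omega)$. The Friedrichs (form) representation theorem produces a self-adjoint nonnegative operator $B$ on $L^2(\Omega)$ with $D(B^{1/2})=H^2(\Omega)$ and $a_\kappa(u,v)=(Bu,v)_{L^2}$ for $u\in D(B)$, $v\in H^2(\Omega)$. Define $A:=B-\kappa I$; then $A$ is self-adjoint and semibounded, and $a(u,v)=(Au,v)_{L^2}$ on $D(A)=D(B)$.

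For discreteness of the spectrum, I would show that the resolvent $B^{-1}:L^2(\Omega)\to L^2(\Omega)$ is compact. Coercivity gives $\|B^{-1}f\|_{H^2}\leq c^{-1}\|f\|_{L^2}$ (from $a_\kappa(B^{-1}f,B^{-1}f)=(f,B^{-1}f)$ combined with Cauchy--Schwarz), and then Rellich--Kondrachov yields compactness of $B^{-1}:L^2\to L^2$, since $\Omega$ is bounded and smooth. The spectral theorem for compact self-adjoint operators then provides a sequence of nonzero eigenvalues of $B^{-1}$ tending to $0$, with finite multiplicities and with a complete orthonormal system of eigenfunctions in $L^2(\Omega)$. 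Translating back through $A=B-\kappa I$ gives the claimed sequence $\omega_0\leq\omega_1\leq\cdots\to\infty$. Finally, because $a$ has real coefficients, if $u$ is a complex eigenfunction then $\Re u$ and $\Im u$ satisfy the same weak eigenvalue equation, and re-orthogonalizing within each (finite-dimensional) eigenspace gives a real orthonormal eigenbasis for $L^2(\Omega)$.
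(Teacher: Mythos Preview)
Your proposal is correct and follows essentially the same approach as the paper: coercivity of the shifted form (using an interpolation inequality to handle $\tau\leq 0$), compact embedding $H^2(\Omega)\hookrightarrow L^2(\Omega)$ via Rellich--Kondrachov, and then standard spectral theory to conclude. The only difference is packaging: the paper cites a single black-box result (Showalter) for the spectral conclusion, while you spell out the Friedrichs extension and compact-resolvent argument explicitly; also, for $\tau>0$ the paper observes coercivity directly without interpolation, whereas you route that case through Ehrling as well (which is harmless but unnecessary).
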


\begin{proof}
By Cauchy-Schwarz, the form $a(\cdot,\cdot)$ is bounded, and therefore continuous, on $H^2(\Omega)$. We will show the quadratic form $a(u,u)$ is coercive; that is, for some positive constants $c_1$ and $c_2$, we have $a(u,u)+c_1\|u\|^2\geq c_2\|u\|^2_{H^2(\Omega)}$. By the boundedness of $a$ on $H^2$, this is equivalent to showing that the norm
\[
 \|u\|_a^2=a(u,u)+c_1\|u\|^2,
\]
is equivalent to $\|\cdot\|^2_{H^2(\Omega)}$, and hence $a$ is a closed quadratic form on $H^2(\Omega)$. 

Once we have coercivity and show that $H^2(\Omega)$ is compactly embedded in $L^2(\Omega)$, we conclude by a standard result (see e.g., Corollary 7.D \cite[p. 78]{SH77}) that the form $a$ has a set of weak eigenfunctions which is an orthonormal basis for $L^2(\Omega)$, and the corresponding eigenvalues are of finite multiplicity and satisfy
\begin{equation}\label{eigenvalueineq}
 \omega_0\leq\omega_1\leq \dots\leq\omega_n\rightarrow\infty\quad\text{as}\quad n\rightarrow\infty.
\end{equation}

We first show that $H^2(\Omega)$ is compactly embedded in $L^2(\Omega)$. Let $B_{R}$ be a ball of radius $R$ centered such that $\Omega\subset B_R$; thus we have $H^2(\Omega)\subset L^2(\Omega)\subset L^2(B_R)$ and can identify $L^2(\Omega)$ with a closed subspace of $L^2(B_R)$. Because $\Omega$ is smoothly bounded, we can extend all functions in $H^2(\Omega)$ to functions in $H^2_o(B_{2R})$. The extension map is linear and bounded, so $H^2(\Omega)$ can be identified with a closed subset of $H^2_o(B_{2R})$.  The space $H^2_o(B_{2R})$ is compactly embedded in $L^2(B_R)$ by the Rellich-Kondrachov Theorem; thus because the space $H^2(\Omega)$ is a closed subset of $H^2_o(B_{2R})$, we have that $H^2(\Omega)$ also compactly embedded in $L^2(B_R)$. Since $L^2(\Omega)$ is a closed subspace of $L^2(B_R)$ containing $H^2(\Omega)$, we have that $H^2(\Omega)$ is a compact subspace of $L^2(\Omega)$. (See, e.g., \cite{adams} for extension theorems and the Rellich-Kondrachov Theorem.)

We next show coercivity of the form $a$. For $\tau>0$, coercivity is easily proved:
\begin{align*}
a(u,u) + \tau \|u\|^2&\geq \|D^2u\|^2+\tau\|D u\|^2 +\tau \|u\|^2\\
&\ge \min(\tau,1)\|u\|_{H^2}^2,
\end{align*}
where all unlabeled norms are $L^2$ norms on $\Omega$. 

To prove coercivity when $\tau \leq 0$, we must somehow arrive at a positive constant in front of the $|Du|^2$ term. We cannot use Poincar\'e's inequality on the $|D^2u|$ term as this will introduce terms involving the average value of $Du$. Instead, we will exploit an interpolation inequality.

By Theorem 7.28 of \cite[p. 173]{GT}, we have that for any index $1\leq j\leq n$ and any $\epsilon>0$,
\begin{equation}
\|\partial_{x_j}u\|_{L^2(\Omega)}^2 \leq \epsilon\|u\|_{H^2(\Omega)}^2+C\epsilon^{-1}\|u\|_{L^2(\Omega)}^2
\end{equation}
with $C=C(\Omega)$ a constant. Replacing $\epsilon$ by $\epsilon/d$ and summing over $j$, we see
\begin{equation*}
\|D^2u\|_{L^2}^2 \geq \left(\frac{1}{\epsilon}-1\right)\|Du\|_{L^2}^2
-\left(\frac{C}{\epsilon^2}+1\right)\|u\|_{L^2}^2.
\end{equation*}
Fix $\delta\in(0,1)$. Let $K > 0$. Then
\begin{align*}
a(u,u)&+K\|u\|_{L^2}^2= \|D^2u\|_{L^2}^2-|\tau|\|Du\|^2_{L^2}+K\|u\|_{L^2}^2\\
&\geq (1-\delta)\|D^2u\|_{L^2}^2+\left(\frac{\delta}{\epsilon}-\delta-|\tau|\right)\|Du\|^2_{L^2}+\left(K-\frac{C\delta}{\epsilon^2}-\delta\right)\|u\|_{L^2}^2\\
&\geq \min\left\{1-\delta,\frac{\delta}{\epsilon}-\delta-|\tau|,K-\frac{C\delta}{\epsilon^2}-\delta\right\}\|u\|_{H^2},
\end{align*}
We can choose our $\epsilon$ small and our $K$ large so that the minimum is positive, which proves coercivity. For example, for $\delta=1/2$, we need to take \\$\epsilon<1/(1+2|\tau|)$ and $K>\frac{1}{2}\Big(C+1+2|\tau|\Big)$. 

We now have that the form $a$ is coercive for all $\tau\in\RR$. Now suppose $u$ is a weak eigenfunction corresponding to eigenvalue $\omega$. Because $\omega$ and $\tau$ are real-valued, by taking the complex conjugate of the weak eigenvalue equation we see that $\ubar$ is also a weak eigenfunction with the same eigenvalue. Thus the real and imaginary parts of $u$ are both eigenfunctions associated with $\omega$, and we may choose our eigenfunctions to be real-valued.
\end{proof}

Note that for any bounded region $\Omega$ and all real values of $\tau$, the constant function solves the weak eigenvalue equation with eigenvalue zero. For all nonnegative values of $\tau$, the Rayleigh quotient is nonnegative for all functions and so $0=w_0\leq w_1\leq\cdots$.  When $\tau=0$, the coordinate functions $x_1,\dots,x_d$ are also solutions with eigenvalue zero, and so the lowest eigenvalue is at least $d+1$-fold degenerate, as noted in the introduction. Taking instead $\tau>0$, the Raleigh quotient shows that the fundamental tone $\omega_1$ is positive, and so we have:
\[
 0=\omega_0<\omega_1\leq\omega_2\leq\cdots\leq\omega_n\rightarrow\infty\quad\text{as}\quad n\rightarrow\infty.
\]

\section*{Regularity}
We aim to establish regularity of the weak eigenfunctions by appealing to interior and boundary regularity theory for elliptic operators. 

\begin{prop} \label{regprop} For any $\tau \in\RR$ and smoothly bounded $\Omega$, the weak eigenfunctions of the operator $A$ are smooth on $\overline{\Omega}$. 
\end{prop}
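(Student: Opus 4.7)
The plan is to use elliptic regularity in two stages: first interior smoothness from the weak equation alone, then boundary smoothness by verifying that the natural boundary conditions implicit in the weak formulation satisfy the complementing (Shapiro--Lopatinskii) condition relative to the fourth-order elliptic operator $\Delta^2-\tau\Delta$. Throughout, the smoothness of $\partial\Omega$ assumed in the proposition is used to straighten the boundary and to apply standard extension theorems.

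For the interior, I would restrict the weak identity $a(u,v)=\omega\int_\Omega uv\,dx$ to test functions $v\in C_c^\infty(\Omega)$; two integrations by parts then show that $u$ is a distributional solution of
\[
\Delta^2 u-\tau\Delta u-\omega u=0\qquad\text{in }\Omega.
\]
Since the principal part $\Delta^2$ is uniformly elliptic with constant coefficients, a standard difference-quotient bootstrap raises $u$ from $H^2(\Omega)$ to $H^s_{\mathrm{loc}}(\Omega)$ for every $s$, whence $u\in C^\infty(\Omega)$ by Sobolev embedding.

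For regularity up to the boundary I would use the weak identity with arbitrary $v\in H^2(\Omega)$. Integrating by parts formally produces, in addition to the bulk equation, two boundary terms whose vanishing for all admissible $v$ forces the natural conditions $Mu=0$ and $Vu=0$ described in Chapter~1. The Agmon--Douglis--Nirenberg theory for elliptic boundary value problems then applies provided the pair $(M,V)$ covers $\Delta^2-\tau\Delta$. After flattening the boundary and freezing coefficients, one checks at the symbol level that the principal parts of $M$ and $V$ reduce to $\partial_n^2$ and $-\partial_n\Delta$, and that these two functionals are linearly independent on the two-dimensional space of exponentially decaying solutions of $(|\xi'|^2-\partial_n^2)^2 w=0$; this is a short algebraic calculation. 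The resulting a priori estimate
\[
\|u\|_{H^{s+4}(\Omega)}\le C\bigl(\|\omega u\|_{H^s(\Omega)}+\|u\|_{L^2(\Omega)}\bigr),
\]
applied iteratively from $u\in H^2(\Omega)$, yields $u\in H^s(\Omega)$ for all $s$, and Sobolev embedding then gives $u\in C^\infty(\overline{\Omega})$.

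The main obstacle is the boundary step. One has to justify the integration by parts that identifies $(M,V)$ as the natural conditions (which requires enough regularity of $u$ near $\partial\Omega$ to give meaning to the normal and tangential traces of $D^2u$), and then verify the complementing condition for the complicated operator $V=\tau\,\partial u/\partial n-\sdiv(\sproj[(D^2u)n])-\partial(\Delta u)/\partial n$, which contains a surface-divergence term. Fortunately only the principal part of $V$ enters the symbol calculation, reducing the verification to the simple check described above; the bookkeeping of tangential derivatives in the lower-order pieces is technical but harmless.
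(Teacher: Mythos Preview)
Your approach is correct in outline but different from the paper's, which is far more terse. The paper simply invokes a regularity theorem of Nirenberg (1955) for weak solutions of coercive variational problems: because $u\in H^2(\Omega)$ satisfies $a(u,v)=\omega(u,v)_{L^2}$ for all $v\in H^2(\Omega)$ with $a$ coercive, Nirenberg's result gives $u\in H^k(\Omega)$ for every $k$ directly, and Sobolev embedding finishes. The point is that Nirenberg's method works at the level of the bilinear form using tangential difference quotients, so one never needs to extract the boundary operators $(M,V)$ beforehand; this sidesteps exactly the circularity you flag as ``the main obstacle.'' Your Agmon--Douglis--Nirenberg route is a legitimate alternative and has the virtue of being self-contained, but it is more labor: you must either first establish enough trace regularity to make sense of $Mu$ and $Vu$, or else recast the weak problem in the ADN variational framework where the complementing condition for natural boundary conditions is known to hold automatically for coercive forms.

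One small correction: in flat coordinates the principal part of $V$ is $-\partial_n^3 u-2\Delta_{\mathrm{tan}}\partial_n u$, not $-\partial_n\Delta u=-\partial_n^3 u-\Delta_{\mathrm{tan}}\partial_n u$; the surface-divergence term contributes an extra $-\Delta_{\mathrm{tan}}\partial_n u$. Fortunately this does not spoil the complementing check: on the decaying solutions $e^{-|\xi'|x_n}$ and $x_n e^{-|\xi'|x_n}$ the boundary map $(M_0,V_0)$ still has nonvanishing determinant $-3|\xi'|^4$, so your conclusion stands.
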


\begin{proof}
Let $u$ be a weak eigenfunction of $A$ with associated eigenvalue $\omega$; by Proposition~\ref{spect} we have $u\in\DD(a)=H^2(\Omega)$. Then by a theorem in \cite[p 668]{Nir55}, we have $u\in H^k(\Omega)$ for every positive integer $k$. Thus we have $u\in H^k(\Omega)$ for all $k\in \mathbb{Z}^+$, and so $u\in C^\infty(\Omega)$.

Regularity on the boundary follows from global interior regularity and the Trace Theorem (see, for example, \cite[Prop 4.3, p. 286 and Prop 4.5, p. 287.]{taylor}). Thus we have $u\in C^\infty(\overline{\Omega})$, as desired.
\end{proof}

\section*{The Natural Boundary Conditions}
In this section, our goal is to derive the form of the natural boundary conditions necessarily satisfied by all weak eigenfunctions.

In the case of the free membrane, the weak eigenfunctions $u$ are smooth on $\overline{\Omega}$ and satisfy
\[
 \int_\Omega Du\cdot D\phi\,dx- \int_\Omega \mu u\phi\,dx=0
\]
for all $\phi\in H^1(\Omega)$. By the Divergence theorem, we obtain
\begin{equation}\label{ncase}
 \surfint \phi \frac{\partial u}{\partial n}\,dS-\int_\Omega \phi(\Delta u+\mu u)\,dx=0.
\end{equation}
Because $\phi\in H^1(\Omega)$ is arbitrary, we may consider those $\phi$ with compact support in $\Omega$. Then the boundary terms vanish, and in order for the remaining integral to be zero for all such $\phi$, we must have $\Delta u +\mu u=0$ almost everywhere on $\Omega$. Hence equation \eqref{ncase} says
\[
 \surfint \phi \frac{\partial u}{\partial n}\,dS=0
\]
 In order for this surface integral to be zero we find must have $\partial u/\partial n=0$ on $\partial\Omega$. Thus $u$ satisfies the eigenvalue equation $-\Delta u =\mu u$ on $\Omega$ and the Neumann boundary condition
\[
 \frac{\partial u}{\partial n}=0 \qquad\text{on $\partial\Omega$.}
\]
We will use the same approach to derive the natural boundary conditions for the free plate. The natural boundary conditions are rather complicated in higher dimensions, and so we state the two-dimensional case first. The boundary conditions in this case have been known for some time: see, for example, \cite{RW74}
\begin{prop} \label{2dimBC}(Two dimensions)
For $\Omega\subset\RR^2$, the natural boundary conditions for eigenfunctions of the free plate under tension have the form
\begin{align*}
&Mu := \frac{\partial^2 u}{\partial n^2} = 0 \\
&Vu :=\tau \frac{\partial u}{\partial n}-\frac{\partial (\Delta u)}{\partial n} - \frac{\partial }{\partial s} \left(\frac{\partial^2u}{\partial s\partial n}-K(s)\frac{\partial u}{\partial s}\right)= 0
\end{align*}
where $n$ denotes the outward unit normal derivative, $s$ the arclength, and $K$ the curvature of $\partial\Omega$.
\end{prop}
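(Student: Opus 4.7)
The plan is to mimic the membrane derivation: start from the weak eigenvalue identity $a(u,\phi)=\omega\int_\Omega u\phi\,dx$, valid for all $\phi\in H^2(\Omega)$, and integrate by parts repeatedly to transfer every derivative off of $\phi$. First I would take $\phi\in C_c^\infty(\Omega)$; two applications of the divergence theorem on $\int_\Omega\sum u_{x_ix_j}\phi_{x_ix_j}\,dx$ (which is legal since $u\in C^\infty(\overline\Omega)$ by Proposition~\ref{regprop}) plus one application on the tension term yield $\int_\Omega\phi(\Delta\Delta u-\tau\Delta u-\omega u)\,dx=0$ for every compactly supported $\phi$, hence the strong PDE \eqref{maineq}.

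Next I would redo the integrations by parts without the support restriction, keeping all boundary terms. The result has the form
\[
\int_{\partial\Omega}\phi\left(\tau\frac{\partial u}{\partial n}-\frac{\partial(\Delta u)}{\partial n}\right)dS+\int_{\partial\Omega}\sum_{i,j}u_{x_ix_j}n_i\phi_{x_j}\,dS=0
\]
for every $\phi\in H^2(\Omega)$. The second integrand couples $u$ to the full gradient of $\phi$ on the boundary, so I would split $D\phi=\frac{\partial\phi}{\partial n}n+\frac{\partial\phi}{\partial s}t$ along $\partial\Omega$. The normal piece produces a $\frac{\partial^2u}{\partial n^2}\frac{\partial\phi}{\partial n}$ term; the tangential piece produces $((D^2u)n\cdot t)\frac{\partial\phi}{\partial s}$ which I would rewrite using the identity $(D^2u)n\cdot t=\frac{\partial^2u}{\partial s\partial n}-K\frac{\partial u}{\partial s}$, obtained by differentiating $\partial u/\partial n=n\cdot Du$ along the boundary and invoking the planar Frenet formula $\partial_s n=K\,t$.

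At this point I would integrate by parts in $s$ on the tangential piece; since $\partial\Omega$ is a closed curve there are no endpoint contributions, and the identity becomes
\[
\int_{\partial\Omega}\phi\,Vu\,dS+\int_{\partial\Omega}\frac{\partial\phi}{\partial n}\,Mu\,dS=0\qquad\text{for all }\phi\in H^2(\Omega),
\]
with $Mu$ and $Vu$ as claimed. Finally, I would pick test functions to isolate each boundary condition: choosing $\phi$ with $\phi|_{\partial\Omega}=0$ and arbitrary $\partial\phi/\partial n|_{\partial\Omega}$ forces $Mu\equiv0$ on $\partial\Omega$, and then choosing $\phi$ with arbitrary boundary trace forces $Vu\equiv0$. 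Such $\phi$ exist in $H^2(\Omega)$ by the surjectivity of the trace map $H^2(\Omega)\to H^{3/2}(\partial\Omega)\times H^{1/2}(\partial\Omega)$ on a smooth domain.

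The main obstacle is the tangential boundary calculation: the raw expression $\sum u_{x_ix_j}n_i\phi_{x_j}$ is Cartesian, while $M u$ and $Vu$ are written intrinsically on $\partial\Omega$, and converting between the two requires care with the sign convention for $K$ and correct use of the Frenet relations to pick up the right $-K\,\partial u/\partial s$ curvature correction. Everything else is routine integration by parts, so the proposition reduces to checking this one identity carefully.
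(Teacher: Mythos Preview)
Your proposal is correct and follows the same overall arc as the paper---integrate by parts in the weak identity, split the boundary gradient of $\phi$ into normal and tangential parts, integrate the tangential piece by parts along $\partial\Omega$, then vary $\phi$ and $\partial\phi/\partial n$ independently. The one substantive difference is how you obtain the key identity $t^{\TT}(D^2u)n = \partial_s(\partial_n u) - K\,\partial_s u$. The paper first proves the general $d$-dimensional result (Proposition~\ref{generalBC}) and then specializes by writing $t=(\cos\alpha,\sin\alpha)$, $n=(\sin\alpha,-\cos\alpha)$, expanding $t^{\TT}(D^2u)n$ in Cartesian second partials, and substituting the explicit change-of-coordinate formulas for $u_{xx},u_{yy},u_{xy}$ in terms of $u_{nn},u_{ss},u_{ns}$ taken from Weinstock~\cite{RW74}. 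You instead differentiate $\partial_n u = n\cdot Du$ along the curve and invoke the Frenet relation $\partial_s n = K t$ directly. Your route is shorter and more intrinsic; the paper's route is more computational but has the advantage that it falls out of the already-proved general case with no new ideas. Both land on the same formula, and your caution about sign conventions is well placed: with the paper's choice $t=(\cos\alpha,\sin\alpha)$, $n=(\sin\alpha,-\cos\alpha)$, $K=\alpha'$, one indeed has $\partial_s n = K t$, which gives exactly the sign you want.
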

We also look at one example of the natural boundary conditions for a region with corners. Notice that an additional condition arises at the corners!
\begin{prop} (Rectangular region in two dimensions) \label{rectBC}
When $\Omega\subset\RR^2$ is a rectangular region with edges parallel to the coordinate axes, the natural boundary conditions for eigenfunctions of the free plate under tension have the form
\begin{align*}
&\frac{\partial^2 u}{\partial n^2} = 0 \quad\text{at each edge}\\
&\tau \frac{\partial u}{\partial n}-\frac{\partial^3 u}{\partial s^2\partial n}-\frac{\partial (\Delta u)}{\partial n}=0\qquad\text{on each edge}\\
&u_{xy}=0 \qquad\text{at each corner}
\end{align*}
where $n$ and $s$ indicate the normal and tangent directions.
\end{prop}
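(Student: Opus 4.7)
The plan mirrors the derivation of the natural boundary conditions in the smooth case (Proposition~\ref{2dimBC}), modified to accommodate the four corner points of the rectangle. I would begin from the weak eigenvalue equation, which asserts that
\[
\int_\Omega \sum_{i,j}\overline{u_{x_ix_j}}\phi_{x_ix_j} + \tau\,\overline{Du}\cdot D\phi\,dx = \omega \int_\Omega \overline{u}\phi\,dx
\]
for every $\phi\in H^2(\Omega)$. Taking $\phi$ compactly supported in $\Omega$ recovers the interior PDE $\Delta^2u - \tau\Delta u = \omega u$ exactly as in the smooth case. After two integrations by parts in the Hessian term and one in the gradient term, what remains is a sum over the four edges:
\[
\sum_{\text{edges}}\int_{\text{edge}}\left[(D^2u)n\cdot D\phi + \left(\tau\frac{\partial u}{\partial n} - \frac{\partial(\Delta u)}{\partial n}\right)\phi\right]dS = 0.
\]

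On each edge, the outward normal and unit tangent align with the coordinate axes, so in particular the curvature $K$ vanishes identically. For concreteness, consider the right edge $\{x=b,\ c\le y\le d\}$ with $n=(1,0)$; here $(D^2u)n\cdot D\phi = u_{xx}\phi_x + u_{xy}\phi_y$. The factor $u_{xx}\phi_x$ couples to the normal derivative of $\phi$, and by localizing $\phi$ near an interior point of the edge with a prescribed $\phi_n$ value there I would deduce $Mu = u_{xx} = 0$ on that edge. The tangential factor $u_{xy}\phi_y$ must be integrated by parts along the edge,
\[
\int_c^d u_{xy}\phi_y\,dy = \bigl[u_{xy}\phi\bigr]_{y=c}^{y=d} - \int_c^d u_{xyy}\phi\,dy,
\]
and combining the new integrand with the lower-order terms reconstructs precisely the edge condition $\tau(\partial u/\partial n) - \partial^3u/(\partial s^2\partial n) - \partial(\Delta u)/\partial n = 0$. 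Repeating the same manipulation on the remaining three edges proves the two stated edge conditions.

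What is left after the edge conditions are imposed are the endpoint values $[u_{xy}\phi]$ contributed by the tangential integrations by parts. Each corner is the endpoint of exactly two adjacent edges and therefore inherits two such contributions; tracking the outward normals and the orientations of the tangents, I would verify that at each corner these two contributions always \emph{add} to $\pm 2u_{xy}\phi$, rather than cancel. Localizing $\phi$ near a single chosen corner (and vanishing on the edges and at the other corners) then forces the stated condition $u_{xy}=0$ at each corner. The main obstacle is not analytical depth but careful sign bookkeeping at the corners: I must confirm that the corner remnants truly combine constructively, which is what distinguishes the rectangular case from the smooth boundary case, where the analogous tangential terms integrate cleanly around the closed boundary and leave no point residues behind.
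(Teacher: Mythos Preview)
Your proposal is correct and follows essentially the same route as the paper's proof: integrate by parts in the Hessian and gradient terms, then integrate the tangential piece $u_{xy}\phi_s$ by parts along each edge, and observe that the endpoint contributions from the two edges meeting at each corner reinforce to $\pm 2u_{xy}\phi$ rather than cancel. The paper carries this out explicitly on the unit square, while you frame it a bit more abstractly with localized test functions, but the argument is the same.
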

Finally, we state the natural boundary conditions for a smoothly-bounded region in higher dimensions:
\begin{prop} (General) \label{generalBC} For any smoothly bounded $\Omega$, the natural boundary conditions for eigenfunctions of the free plate under tension have the form
\begin{align*}
&Mu := \frac{\partial^2 u}{\partial n^2} = 0 &\text{on $\partial\Omega$,}\\
&Vu := \tau\frac{\partial u}{\partial n}
-\sdiv\Big(\sproj\left[(D^2u)n\right]\Big)-\frac{\partial\Delta u}{\partial n} = 0 &\text{on $\partial\Omega$,}
\end{align*}
where $n$ denotes the normal derivative and $\sdiv$ is the surface divergence. The projection $\sproj$ projects a vector $v$ at a point $x$ on $\partial\Omega$ into the tangent space of $\partial\Omega$ at $x$.
\end{prop}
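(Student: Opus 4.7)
The plan is to follow exactly the template used for the free membrane case in the text: start from the weak eigenvalue equation $a(u,\phi)=\omega\int_\Omega u\phi\,dx$ for $\phi\in H^2(\Omega)$, integrate by parts to move all derivatives off $\phi$, test first against $\phi\in C_c^\infty(\Omega)$ to recover the bulk equation $\Delta^2u-\tau\Delta u=\omega u$, and then use the freedom in choosing $\phi$ on the boundary to extract the two natural boundary conditions. Smoothness of $u$ up to $\partial\Omega$ from Proposition~\ref{regprop} ensures every integration by parts is justified.

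For the calculation itself, the $\tau$-term is easy: one application of the divergence theorem gives $\tau\int_\Omega Du\cdot D\phi\,dx=-\tau\int_\Omega(\Delta u)\phi\,dx+\tau\int_{\partial\Omega}(\partial u/\partial n)\phi\,dS$. The Hessian term requires two integrations by parts on $\int_\Omega\sum_{i,j}u_{x_ix_j}\phi_{x_ix_j}\,dx$, first in $x_i$ and then in $x_j$, producing the bulk term $\int_\Omega(\Delta^2 u)\phi\,dx$ together with two surface contributions, namely $\int_{\partial\Omega}[(D^2u)n]\cdot D\phi\,dS$ and $-\int_{\partial\Omega}(\partial\Delta u/\partial n)\phi\,dS$. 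Assembling everything,
\begin{align*}
0 &= \int_\Omega(\Delta^2u-\tau\Delta u-\omega u)\phi\,dx \\
&\quad + \int_{\partial\Omega}\Bigl(\tau\frac{\partial u}{\partial n}-\frac{\partial\Delta u}{\partial n}\Bigr)\phi\,dS + \int_{\partial\Omega}[(D^2u)n]\cdot D\phi\,dS.
\end{align*}
Testing against $\phi\in C_c^\infty(\Omega)$ kills the surface terms and gives $\Delta^2u-\tau\Delta u=\omega u$ pointwise.

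The main work, and the place where the general-dimension statement becomes delicate, is the last integrand $[(D^2u)n]\cdot D\phi$. I would decompose $D\phi$ on $\partial\Omega$ into its normal and tangential parts, $D\phi=(\partial\phi/\partial n)\,n+\sgrad\phi$. Then $[(D^2u)n]\cdot n\,\partial\phi/\partial n$ gives a term with $Mu=\partial^2u/\partial n^2$ paired against $\partial\phi/\partial n$, while $[(D^2u)n]\cdot\sgrad\phi$ picks up only the tangential component, which is by definition $\sproj[(D^2u)n]$. Since $\partial\Omega$ is a closed manifold without boundary, the surface divergence theorem transfers the tangential derivative and contributes $-\int_{\partial\Omega}\sdiv(\sproj[(D^2u)n])\phi\,dS$ with no boundary of a boundary. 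Collecting everything reduces the surface identity to
\[
\int_{\partial\Omega} Vu\cdot\phi\,dS+\int_{\partial\Omega} Mu\cdot\frac{\partial\phi}{\partial n}\,dS=0.
\]
Finally, because $\phi|_{\partial\Omega}$ and $(\partial\phi/\partial n)|_{\partial\Omega}$ can be prescribed independently as traces of $H^2(\Omega)$-functions, each factor must vanish, giving $Mu=0$ and $Vu=0$ on $\partial\Omega$.

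The expected obstacle is not the bulk integration by parts, which is routine, but the surface-calculus step: one must justify the orthogonal decomposition of $D\phi$ along $\partial\Omega$, identify the tangential piece as $\sproj[(D^2u)n]$, and apply the surface divergence theorem correctly to produce $\sdiv$. Once these identifications are made and the independence of the two boundary traces is invoked, the two conditions $Mu=0$ and $Vu=0$ fall out simultaneously.
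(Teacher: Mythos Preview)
Your proposal is correct and follows essentially the same route as the paper's own proof: integrate the Hessian term by parts twice to obtain the bulk term plus the surface integrals $\int_{\partial\Omega}[(D^2u)n]\cdot D\phi\,dS$ and $-\int_{\partial\Omega}(\partial\Delta u/\partial n)\phi\,dS$, then split $D\phi$ on $\partial\Omega$ into normal and tangential parts and apply the surface divergence theorem to the tangential piece. The paper's argument is identical in structure, differing only in that it states the independence of $\phi$ and $\partial\phi/\partial n$ on the boundary somewhat less explicitly than you do.
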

\begin{proof}[Proof of Proposition~\ref{generalBC}]
Our eigenfunctions $u$ are smooth on $\overline{\Omega}$ by Proposition 2 and satisfy the weak eigenvalue equation $a(u,\phi)-\omega(u,\phi)_{L^2(\Omega)}=0$ for all $\phi\in H^2(\Omega)$. That is,
\[
 \int_\Omega \left(\sum_{i,j=1}^du_{x_ix_j}\phi_{x_ix_j}+\tau D\phi\cdot Du-\omega u\phi\right)\,dx=0.
\]

As in the membrane case, we make much use of integration by parts. Let $n$ denote the outward unit normal to the surface $\partial\Omega$. To simplify our calculations, we consider each term separately.

The gradient term only needs one use of integration by parts:
\[
\int_\Omega Du\cdot D\phi\,dx=\surfint\phi \frac{\partial u}{\partial n}\,dS -\int_\Omega\phi(\Delta u)\,dx.
\]

The Hessian term becomes:
\begin{align*}
\int_\Omega&\sum_{i,j}u_{x_ix_j}\phi_{x_ix_j}\,dx\\
&=\surfint\left(D\phi\cdot\Big((D^2u)n\Big)-\phi\frac{\partial(\Delta u)}{\partial n}\right)\,dS +\int_\Omega(\Delta^2 u)\phi\,dx,
\end{align*}
after integrating by parts twice.

We wish to transform the term involving $D\phi$ in the above surface integral using integration by parts. Because we are on $\partial\Omega$, we must treat the normal and tangential components separately. We can then use the Divergence theorem for integration on $\partial\Omega$.

We note that the surface gradient $\sgrad$ equals $D - n\partial_n$ when applied to a function (like $\phi$) that is defined on a neighborhood of the boundary. Thus $\sgrad \phi$ gives the tangential part of the Euclidean gradient vector. Hence,
\begin{align*}
\surfint &D\phi\cdot\Big((D^2u)n\Big)\,dS \\
&= \surfint \left(n\frac{\partial\phi}{\partial n}+\sgrad\phi\right)\cdot\left(n\frac{\partial^2u}{\partial n^2}+\sproj\left[(D^2u)n\right]\right)\,dS\\
&=\surfint \frac{\partial\phi}{\partial n}\frac{\partial^2u}{\partial n^2} +\Big\langle\sgrad\phi,\sproj\left[(D^2u)n\right]\Big\rangle_{\partial\Omega}\,dS\\
&= \surfint \frac{\partial\phi}{\partial n} \frac{\partial^2u}{\partial n^2}-\phi\,\sdiv\left(\sproj\left[(D^2u)n\right]\right)\,dS,
\end{align*}
by the Divergence Theorem on the surface $\partial\Omega$. Here $\langle\cdot,\cdot\rangle_{\partial\Omega}$ denotes the inner product on the tangent space to $\partial\Omega$. Recall $\sproj$ projects a vector at a point $x$ on $\partial\Omega$ onto the tangent space of $\partial\Omega$ at $x$.

Thus for $u$ an eigenfunction associated with eigenvalue $\omega$, we see
\begin{align*}
0&=\int_\Omega\phi\Big(\Delta^2 u-\tau\Delta u-\omega u\Big)\,dx\\
&\qquad+\surfint \frac{\partial\phi}{\partial n} \frac{\partial^2u}{\partial n^2}+\phi\left(\tau\frac{\partial u}{\partial n}-\frac{\partial\Delta u}{\partial n}-\sdiv\Big(\sproj\left[(D^2u)n\right]\Big) \right)\,dS.
\end{align*}

As in the membrane case, this identity must hold for all $\phi\in H^2(\Omega)$. If we take any compactly supported $\phi$, then the volume integral must vanish; because $\phi$ is arbitrary, we must therefore have $\Delta^2 u-\tau\Delta u-\omega u=0$ everywhere. Similarly, the terms multiplied by $\phi$ and $\partial\phi/\partial n$ must vanish on the boundary. Collecting these results, we obtain the eigenvalue equation \eqref{maineq} and natural boundary conditions of Proposition~\ref{generalBC}.
\end{proof}

\begin{proof}[Proof of Proposition~\ref{2dimBC}]
Here $d=2$; take rectangular coordinates $(x,y)$. We parametrize $\partial\Omega$ by arclength $s$ and define coordinates $(n,s)$, with $n$ the normal distance from $\partial\Omega$, taken to be positive outside $\Omega$. Write $\nhat(s)$ and $\that(s)$ for the outward unit normal and unit tangent vectors to the boundary. Then $\sproj\left[f_1\nhat+f_2\that\right]=f_2\that$ and the operators $\sdiv$ and $\sgrad$ both simply take the derivative with respect to arclength $s$. That is, for a scalar function $f(s)$, and taking $t(s)$ to be the tangent vector to the surface, we have
\[
 \sgrad f(s)=f'(s) \qquad\text{and}\qquad \sdiv(f(s)\that(s))=f'(s).
\]
 and so we may write
\[
 \sdiv\Big(\sproj\left[(D^2u)n\right]\Big)= \frac{\partial}{\partial s}t^\TT(D^2u)n.
\]
The tangent line to $\partial\Omega$ at the point $(0,s)$ in our new coordinates forms an angle $\alpha=\alpha(s)$ with the $x$-axis (see
\cite[p. 230] {RW74}); the curvature of $\partial\Omega$ is given by $K(s)=\alpha^\prime(s)$. Then in rectangular coordinates, the unit tangent vector is $(\cos\alpha,\sin\alpha)$, and the outward unit normal is $(\sin\alpha,-\cos\alpha)$. Thus we have
\[
 \frac{\partial}{\partial s}t^\TT(D^2u)s =\partial_s\Big(\sin\alpha\cos\alpha(u_{xx}-u_{yy})+(\sin^2\alpha-\cos^2\alpha)u_{xy}\Big).
\]
By \cite[p. 233]{RW74}, on $\partial\Omega$ under our change of coordinates, we have
\begin{align*}
 u_{xx}&=u_{nn}\sin^2\alpha+u_{ss}\cos^2\alpha+2u_{ns}\sin\alpha\cos\alpha+Ku_n\cos^2\alpha-2Ku_s\sin\alpha\cos\alpha\\
 u_{yy}&=u_{nn}\cos^2\alpha+u_{ss}\sin^2\alpha-2u_{ns}\sin\alpha\cos\alpha+Ku_n\sin^2\alpha+2Ku_s\sin\alpha\cos\alpha\\
 u_{xy}&=-u_{nn}\cos\alpha\sin\alpha+u_{ss}\cos\alpha\sin\alpha+u_{ns}(\sin^2\alpha-\cos^2\alpha)\\
 &\qquad+Ku_n\cos\alpha\sin\alpha-Ku_s(\sin^2\alpha-\cos^2\alpha).
\end{align*}
So after simplification,
\[
\frac{\partial}{\partial t}[n^T (D^2u)t]=\frac{\partial}{\partial s}\left(\frac{\partial^2u}{\partial s\partial n}-K(s)\frac{\partial u}{\partial s}\right).
\]
This together with the results of Proposition~\ref{generalBC} yields the form of $Vu$ given in Proposition~\ref{2dimBC}. $Mu$ is unchanged, and so this completes the proof.
\end{proof}

\begin{proof}[Proof of Proposition~\ref{rectBC}]
Our previous findings do not completely apply because $\partial\Omega$ has corners, although our argument proceeds similarly. For convenience of notation, we will take $\Omega$ to be the square $[0,1]^2$.

The Hessian term gives us a condition at the corners. In particular, after integrating by parts twice, we have:
\begin{align*}
\int_\Omega &u_{xx}\phi_{xx}+2u_{xy}\phi_{xy}+u_{yy}\phi_{yy}\,dA\\
&=\int_\Omega \phi\Big(u_{xxxx}+2u_{xxyy}+u_{yyyy}\Big)\,dA\\
&\quad+\int_{0}^{1}\Big(u_{xx}\phi_x-u_{xxx}\phi+u_{xy}\phi_y-u_{xyy}\phi\Big)\Big|_{x=0}^{x=1}\,dy\\
&\quad+\int_{0}^{1}\Big(u_{yy}\phi_y-u_{yyy}\phi+u_{xy}\phi_x-u_{xxy}\phi\Big)\Big|_{y=0}^{y=1}\,dx.
\end{align*}
Since
\begin{align*}
 \int_0^1u_{xy}\phi_y\,dy=u_{xy}\phi\Big|_{y=0}^{y=1}-\int_0^1u_{xyy}\phi\,dy
\end{align*}
and 
\begin{align*}
 \int_0^1u_{xy}\phi_x\,dx=u_{xy}\phi\Big|_{x=0}^{x=1}-\int_0^1u_{xxy}\phi\,dx
\end{align*}
we obtain
\begin{align*}
 \int_\Omega &u_{xx}\phi_{xx}+2u_{xy}\phi_{xy}+u_{yy}\phi_{yy}\,dA\\
&=\int_\Omega \phi\Big(u_{xxxx}+2u_{xxyy}+u_{yyyy}\Big)\,dA\\
&\quad+\int_{0}^{1}\Big(u_{xx}\phi_x-\phi(2u_{xyy}+u_{xxx})\Big)\Big|_{x=0}^{x=1}\,dy\\
&\quad+\int_{0}^{1}\Big(u_{yy}\phi_y-\phi(2u_{xxy}+u_{yyy})\Big)\Big|_{y=0}^{y=1}\,dx\\
&\qquad+2u_{xy}\Big|_{x=0}^{x=1}\Big|_{y=0}^{y=1}.
\end{align*}

Because the Divergence Theorem does apply to regions with piecewise-smooth boundaries, the gradient term is the same as in the smooth-boundary case. The final term above is the only term that depends only on the behavior of $u$ and $\phi$ at the corners; arguing as before, we obtain the eigenvalue equation and natural boundary conditions, with the additional condition
\[
0=u_{xy}\phi\Big|_{x=0}^{1}\Big|_{y=0}^{1}.
\]
That is, we must have $u_{xy}=0$ at the corners.\end{proof}

\section*{Example: natural boundary conditions on the ball}
When $\Omega$ is a ball, we can simplify the general boundary conditions.

\begin{prop}\label{ballBC} (Ball) The natural boundary conditions in the case $\Omega=\BB(R)$, the ball of radius $R$, are
\begin{align}
&Mu := u_{rr} = 0 &\text{at $r=R$,}\label{BCb1}\\
&Vu := \tau u_r-\frac{1}{r^2}\Delta_S\left(u_r-\frac{u}{r}\right)-(\Delta u)_r = 0&\text{at $r=R$.}\label{BCb2}
\end{align}
\end{prop}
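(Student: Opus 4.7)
The plan is to specialize Proposition~\ref{generalBC} to $\Omega = \BB(R)$, where the outward unit normal is the radial unit vector $\rhat$ and $\partial/\partial n = \partial/\partial r$. This immediately gives $Mu = u_{rr}$, and the terms $\tau\,\partial u/\partial n$ and $\partial\Delta u/\partial n$ in $Vu$ become $\tau u_r$ and $(\Delta u)_r$. The only substantive work is to identify $\sdiv\bigl(\sproj[(D^2 u)n]\bigr)$ with the angular expression $\frac{1}{r^2}\Ls(u_r - u/r)$ appearing in~\eqref{BCb2}.

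The key observation is that $(D^2 u)n$ has $i$-th component $\sum_j u_{x_ix_j}\rhat_j$, so $(D^2 u)n = \partial_r(Du)$, the radial directional derivative of the gradient. Using the spherical decomposition $Du = u_r\rhat + \frac{1}{r}\Ns u$ (with $\Ns u$ the unit-sphere gradient of $u$, viewed as a tangent vector field on the spheres centered at the origin), together with $\partial_r\rhat = 0$ along radial rays and the commutation $\partial_r\Ns u = \Ns u_r$, I compute
\[
\partial_r(Du) = u_{rr}\rhat + \partial_r\!\left(\frac{1}{r}\Ns u\right) = u_{rr}\rhat - \frac{1}{r^2}\Ns u + \frac{1}{r}\Ns u_r.
\]
The normal component $u_{rr}\rhat$ is annihilated by $\sproj$, and the two remaining terms combine (using that $1/r$ is radial, so $\Ns(1/r) = 0$) into
\[
\sproj[(D^2 u)n] = \frac{1}{r}\Ns\!\left(u_r - \frac{u}{r}\right).
\]

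For the surface divergence, I will invoke the standard scaling between spherical operators: on the sphere of radius $R$, the intrinsic surface gradient of an angular function $g$ is $\frac{1}{R}\Ns g$, and the intrinsic Laplace--Beltrami operator is $\frac{1}{R^2}\Ls g$ (a consequence of the metric on $\partial\BB(R)$ being $R^2$ times the unit-sphere metric). Since $\sproj[(D^2 u)n]$ is already in the form of such a surface gradient, applying $\sdiv$ produces $\frac{1}{r^2}\Ls(u_r - u/r)$ at $r = R$. Substituting into the formula for $Vu$ from Proposition~\ref{generalBC} yields~\eqref{BCb2}.

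The main obstacle I anticipate is not the computation itself but the careful bookkeeping of factors of $1/r$ that distinguish the unit-sphere operators $\Ns,\Ls$ from their counterparts on $\partial\BB(R)$, and justifying the commutation $\partial_r\Ns u = \Ns u_r$. This rests on the fact that at a point $(r,\omega)$, $\Ns u$ lies in the tangent space to the unit sphere at $\omega$ (a fixed subspace of $\mathbb{R}^d$ independent of $r$), so its Cartesian components depend on $r$ only through the scalar function $u(r,\omega)$.
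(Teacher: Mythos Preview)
Your proposal is correct and follows the same overall strategy as the paper: specialize Proposition~\ref{generalBC} to the ball, identify the tangential part of $(D^2u)n$ as the surface gradient of $u_r - u/r$, and then apply $\sdiv\sgrad = \Delta_{\partial\BB(R)} = \frac{1}{R^2}\Ls$. The only difference is in how that identification is made. The paper uses a short Cartesian manipulation: from $\sum_j u_{x_ix_j}x_j = \partial_{x_i}\bigl(\sum_j u_{x_j}x_j - u\bigr)$ it reads off $(D^2u)n = D\bigl(Du\cdot x/R - u/R\bigr)$ as a full Euclidean gradient, so the tangential projection is immediately the surface gradient of $u_r - u/R$. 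You instead recognize $(D^2u)n = \partial_r(Du)$ and differentiate the spherical decomposition $Du = u_r\rhat + \frac{1}{r}\Ns u$ term by term, which requires justifying $\partial_r\rhat = 0$ and the commutation $\partial_r\Ns u = \Ns u_r$. Both computations are short; the paper's avoids the commutation argument at the cost of a slightly less transparent algebraic step, while yours is more overtly geometric.
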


\begin{proof}
When $\Omega$ is a ball, the normal vector to the surface at a point $x$ is $n = x/R$. Then the $i$th component of $(D^2u)n$ is given by
\[
 \sum_{j=1}^du_{x_ix_j}\frac{x_j}{R}
\]
and can be rewritten as
\[
 \frac{1}{R}\frac{\partial}{\partial x_i}\left(\sum_{j=1}^du_{x_j}x_j-u\right).
\]
Therefore,
\[
 (D^2u)n=D\left(Du\cdot\frac{x}{R}-\frac{u}{R}\right).
\]
Then the projection $\sproj$ takes the tangential component of the above gradient vector, and so
\[
 \sproj\left[(D^2u)n\right] =\grad_{\partial\BB(R)}\left(Du\cdot\frac{x}{R}-\frac{u}{R}\right).
\]
We know $\sdiv\sgrad=\Delta_{\partial\Omega}$ by definition. For the ball of radius $R$, we have $\Delta_{\partial\BB(R)}=\frac{1}{R^2}\Delta_S$. The operator $\Delta_S$ is the spherical Laplacian, consisting of the angular part of the Laplacian. It satisfies the identity $\Delta=\frac{\partial^2}{\partial r^2}+\frac{d-1}{r}\frac{\partial}{\partial r}+\frac{1}{r^2}\Delta_S$.

Thus
\begin{align*}
 \sdiv\sproj\left[(D^2u)n\right]&=\Delta_{\partial\BB(R)}\left(Du\cdot\frac{x}{R}-\frac{u}{R}\right)\\
&=\Delta_{\partial\BB(R)}\left(u_r-\frac{u}{R}\right),
\end{align*}
by noting that $Du\cdot n=\partial u/\partial r$.  The boundary conditions of Proposition~\ref{generalBC} then simplify to \eqref{BCb1} and \eqref{BCb2}, as desired.\end{proof}

\chapter{The fundamental tone as a function of tension}\label{fundmodtens}
Fix the smoothly bounded domain $\Omega$. We will estimate how the fundamental tone $\omega_1=\omega_1(\tau)$ depends on the tension parameter $\tau$, for use in the proof of Theorem~\ref{thm1}. We will also study in this chapter the behavior of $\omega_1$ in the extreme cases as $\tau\to 0$ and $\tau\to\infty$. 

First we note that the Rayleigh quotient \eqref{RQ} is linear and increasing as a function of $\tau$. Our eigenvalue $\omega_1(\tau)$ is the infimum of $Q[u]$ over $u\in H^2(\Omega)$ with $\int_\Omega u\,dx=0$, and thus $\omega_1(\tau)$ is itself a concave, increasing function of $\tau$.

Next, we will prove $\omega_1(\tau)/\tau$ is bounded above and below for all $\tau>0$. Recall $\mu_1$ is the fundamental tone of the free membrane.

\begin{lemma}\label{wbounds} For all $\tau\geq 0$ we have
 \begin{equation}\label{lem31}
  \tau\mu_1 \leq\omega_1(\tau)\leq \tau \frac{|\Omega|d}{\int_\Omega |x-\bar{x}|^2 \, dx},
 \end{equation}
where $\bar{x}=\int_\Omega x\,dx/|\Omega|$ is the center of mass of $\Omega$. In particular, when $\Omega$ is the unit ball,
 \begin{equation} 
  \tau\mu_1 \leq \omega_1(\tau) \leq \tau(d+2) .\label{omegabounds}
 \end{equation}
 Furthermore, the upper bounds in \eqref{lem31} and \eqref{omegabounds} hold for all $\tau\in\RR$.
\end{lemma}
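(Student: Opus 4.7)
The plan is to obtain the lower bound by discarding the nonnegative Hessian term in $Q$, and the upper bound by inserting linear trial functions.

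For the lower bound, which requires $\tau\geq 0$, I would simply observe
\[
Q[u] \;\geq\; \tau\,\frac{\int_\Omega |Du|^2\,dx}{\int_\Omega u^2\,dx},
\]
where the right side is $\tau$ times the free-membrane Rayleigh quotient. Taking the infimum of both sides over $u\in H^2(\Omega)$ with $\int_\Omega u\,dx=0$ gives $\omega_1(\tau)$ on the left and at least $\tau\mu_1$ on the right, the latter because $H^2(\Omega)\subset H^1(\Omega)$ so the free-membrane infimum over the smaller class is no smaller than over $H^1$.

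For the upper bound, let $M_{ij}=\int_\Omega(x_i-\bar{x}_i)(x_j-\bar{x}_j)\,dx$, so that $\trace(M)=\int_\Omega|x-\bar{x}|^2\,dx$. For any unit vector $v\in\RR^d$, the linear trial function $u(x)=v\cdot(x-\bar{x})$ has mean zero, vanishing Hessian, and a constant gradient of length one, giving
\[
Q[u] \;=\; \frac{\tau|\Omega|}{v^\TT M v}.
\]
By the variational formula $\omega_1(\tau) = \min\{Q[u]:u\in H^2(\Omega),\int_\Omega u\,dx=0\}$, which is valid for all $\tau\in\RR$, this is an upper bound for $\omega_1(\tau)$. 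Choosing $v$ to be the eigenvector of $M$ maximizing $v^\TT M v$ when $\tau>0$ and minimizing it when $\tau<0$, and using $\lambda_{\max}(M)\geq\trace(M)/d\geq\lambda_{\min}(M)$, in either sign I obtain
\[
\omega_1(\tau) \;\leq\; \frac{\tau|\Omega|d}{\int_\Omega|x-\bar{x}|^2\,dx}.
\]
Specializing to the unit ball, $\bar{x}=0$ and polar coordinates give $\int_\BB|x|^2\,dx = d|\BB|/(d+2)$, reducing the upper bound to $\tau(d+2)$.

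The argument is essentially routine once the right trial functions are identified. The only real subtlety is the sign-dependent choice between the top and bottom eigenvectors of $M$ when extending the upper bound to $\tau<0$; the lower bound, by contrast, relies on dropping a nonnegative term and therefore genuinely needs $\tau\geq 0$.
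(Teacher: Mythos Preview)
Your proof is correct and follows essentially the same strategy as the paper: drop the Hessian term for the lower bound, and insert linear trial functions for the upper bound. The only difference is in bookkeeping for the upper bound. The paper uses the coordinate functions $u_k=x_k-\bar{x}_k$ directly, obtains $\omega_1\int_\Omega(x_k-\bar{x}_k)^2\,dx\leq\tau|\Omega|$ after clearing the (positive) denominator, and then sums over $k$; this one-line trick handles all signs of $\tau$ simultaneously without any eigenvector analysis of $M$. Your route via $\lambda_{\max}(M)$ and $\lambda_{\min}(M)$ is equally valid and in fact yields the slightly sharper intermediate bound $\omega_1(\tau)\leq\tau|\Omega|/\lambda_{\max}(M)$ for $\tau>0$, at the cost of the sign-dependent case split you flagged.
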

These bounds are illustrated in Figure~\ref{Lemma31bounds}.
\begin{proof}
To establish the upper bound, take the coordinate functions as trial functions: $u_k=x_k-\bar{x}_k$, for $k=1,\dots,d$. Note $\int_\Omega u_k\,dx=0$ by definition of center of mass, so the $u_k$ are valid trial functions. All second derivatives of the $u_k$ are zero, so we have
\[
 \omega_1(\tau) \leq Q[u_k] =\frac{\int_\Omega\tau|Du_k|^2\,dx}{\int_\Omega u_k^2\,dx}
	=\tau\frac{\int_\Omega 1\,dx}{\int_\Omega (x_k-\bar{x}_k)^2\,dx}.
\]
Clearing the denominator and summing over all indices $k$, we obtain
\[
 \omega_1(\tau)\int_\Omega |x-\bar{x}|^2\,dx \leq \tau |\Omega| d,
\]
which is the desired upper bound. When $\Omega$ is the unit ball, note $\int_\Omega |x|^2\,dx=|\Omega|d/(d+2)$.

Now we treat the lower bound. Let $u\in H^2(\Omega)$ with $\int_\Omega u\,dx=0$. Then
\[
 Q[u]\geq\frac{\tau\int_\Omega |Du|^2\, dx}{\int_\Omega u^2\,dx} \geq \tau \mu_1
\]
by the variational characterization of $\mu_1$. Taking the infimum over all trial functions $u$ for the plate yields $ \omega_1(\tau)\geq \tau\mu_1$.
\end{proof}
\begin{figure}[t!]
\begin{center}
\includegraphics{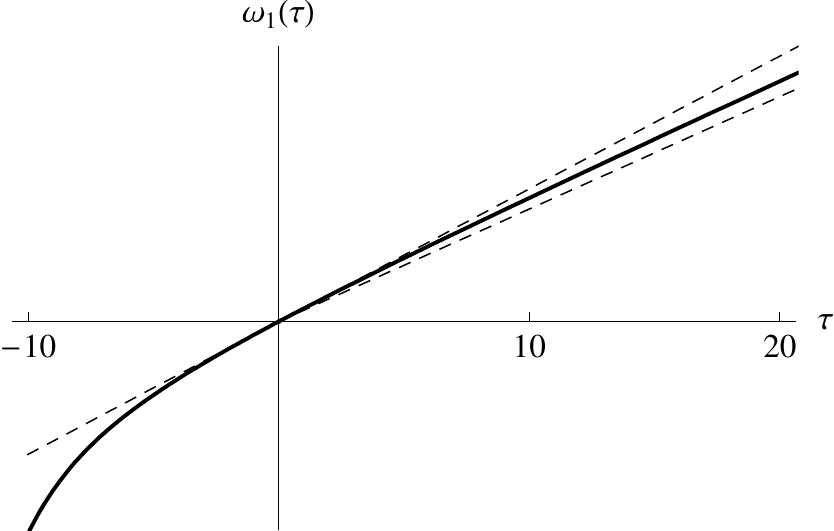}
\caption{The fundamental tone of the disk (solid curve) together with the linear bounds from Lemma~3.1 (dashed lines).}\label{Lemma31bounds}
\end{center}
\end{figure}

Note that Payne \cite{P56} proved linear bounds for eigenvalues of the \emph{clamped} plate under tension. Kawohl, Levine, and Velte \cite{KLV} investigated the sums of the first $d$ eigenvalues as functions of parameters for the clamped plate under tension and compression.

We can also prove another linear upper bound on $\omega_1$, which is just a constant plus the lower bound in Lemma~\ref{wbounds}.
\begin{lemma} \label{wbounds2} For all $\tau\in\RR$,
\[
\omega_1 \leq C(\Omega)+\tau\mu_1,
\]
where the value
\[
 C(\Omega)=\frac{\int_\Omega|D^2v|^2\,dx}{\int_\Omega v^2\,dx}
\]
is given explicitly in terms of the fundamental mode $v$ of the free membrane on $\Omega$.
\end{lemma}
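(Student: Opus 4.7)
The plan is to use $v$ itself as a trial function in the Rayleigh-Ritz characterization of $\omega_1$. The fundamental mode $v$ of the free membrane is $L^2$-orthogonal to the constant function (the zeroth free membrane mode), so $\int_\Omega v\,dx = 0$, which is precisely the admissibility condition needed for $v$ to compete in the variational formula
\[
\omega_1(\tau) = \inf\Bigl\{Q[u] : u\in H^2(\Omega),\ \int_\Omega u\,dx=0\Bigr\}.
\]
Since $v$ is smooth up to the boundary (free membrane eigenfunctions enjoy the same interior/boundary regularity as free plate eigenfunctions), it certainly lies in $H^2(\Omega)$. Hence $\omega_1(\tau) \leq Q[v]$, and the task reduces to evaluating $Q[v]$ explicitly.

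Next I would split $Q[v]$ into its Hessian piece and its gradient piece. The Hessian piece contributes exactly $C(\Omega)$ by definition. For the gradient piece, I would integrate by parts: since $v$ satisfies the Neumann condition $\partial v/\partial n = 0$ on $\partial\Omega$ and the membrane equation $-\Delta v = \mu_1 v$, we have
\[
\int_\Omega |Dv|^2\,dx
= \surfint v\,\frac{\partial v}{\partial n}\,dS - \int_\Omega v\,\Delta v\,dx
= \mu_1\int_\Omega v^2\,dx.
\]
Dividing through by $\int_\Omega v^2\,dx$ gives $Q[v] = C(\Omega) + \tau\mu_1$, which combined with $\omega_1(\tau)\leq Q[v]$ yields the lemma.

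The argument works verbatim for all real $\tau$ since it never uses the sign of $\tau$; it relies only on the variational characterization (valid for all $\tau\in\RR$ by Proposition~\ref{spect}) and on the membrane eigenfunction $v$ being a legitimate mean-zero test function. There is no real obstacle here: the only thing to be careful about is matching sign conventions in the integration by parts and confirming that $v \in H^2(\Omega)$, both of which are immediate.
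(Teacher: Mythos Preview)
Your proof is correct and follows essentially the same approach as the paper: use the membrane fundamental mode $v$ as a trial function in the plate Rayleigh quotient, split $Q[v]$ into its Hessian and gradient pieces, and identify the gradient piece as $\tau\mu_1$ via the membrane equation. You supply a bit more justification (the integration by parts and the regularity of $v$) than the paper does, but the argument is the same.
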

\begin{proof}
 Let $v$ be a fundamental mode of the membrane with $\Delta v=-\mu_1 v$ and $\int_\Omega v\,dx=0$; the membrane boundary condition is $\partial u/\partial n = 0$ on $\partial\Omega$. Then by the variational characterization of eigenvalues,
\begin{align*}
 \omega_1(\tau)\leq Q[v]&= C(\Omega) +\tau Q_\text{M}[v]=C(\Omega)+\tau\mu_1,
\end{align*}
as desired.
\end{proof}

\begin{figure}[h!]
\begin{center}
\includegraphics{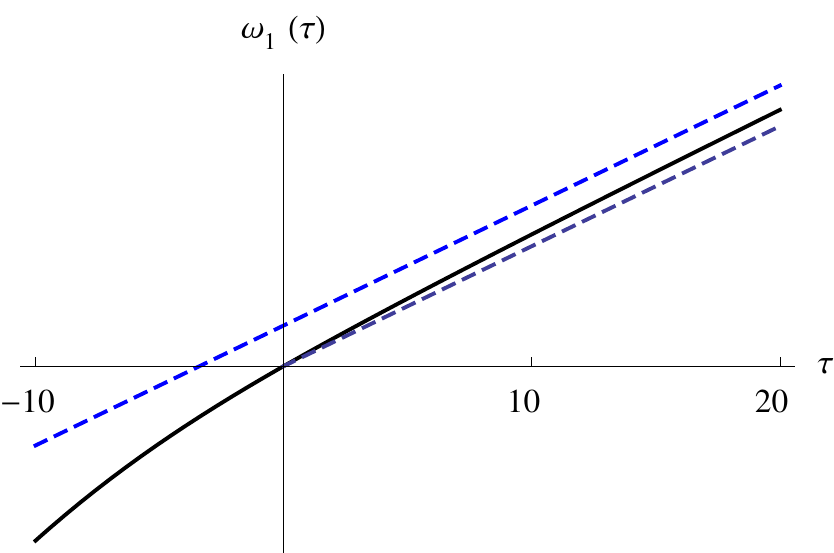}
\caption{The fundamental tone of the disk (solid curve) together with the upper bound of Lemma~3.2 (top dashed line) and the lower bound of Lemma~3.1 (bottom dashed line).}
\end{center}
\end{figure}

\subsection*{Infinite tension limit}

A plate behaves like a membrane as the flexural rigidity tends to zero, that is, as $\tau=(\text{tension}/\text{flexural rigidity})$ tends to infinity. For the fundamental tone, that means:
\begin{cor} For the fundamental tone of the free plate,
\[
\frac{\omega_1(\tau)}{\tau}\to\mu_1\qquad\text{as $\tau\to\infty$.}
\]
\end{cor}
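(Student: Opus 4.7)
The plan is to prove this by a direct squeeze argument using the two preceding lemmas, which together pin $\omega_1(\tau)$ between two linear functions of $\tau$ with the same slope $\mu_1$.

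First I would invoke Lemma~\ref{wbounds}, which gives the lower bound $\omega_1(\tau) \geq \tau \mu_1$ for all $\tau \geq 0$. Dividing by $\tau > 0$ yields $\omega_1(\tau)/\tau \geq \mu_1$, so
\[
\liminf_{\tau \to \infty} \frac{\omega_1(\tau)}{\tau} \geq \mu_1.
\]

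Next, I would apply Lemma~\ref{wbounds2}, which gives the upper bound $\omega_1(\tau) \leq C(\Omega) + \tau \mu_1$ with $C(\Omega)$ a constant depending only on $\Omega$ (namely the Hessian-norm-squared over $L^2$-norm-squared ratio evaluated at the membrane fundamental mode). Dividing by $\tau > 0$ gives $\omega_1(\tau)/\tau \leq C(\Omega)/\tau + \mu_1$, and letting $\tau \to \infty$ we get
\[
\limsup_{\tau \to \infty} \frac{\omega_1(\tau)}{\tau} \leq \mu_1.
\]
Combining the two inequalities yields the claimed limit.

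There is no real obstacle here: the corollary is essentially a formal consequence of the fact that the two linear bounds in Lemmas~\ref{wbounds} and \ref{wbounds2} share the same slope $\mu_1$ and differ only in their $\tau$-independent intercepts. The content of the result lies entirely in the preceding lemmas, so once those are in hand the corollary follows by the sandwich theorem in one line.
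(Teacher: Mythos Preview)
Your proposal is correct and matches the paper's proof essentially verbatim: the paper also invokes Lemmas~\ref{wbounds} and~\ref{wbounds2} to obtain $\mu_1 \leq \omega_1(\tau)/\tau \leq \mu_1 + C/\tau$ and then lets $\tau \to \infty$.
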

\begin{proof}
By Lemmas~\ref{wbounds} and~\ref{wbounds2}, we have
\[
\mu_1\leq \frac{\omega_1(\tau)}{\tau}\leq\mu_1+\frac{C}{\tau}.
\]
Let $\tau\to\infty$.
\end{proof}

The eigenfunctions should converge as $\tau\to\infty$ to the eigenfunctions of the free membrane problem. Proving this for all eigenfunctions seems to require a singular perturbation approach, which has been carried out for the clamped plate in \cite{deGroen}, but we will not need any such facts for our work. For the convergence of the fundamental tone of the clamped plate to the first fixed membrane eigenvalue, see \cite{KLV}.

\subsection*{Vanishing tension limit; moment of inertia}
At $\tau=0$, the lowest eigenvalue is zero and has multiplicity $d+1$ since $Q[u]=0$ for any linear function $u$. We will establish a relationship between the (scalar) moment of inertia $I_\Omega$ of our region $\Omega$ and the derivatives at $\tau=0$ of the first $d$ nontrivial eigenvalues $\omega_1(\tau)$, $\dots$, $\omega_d(\tau)$.
\begin{lemma}\label{minertia} For $\tau>0$, we have 
\[
|\Omega|\sum_{j=1}^d\frac{\tau}{\omega_j(\tau)}\geq I_\Omega.
\]
\end{lemma}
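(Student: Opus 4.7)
The plan is to use the shifted coordinate functions $v_k(x) = x_k - \bar{x}_k$, $k=1,\dots,d$, as trial functions, just as in the upper bound of Lemma~\ref{wbounds}, but now to squeeze information out of the first $d$ eigenvalues simultaneously via a QR-type rotation. Each $v_k$ lies in $H^2(\Omega)$, has vanishing Hessian, satisfies $|Dv_k|^2 = 1$, and has mean zero, so $v_k \perp u_0$ and
\[
a(v_k,v_k) = \tau\!\int_\Omega |Dv_k|^2\,dx = \tau|\Omega|, \qquad \int_\Omega v_k^2\,dx = \int_\Omega (x_k-\bar{x}_k)^2\,dx,
\]
and summing over $k$ gives $\sum_k \int_\Omega v_k^2\,dx = I_\Omega$.

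Next I would expand each $v_k = \sum_{j\geq 1} c_{jk} u_j$ in the orthonormal eigenbasis supplied by Proposition~\ref{spect} (the $j=0$ term is absent because $\int_\Omega v_k\,dx=0$). Form the $d\times d$ matrix $C_0 = (c_{jk})_{1\leq j,k\leq d}$ of the initial Fourier coefficients. By QR decomposition applied to $C_0^{\TT}$, there exists an orthogonal matrix $R=(R_{km})$ such that $C_0 R$ is lower triangular. Define the rotated trial functions
\[
\tilde v_m = \sum_{k=1}^d R_{km} v_k, \qquad m=1,\dots,d.
\]
Because $R$ is orthogonal, $\sum_m \tilde v_m^2 = \sum_k v_k^2$ pointwise, and likewise $\sum_m |D\tilde v_m|^2 = \sum_k |Dv_k|^2 = d$, so each $\tilde v_m$ again has vanishing Hessian and $a(\tilde v_m,\tilde v_m)=\tau|\Omega|$. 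The lower-triangular form of $C_0 R$ is exactly the statement that $\int_\Omega \tilde v_m u_j\,dx = (C_0 R)_{jm} = 0$ for $j=1,\dots,m-1$; combined with $\int_\Omega \tilde v_m\,dx=0$, this makes $\tilde v_m$ an admissible trial function for $\omega_m$.

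Applying the Rayleigh--Ritz characterization of the $m$-th eigenvalue gives
\[
\omega_m(\tau) \leq Q[\tilde v_m] = \frac{\tau|\Omega|}{\int_\Omega \tilde v_m^2\,dx}, \qquad\text{i.e.}\qquad \int_\Omega \tilde v_m^2\,dx \leq \frac{\tau|\Omega|}{\omega_m(\tau)}.
\]
Summing from $m=1$ to $d$ and using the rotational invariance $\sum_m \int_\Omega \tilde v_m^2\,dx = \sum_k \int_\Omega v_k^2\,dx = I_\Omega$ yields the claimed inequality.

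The only nontrivial step is the QR rotation, whose purpose is to upgrade the naive trial function estimate (which only bounds $\omega_1$) into a simultaneous estimate on the first $d$ eigenvalues. The rotation is available because $\omega_m(\tau)>0$ for $\tau>0$ (so division by $\omega_m$ is legitimate) and because $a(\tilde v_m,\tilde v_m)$ and $\int_\Omega \tilde v_m^2\,dx$ transform covariantly under orthogonal change of basis in the span of $v_1,\dots,v_d$. No obstacle beyond this bookkeeping is expected.
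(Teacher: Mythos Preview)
Your proof is correct. The QR rotation produces trial functions $\tilde v_m$ orthogonal to $u_0,\dots,u_{m-1}$, and since the $v_k$ are linearly independent and $R$ has nonzero columns, each $\tilde v_m$ is nonzero, so the Rayleigh quotient bound $\omega_m\leq\tau|\Omega|/\int_\Omega\tilde v_m^2\,dx$ is legitimate. One cosmetic point: the claim $a(\tilde v_m,\tilde v_m)=\tau|\Omega|$ does not follow from the sum identity $\sum_m|D\tilde v_m|^2=d$ alone, but rather from $a(v_k,v_l)=\tau|\Omega|\delta_{kl}$ together with the orthogonality of $R$ (equivalently, $|D\tilde v_m|^2=\sum_k R_{km}^2=1$ for each $m$ individually). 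This is trivial to fix.

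The paper takes a different but closely related route: it invokes the \emph{inverse trace} variational characterization from Bandle,
\[
\sum_{j=1}^d\frac{1}{\omega_j(\tau)}=\max_{L_d}\sum_{j=1}^d\int_\Omega w_j^2\,dx,
\]
where the maximum is over $d$-dimensional subspaces $L_d\subset\{v\in H^2(\Omega):\int_\Omega v\,dx=0\}$ and $\{w_j\}$ is any $a$-orthonormal basis of $L_d$. Plugging in $w_j=x_j/\sqrt{\tau|\Omega|}$ (after centering) gives the result in one line. Your QR argument is essentially a hands-on proof of the inequality half of this characterization, specialized to the span of the coordinate functions. The paper's version is shorter but relies on an external reference; yours is self-contained and makes explicit the orthogonalization step that underlies the Bandle principle.
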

We will not need this result later, except as motivation for some conjectures. 

 If $\omega_j(\tau)$ is differentiable from the right at $t=0$ then we deduce the following bound involving the derivatives of eigenvalues with respect to $\tau$:
\[
|\Omega|\sum_{j=1}^d\frac{1}{\omega_j^\prime(0)}\geq I_\Omega.
\]
\begin{proof} We assume our plate has its center of mass at the origin, so that the scalar moment of inertia $I_\Omega$ may be expressed as
\[
 I_\Omega=|\Omega|\int_\Omega r^2\,dx \qquad\text{or}\qquad  I_\Omega=\trace{M_\Omega},
\]
where $M_\Omega=\int_\Omega xx^T\,dx$ is the moment matrix whose entries are given by $m_{ij}=\int_\Omega x_ix_j\,dx$. 

Take $a(.,.)$ to be the sesquilinear form from Chapter 2. As in \cite[p. 99]{Bandle}, we define the inverse trace $\trace^{-1}[L_m]$ of the $m$-dimensional space \\$L_m\subset \{v\in H^2(\Omega): \int_\Omega v\,dx=0\}$ by
\[
 \trace^{-1}[L_m]:=\sum_{j=1}^mQ[w_j]^{-1}=\sum_{j=1}^m\int_\Omega w_j^2\,dx,
\]
where the $w_j$ form a basis of $L_m$ satisfying the orthonormality condition\\ $a(w_j,w_k)=\delta_{jk}$.  Then we have, again by \cite{Bandle}, the variational characterization
\[
 \sum_{j=1}^d\frac{1}{\omega_j(\tau)} = \max\left\{\trace^{-1}[L_d] : L_d\subset H^2(\Omega)\quad \text{with}\quad \int_\Omega v\,dx=0~ \forall v\in L_d\right\}.
\]
Considering the coordinate functions $x_j$, we see $a(x_j,x_k)=\tau|\Omega|\delta_{jk}$ and\\ $\int_\Omega x_j=0$, so we may take $w_j=x_j/\sqrt{\tau|\Omega|}$, and our variational characterization gives us
\begin{equation}
  \tau|\Omega|\sum_{j=1}^d\frac{1}{\omega_j(\tau)} \geq\sum_{j=1}^d\int_\Omega x_j^2\,dx.\label{inertiabound}
\end{equation}
The righthand side is simply the trace of $M_\Omega$ and hence is equal to our scalar moment of inertia $I_\Omega$. \end{proof}

\chapter{Ultraspherical Bessel functions}\label{bessel}
We must examine properties of $d$-dimensional ultraspherical Bessel functions, for they provide the eigenfunctions on the unit ball for dimensions 2 and higher, in the next chapter. For more information on Bessel functions, see \cite[p.358-389]{AShandbook}. For more information on spherical and ultraspherical Bessel functions, see  \cite[p.437-455]{AShandbook} ($d=3$ only) and \cite{LS94} (all $d\geq2$).

\section*{Definitions}
The Bessel function $J_\nu(z)$ is defined by the power series
\[
 J_\nu(z)=\sum_{k=0}^\infty\frac{(-1)^k}{k!\,\Gamma(\nu+k+1)}\left(\frac{z}{2}\right)^{2k+\nu},
\]
and is hence analytic. This function solves the Bessel differential equation, $z^2w''+zw'+(z^2-\nu^2)w = 0$. 

For higher dimensions, we need to consider spherical ($d=3$) and ultraspherical ($d\geq 4$) Bessel functions $j_l(z)$, defined by:
\begin{align*}
j_{l}(z) &= z^{-s}J_{s+l}(z)\\
\text{with}\quad s &= \frac{d-2}{2}.
\end{align*}
Such functions solve the equation
\begin{equation}
z^2w''+(d-1)zw'+\Big(z^2-l(l+d-2)\Big)w = 0.\label{besseleqn}
\end{equation}

Analogously, the modified Bessel function $I_\nu(z)$ is given by the power series
\[
 I_\nu(z)=\sum_{k=0}^\infty\frac{1}{k!\,\Gamma(\nu+k+1)}\left(\frac{z}{2}\right)^{2k+\nu},
\]
and solves the modified Bessel equation $z^2w''+zw'-(z^2+\nu^2)w = 0$. We define the higher-dimensional analog $i_{l}(z)$ as follows:
\begin{align*}
i_{l}(z) &= z^{-s}I_{s+l}(z)\\
\text{with}\quad s &= \frac{d-2}{2}.
\end{align*}
Such functions solve the equation
\begin{equation}
z^2w''+(d-1)zw'-\Big(z^2+l(l+d-2)\Big)w = 0. \label{modbesseleqn}
\end{equation}

\subsection*{Recurrence Relations and power series}
The Bessel functions $J_\nu$ and $I_\nu$ have a number of useful recurrence relations. Those listed below are taken from \cite[p. 361, 376]{AShandbook}.
\begin{align*}
 \frac{2\nu}{z}J_\nu(z)&=J_{\nu-1}(z)+J_{\nu+1}(z)\\
 J_\nu^\prime(z)&=\frac{\nu}{z}J_\nu(z)-J_{\nu+1}(z)\\
 &=J_{\nu-1}(z)-\frac{\nu}{z}J_{\nu}(z)\\
 \frac{2\nu}{z}I_\nu(z)&=I_{\nu-1}(z)-I_{\nu+1}(z)\\
 I_\nu^\prime(z)&=\frac{\nu}{z}I_\nu(z)+I_{\nu+1}(z)\\
 &=I_{\nu-1}(z)-\frac{\nu}{z}I_{\nu}(z)
\end{align*}
From these we also have recurrence relations involving second derivatives:
\begin{align*}
J_\nu^{\prime\prime}(z)&=\left(\frac{\nu^2-\nu}{z^2}-1\right)J_\nu(z)+\frac{1}{z}J_{\nu+1}(z)\\
I_\nu^{\prime\prime}(z)&=\left(\frac{\nu^2-\nu}{z^2}+1\right)I_\nu(z)-\frac{1}{z}I_{\nu+1}(z)
\end{align*}

The ultraspherical Bessel functions have similar recurrence relations, all of which follow from the definition and application of the corresponding ordinary Bessel recurrence relations:
\begin{align}
\frac{d-2+2l}{z}j_l(z) &= j_{l-1}(z)+j_{l+1}(z)\label{j1}\\
j_l^\prime(z) &= \frac{l}{z}j_l(z)-j_{l+1}(z)\label{j2}\\
&=j_{l-1}(z)-\frac{l+d-2}{z}j_l(z)\label{j3}\\
\frac{d-2+2l}{z}i_l(z) &= i_{l-1}(z)-i_{l+1}(z)\label{i1}\\
i_l^\prime(z) &= \frac{l}{z}i_l(z)+i_{l+1}(z)\label{i2}\\
&=i_{l-1}(z)-\frac{l+d-2}{z}i_l(z)\label{i3}
\end{align}
Note that if we take $d=2$, each of these simplifies to the corresponding relation for Bessel functions.

We also have recurrence relations for the second derivatives:
\begin{align}
j_l^{\prime\prime}(z) &= \left(\frac{l^2-l}{z^2}-1\right)j_l(z)+\frac{d-1}{z}j_{l+1}(z)\label{j4}\\
i_l^{\prime\prime}(z) &= \left(\frac{l^2-l}{z^2}+1\right)i_l(z)-\frac{d-1}{z}i_{l+1}(z)\label{i4}.
\end{align}
Again, when $d=2$ each recurrence relation simplifies to its two-dimensional analog.

We may also write a power series for the ultraspherical Bessel functions $j_l(z)$ and $i_l(z)$ using the series for the corresponding $J_{s+l}$ and $I_{s+l}$:
\begin{align}
 j_l(z) &= \sum_{k=0}^\infty\frac{(-1)^k\,2^{1-d/2}}{k!\,\Gamma(k+\frac{d}{2}+l)}\left(\frac{z}{2}\right)^{2k+l}\label{jseries}\\
i_l(z) &=\sum_{k=0}^\infty\frac{2^{1-d/2}}{k!\,\Gamma(k+\frac{d}{2}+l)}\left(\frac{z}{2}\right)^{2k+l} .\label{iseries}
\end{align}
By examining the power series \eqref{iseries}, it is immediate that $i_l(z)$ and its derivatives are all positive on $(0,\infty)$. Since the terms of the power series for $j_l$ and $i_l$ are the same up to a sign, we also have that the derivatives of $j_l$ are dominated by those of $i_l$:
\begin{equation}
\Big|j_l^{(m)}(z)\Big| \leq i_l^{(m)}(z)\qquad \text{for $m\geq0$, $z\geq 0$,}
\end{equation}
with equality only at $z=0$.

\subsection*{Other needed facts}
To prove our main result, we will need several facts about Bessel functions and their derivatives. We begin with a result on the zeroes of the $j_l^\prime(z)$.

\begin{prop}[L. Lorch and P. Szego, \cite{LS94}]\label{propLS}
Let $p_{l,k}$ denote the $k$th positive zero of $j_l^\prime(z)$. Then for $d\geq3$ and $l\geq 1$,
\[
 \frac{l(d+2l)(d+2l+2)}{d+4l+2}<\left(p_{l,1}\right)^2<l(d+2l).
\]
In particular, for $p_{1,1}$ the first zero of $j_1^\prime$, we deduce
\[
d< p_{1,1}^2< d+2.
\]
This inequality holds for all $d\geq2$.
\end{prop}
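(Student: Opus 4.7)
The plan is to interpret $p_{l,1}^2$ as the first eigenvalue of a Sturm-Liouville problem for the upper bound, and to use the Bessel recurrence relations to reduce the lower bound to an explicit Bessel inequality. For the upper bound, I would observe that $u(z) := j_l(p_{l,1}\,z)$ is positive on $(0,1)$ and solves the eigenvalue problem
\[
 -(z^{d-1} u')' + l(l+d-2)\, z^{d-3}\, u \;=\; \lambda\, z^{d-1}\, u \quad\text{on } (0,1),
\]
with Neumann condition $u'(1)=0$ and regularity at $0$. Being positive, $u$ is the first eigenfunction, so $p_{l,1}^2$ is the first eigenvalue, given by the Rayleigh quotient
\[
 p_{l,1}^2 \;=\; \min_{u}\, \frac{\int_0^1 \bigl[\,z^{d-1}(u')^2 + l(l+d-2)\, z^{d-3}\, u^2\,\bigr]\, dz}{\int_0^1 z^{d-1}\, u^2\, dz}\,.
\]
Taking $u(z) = z^l$ as trial function---regular at $0$ but not an eigenfunction---an elementary evaluation of each integral yields the value $l(d+2l)$, giving the strict upper bound $p_{l,1}^2 < l(d+2l)$.

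For the lower bound I would iterate the recurrence $(d+2l+2k)\,j_{l+k+1}(z) = z\,j_{l+k}(z) + z\,j_{l+k+2}(z)$ (which is \eqref{j1} applied with index $l+k+1$), starting from the relation $p_{l,1}\,j_{l+1}(p_{l,1}) = l\,j_l(p_{l,1})$ (which is \eqref{j2} combined with $j_l'(p_{l,1}) = 0$). Two applications produce explicit closed forms for $j_{l+2}(p_{l,1})$ and $j_{l+3}(p_{l,1})$ as rational functions of $p_{l,1}^2$ multiplying $j_l(p_{l,1})$, and a short algebraic simplification yields
\[
 j_{l+1}(p_{l,1}) - j_{l+3}(p_{l,1}) \;=\; \frac{j_l(p_{l,1})}{p_{l,1}^3}\,\bigl[\,p_{l,1}^2(d+4l+2) - l(d+2l)(d+2l+2)\,\bigr].
\]
Thus the asserted lower bound is equivalent to the Bessel inequality $j_{l+3}(p_{l,1}) < j_{l+1}(p_{l,1})$, and establishing this inequality for all $d\ge 3$ and $l\ge 1$ is the main obstacle; the argument of Lorch and Szeg\H{o} exploits the analytic structure of the ultraspherical Bessel power series together with interlacing properties of the Bessel zeros.

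For the specialized claim $d < p_{1,1}^2 < d+2$, the upper bound is the $l=1$ case of $p_{l,1}^2 < l(d+2l)$. The lower bound follows from the $l=1$ specialization of the general bound, since $\frac{(d+2)(d+4)}{d+6} = d + \frac{8}{d+6} > d$. A more direct approach, avoiding the Lorch--Szeg\H{o} machinery, is to observe that $p_{k,1}^2$ is strictly increasing in $k\ge 1$ by monotonicity of the first Sturm-Liouville eigenvalue in the coefficient $k(k+d-2)$; hence $p_{1,1} < p_{3,1}$ and $j_3$ is still increasing at $p_{1,1}$, so the recurrence calculation yields $j_3'(p_{1,1}) = (d+2)(p_{1,1}^2 - (d+1))\,j_1(p_{1,1})/p_{1,1}^3 > 0$, forcing $p_{1,1}^2 > d+1 > d$. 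This direct argument is valid for every $d \ge 2$, explaining why the dimension restriction can be relaxed in the $l=1$ case.
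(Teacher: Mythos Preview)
The paper does not prove this proposition; it is quoted as a result of Lorch and Szeg\H{o} \cite{LS94} and used as a black box. So there is no proof in the paper to compare against, and your proposal should be assessed on its own merits.

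Your Rayleigh--quotient argument for the upper bound is correct and complete. The function $u(z)=j_l(p_{l,1}z)$ is a positive first eigenfunction for the stated singular Sturm--Liouville problem, and the trial function $z^l$ is admissible for $d\ge 2$, $l\ge 1$; the computation gives exactly $l(d+2l)$ with strict inequality since $z^l$ does not satisfy the equation. This argument already covers $d=2$, so the upper half of the ``in particular'' claim needs no separate treatment.

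For the general lower bound you correctly reduce the question, via the recurrences \eqref{j1}--\eqref{j2}, to the inequality $j_{l+3}(p_{l,1})<j_{l+1}(p_{l,1})$, but you do not prove that inequality; you defer to \cite{LS94}. That is an honest acknowledgement, but it means the general lower bound remains unproved in your write-up. This inequality is the genuine content of the Lorch--Szeg\H{o} result and is not a routine consequence of the recurrences.

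Your direct argument for the $l=1$ lower bound is correct and is a real improvement over what the paper uses. The eigenvalue monotonicity in the potential coefficient $l(l+d-2)$ gives $p_{1,1}<p_{3,1}$, hence $j_3'(p_{1,1})>0$; your recurrence computation $j_3'(p_{1,1})=(d+2)\bigl(p_{1,1}^2-(d+1)\bigr)j_1(p_{1,1})/p_{1,1}^{3}$ is right, and yields the sharper bound $p_{1,1}^2>d+1$ for all $d\ge 2$. The paper, by contrast, invokes the Lorch--Szeg\H{o} bound for $d\ge 3$ and falls back on the numerical value $p_{1,1}\approx 1.84$ for $d=2$ (see the proof of Theorem~\ref{thm2}), so your argument is both cleaner and more uniform in the dimension.
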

Recall $s=(d-2)/2$. 
\begin{lemma} \label{fact1} The functions $j_l$ and $J_{s+l}$ have the same sign. In particular, for $l= 1,\dots,5$ and any $d\geq 2$, we have $j_l(z)>0$ for $z\leq p_{1,1}$.
\end{lemma}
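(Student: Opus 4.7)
The plan is to split the lemma into two pieces. The sign-matching claim is immediate from the definition $j_l(z) = z^{-s}J_{s+l}(z)$: since $z^{-s} > 0$ on $(0,\infty)$, the functions $j_l$ and $J_{s+l}$ have the same sign there. So the substantive content is the strict positivity of $j_l$ on $(0,p_{1,1}]$, and I would aim to prove it directly from the power series \eqref{jseries}, rather than through a zero-comparison for $J_{s+l}$.

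The main step is an alternating-series estimate. Write
\[
j_l(z)=\sum_{k\geq 0}(-1)^k c_k(z), \qquad c_k(z)=\frac{2^{1-d/2}}{k!\,\Gamma(k+\tfrac{d}{2}+l)}\left(\frac{z}{2}\right)^{2k+l}>0,
\]
and compute the consecutive ratio
\[
\frac{c_{k+1}(z)}{c_k(z)}=\frac{z^2}{4(k+1)(k+\tfrac{d}{2}+l)},
\]
which is strictly decreasing in $k$ at fixed $z>0$. Thus $(c_k(z))_{k\geq 0}$ is strictly decreasing as soon as the ratio is less than $1$ at the tightest index $k=0$, i.e.\ as soon as $z^2 < 4(\tfrac{d}{2}+l)=2d+4l$. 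Under this condition the alternating-series estimate gives
\[
j_l(z)\;\geq\; c_0(z)-c_1(z)\;=\;c_0(z)\left(1-\frac{z^2}{2d+4l}\right)>0.
\]

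The last step is to check that the hypothesis $z^2 < 2d+4l$ accommodates all $z\in(0,p_{1,1}]$. By Proposition~LS we have $p_{1,1}^2<d+2$, and the elementary inequality $(2d+4l)-(d+2)=d+4l-2\geq 4$ for $d\geq 2$, $l\geq 1$ gives $p_{1,1}^2<d+2\leq 2d+4l$ with room to spare. Therefore $j_l(z)>0$ for all $z\in(0,p_{1,1}]$, which proves the stated range $l=1,\dots,5$ (the argument in fact works for every $l\geq 1$).

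There is no real obstacle here; the only thing to verify carefully is the monotonicity of $|c_k(z)|$ in $k$, which reduces to the single inequality at $k=0$ because the ratio is decreasing in $k$. The restriction $l\leq 5$ in the statement reflects the range the author will actually need in the later chapters, not a limitation of the argument.
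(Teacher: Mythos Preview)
Your argument is correct. The alternating-series bound is sound: the ratio $c_{k+1}(z)/c_k(z)=z^2/\bigl(4(k+1)(k+\tfrac{d}{2}+l)\bigr)$ is strictly decreasing in $k$, so once it is below $1$ at $k=0$ the terms are strictly decreasing to $0$, and grouping in pairs gives $j_l(z)>c_0(z)-c_1(z)>0$. The range check $p_{1,1}^2<d+2<2d+4l$ via Proposition~LS is exactly what is needed, and you are right that nothing in the argument uses $l\le 5$.

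This is a genuinely different route from the paper's. The paper argues by comparing zeros: since $j_l$ and $J_{s+l}$ share sign and the first positive zeros $j_{\nu,1}$ of $J_\nu$ are increasing in $\nu$, it suffices to note that $p_{1,1}$ lies strictly to the left of the first zero of $j_1$ (equivalently of $J_{s+1}$), whence $J_{s+l}>0$ on $(0,p_{1,1}]$ for every $l\ge 1$. That proof leans on the classical monotonicity of Bessel zeros in the order, an external fact. Your proof instead stays internal to the power series already set up in the chapter and uses only the numerical bound $p_{1,1}^2<d+2$ from Proposition~LS; it is more self-contained and gives an explicit quantitative margin $1-z^2/(2d+4l)$. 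The paper's approach, on the other hand, yields the slightly stronger conclusion that positivity persists all the way to the first zero of $j_1$, not just to $p_{1,1}$, without needing Proposition~LS at all.
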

\begin{proof} The first statement is immediate from the definition of the ultraspherical Bessel functions. For the second statement, we appeal to established facts of Bessel functions. 
If we write $j_{l,1}$ for the first nontrivial zero of the Bessel function $J_l(z)$. It is a well-known fact that $J_l(z)$ is positive on $(0,j_{l,1})$ and the zeroes $j_{l,1}$ are increasing in $l$ for $l\geq 1$. Because $J_1(z)=0$ at $z=0$ and $j_{1,1}$ with no zeroes between, we have the same for $j_1(z)$ and thus the first root of $j_1^\prime(z)$, $p_{1,1}$, lies between $0$ and $j_{1,1}$. Therefore for any $d\geq 2$ and any $l\geq1$, we have $J_l(z)>0$ and hence $j_l(z)>0$ on $(0,p_{1,1}]$.
\end{proof}

\begin{lemma} \label{fact1.5} We have $j_1^\prime>0$ on $(0,p_{1,1})$.
 \end{lemma}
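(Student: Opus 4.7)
The strategy is to combine three ingredients already available: (i) the definition of $p_{1,1}$ as the \emph{first} positive zero of $j_1'$, (ii) the continuity of $j_1'$ inherited from its analytic power series, and (iii) the fact established in Lemma~\ref{fact1} that $j_1 > 0$ on $(0, p_{1,1}]$, together with the boundary value $j_1(0) = 0$ from the series \eqref{jseries}.

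The plan is to first show that $j_1'$ is strictly positive somewhere on $(0, p_{1,1})$. One quick route is to invoke the Mean Value Theorem: since $j_1$ is continuous on $[0, p_{1,1}]$ with $j_1(0) = 0$ and $j_1(p_{1,1}) > 0$, there must exist some $c \in (0, p_{1,1})$ with $j_1'(c) > 0$. Alternatively, differentiate the power series \eqref{jseries} termwise to read off
\[
 j_1'(0) = \frac{2^{-d/2}}{\Gamma(d/2 + 1)} > 0,
\]
which by continuity gives $j_1' > 0$ on a right-neighborhood of $0$.

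Next, I would use the defining property of $p_{1,1}$: it is the \emph{first} positive zero of $j_1'$, so $j_1'(z) \neq 0$ for every $z \in (0, p_{1,1})$. Because $j_1'$ is continuous on this open interval and never vanishes there, the Intermediate Value Theorem forces $j_1'$ to have constant sign on $(0, p_{1,1})$. Combining this with the positivity at the interior point $c$ (or in the right-neighborhood of $0$), that constant sign must be positive, which is exactly the claim.

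There is really no serious obstacle here — the lemma is essentially a formal consequence of the definition of ``first positive zero'' together with continuity, once we know $j_1$ is not identically zero near the origin and is positive at the endpoint $p_{1,1}$. The only thing to be careful about is citing the correct prior results (Lemma~\ref{fact1} for $j_1(p_{1,1}) > 0$, the power series \eqref{jseries} for $j_1(0) = 0$ and analyticity) rather than reproving them.
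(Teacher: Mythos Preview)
Your proof is correct and follows essentially the same approach as the paper's. The paper's own proof is a terse one-liner invoking that $j_1>0$ on $(0,j_{1,1})$ together with the definition of $p_{1,1}$; you have simply made explicit the continuity/IVT step and offered two equivalent ways (Mean Value Theorem using $j_1(0)=0$ and $j_1(p_{1,1})>0$, or direct computation of $j_1'(0)>0$ from the series) to pin down the sign.
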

\begin{proof}
 This follows from the observation that $j_1(z)>0$ on $(0,j_{1,1})$ and the definition of $p_{1,1}$.
\end{proof}

\begin{lemma}\label{fact2} We have $j_2'>0$ on $(0,p_{1,1}]$.
\end{lemma}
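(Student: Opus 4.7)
The plan is to argue by contradiction using the ODE \eqref{besseleqn} satisfied by $j_2$. The power series \eqref{jseries} shows $j_2(z)\sim c\,z^2$ near $z=0$ for some $c>0$, so $j_2'(z)>0$ on a right-neighbourhood of $0$. If $j_2'$ failed to be positive on the full interval $(0,p_{1,1}]$, there would be a smallest point $z_0\in(0,p_{1,1}]$ with $j_2'(z_0)=0$. On $(0,z_0)$ we would still have $j_2'>0$, so elementary calculus forces $j_2''(z_0)\leq 0$.

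The contradiction will come from evaluating the ODE at $z_0$. With $l=2$ the ultraspherical Bessel equation \eqref{besseleqn} reads $z^2 j_2''+(d-1)zj_2'+(z^2-2d)j_2=0$, and the hypothesis $j_2'(z_0)=0$ reduces it to
\[
j_2''(z_0) \;=\; \frac{2d-z_0^2}{z_0^2}\,j_2(z_0).
\]
Lemma~\ref{fact1} furnishes $j_2(z_0)>0$, so the sign of $j_2''(z_0)$ equals the sign of $2d-z_0^2$. Since $z_0\leq p_{1,1}$, Proposition~LS gives $p_{1,1}^2<d+2$, and $d+2\leq 2d$ for every $d\geq 2$; combining these yields $j_2''(z_0)>0$, contradicting the calculus bound.

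The main thing to verify is that the inequality $p_{1,1}^2<2d$ holds uniformly in $d\geq 2$. This leans on the borderline dimension $d=2$, where $d+2=2d$ with equality; fortunately Proposition~LS gives \emph{strict} inequality $p_{1,1}^2<d+2$, so $z_0^2\leq p_{1,1}^2<d+2\leq 2d$ remains strict in every dimension, and no separate two-dimensional argument is required. I expect this borderline check to be the only delicate point in the proof.
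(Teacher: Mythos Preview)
Your proof is correct, and it takes a genuinely different route from the paper's. The paper invokes Proposition~LS a second time, now with $l=2$, to obtain the lower bound $p_{2,1}^2>2(d+4)(d+6)/(d+10)$ on the first zero of $j_2'$; it then checks algebraically that this exceeds the upper bound $p_{1,1}^2<d+2$, so $j_2'$ cannot vanish on $(0,p_{1,1}]$. Your argument avoids the $l=2$ case of Proposition~LS altogether: you use only the ODE, the positivity of $j_2$ from Lemma~\ref{fact1}, and the single bound $p_{1,1}^2<d+2$ (which the paper explicitly notes holds for all $d\geq 2$). Your approach is thus more self-contained and needs less external input from the Lorch--Szego result; in particular, since Proposition~LS is stated only for $d\geq 3$ in its general form, your argument sidesteps any worry about whether the lower bound on $p_{2,1}^2$ is available in dimension two. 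The paper's route, on the other hand, is a one-line comparison of two explicit bounds and requires no ODE reasoning.
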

\begin{proof} Let $p_{2,1}$ denote the first zero of $j_2'$. By Proposition~\ref{propLS}, $p_{1,1}^2<d+2$ and
\[
p_{2,1}^2 >\frac{2(d+4)(d+6)}{d+10}.
\]
Then $p_{2,1}^2-p_{1,1}^2>(d^2+8d+28)/(10+d)>0$, so that $j_2^\prime>0$ on $(0,p_{1,1}]$. 
\end{proof}

\begin{lemma}\label{fact3} We have $j_1'' < 0$ on $(0,p_{1,1}]$.
\end{lemma}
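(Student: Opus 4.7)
The plan is to combine the Bessel equation for $j_1$ with endpoint information supplied by Proposition~\ref{propLS} and Lemmas~\ref{fact1}--\ref{fact1.5}. I would first differentiate the Bessel equation \eqref{besseleqn} with $l=1$, namely
\[
z^2 j_1'' + (d-1) z j_1' + (z^2 - (d-1)) j_1 = 0,
\]
with respect to $z$ to obtain
\[
z^2 j_1''' + (d+1) z\, j_1'' + z^2 j_1' + 2z\, j_1 = 0.
\]
Hence at any $z > 0$ where $j_1'''(z) = 0$---that is, at any interior critical point of $j_1''$---one has the identity
\[
j_1''(z) = -\frac{2 j_1(z) + z\, j_1'(z)}{d+1}.
\]
By Lemma~\ref{fact1} we have $j_1 > 0$ on $(0, p_{1,1}]$, and by Lemma~\ref{fact1.5} we have $j_1' \ge 0$ on $(0, p_{1,1}]$, so the right-hand side is strictly negative throughout that interval. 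Thus $j_1''$ is strictly negative at every critical point of itself in $(0, p_{1,1}]$.

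Next I would determine the sign of $j_1''$ at the two endpoints of the interval. From the power series \eqref{jseries}, the leading term of $j_1''(z)$ near $z = 0$ is $-\dfrac{3 \cdot 2^{-1-d/2}}{\Gamma(d/2+2)}\, z$, which is strictly negative for $z > 0$ small. At $z = p_{1,1}$ we have $j_1'(p_{1,1}) = 0$ by definition, so the Bessel equation reduces to
\[
j_1''(p_{1,1}) = \frac{(d-1) - p_{1,1}^2}{p_{1,1}^2}\, j_1(p_{1,1}),
\]
which is strictly negative since $p_{1,1}^2 > d > d-1$ by Proposition~\ref{propLS} and $j_1(p_{1,1}) > 0$ by Lemma~\ref{fact1}.

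The proof then closes by contradiction. Suppose $j_1''$ vanishes somewhere in $(0, p_{1,1}]$, and let $z_0$ be the smallest such zero; by the endpoint computation, $z_0 < p_{1,1}$, and since $j_1'' < 0$ near $0$ we have $j_1'' < 0$ on $(0, z_0)$. Continuity forces $j_1'''(z_0) \ge 0$. The case $j_1'''(z_0) = 0$ is immediately ruled out by the critical-point identity, which would yield $j_1''(z_0) < 0 \ne 0$. In the remaining case $j_1'''(z_0) > 0$, the function $j_1''$ becomes strictly positive just past $z_0$; combined with $j_1''(p_{1,1}) < 0$, this forces $j_1''$ to attain a positive maximum at some interior point $z_m \in (z_0, p_{1,1})$. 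At $z_m$ we have $j_1'''(z_m) = 0$, and the critical-point identity again gives $j_1''(z_m) < 0$, contradicting positivity.

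The main obstacle is choreographing these three ingredients---the leading behaviour of $j_1''$ at $z = 0$, the explicit value at $z = p_{1,1}$ controlled by the lower bound $p_{1,1}^2 > d$ from Proposition~\ref{propLS}, and the critical-point identity extracted from the differentiated Bessel equation---so that no zero of $j_1''$ can fit into $(0, p_{1,1}]$. Once these pieces are in place, the contradiction argument runs essentially on autopilot via the intermediate value theorem.
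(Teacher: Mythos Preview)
Your argument is correct, but it takes a genuinely different route from the paper's proof. The paper simply combines the recurrence relations \eqref{j4} and \eqref{j3}: from \eqref{j4} with $l=1$ one gets $j_1''(z)=\frac{d-1}{z}j_2(z)-j_1(z)$, and substituting $j_1(z)=j_2'(z)+\frac{d}{z}j_2(z)$ from \eqref{j3} with $l=2$ yields $j_1''(z)=-\frac{1}{z}j_2(z)-j_2'(z)$, which is manifestly negative on $(0,p_{1,1}]$ by Lemmas~\ref{fact1} and~\ref{fact2}. Your approach instead differentiates the Bessel equation and runs a maximum-principle-style contradiction, using the endpoint values and the critical-point identity to exclude any zero of $j_1''$. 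The trade-off: the paper's argument is shorter and algebraic, but it relies on Lemma~\ref{fact2} (positivity of $j_2'$), which in turn leans on the quantitative bounds for $p_{2,1}$ from Proposition~\ref{propLS}. Your argument bypasses $j_2'$ entirely and needs only the lower bound $p_{1,1}^2>d-1$ together with Lemmas~\ref{fact1} and~\ref{fact1.5}; it is longer but uses less external input.
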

\begin{proof}
We see that
\begin{align*}
j_1^{\prime\prime}(z) &= \frac{d-1}{z}j_2(z)-j_1(z) &&\text{by \eqref{j4}}\\
&=-\frac{1}{z}j_2(z)-j_2^\prime(z)&&\text{by \eqref{j3} with $l=2$.}
\end{align*}
Since both $j_2$, $j_2^\prime$ are positive on $(0,p_{1,1}]$ by the previous lemmas, we obtain $j_1^{\prime\prime}$ on that same interval.
\end{proof}

\begin{lemma}\label{fact4} We have $j_1^{(4)}>0$ on $(0,p_{1,1}]$.
\end{lemma}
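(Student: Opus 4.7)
My plan follows the pattern of Lemma~\ref{fact3}: reduce $j_1^{(4)}$ via the ultraspherical Bessel recurrence relations to an expression whose positivity can be read off from the signs of its coefficients together with the positivity results of the earlier lemmas. Beginning with $j_1''(z) = -j_2(z)/z - j_2'(z)$, which was established in the proof of Lemma~\ref{fact3}, I would differentiate twice, using \eqref{j2} and \eqref{j4} (with $l=2$ and $l=3$) to rewrite every derivative appearing along the way in terms of $j_2, j_3, j_4$. Collecting terms gives the intermediate expression
\[
j_1^{(4)}(z) = \frac{2}{z} j_2(z) + \frac{7-2d-z^2}{z^2} j_3(z) + \frac{d-2}{z} j_4(z),
\]
and one further application of \eqref{j1} at $l=3$ (solving $(d+4)j_3/z = j_2 + j_4$ for $j_2$) collapses this to the clean identity
\[
j_1^{(4)}(z) = \frac{15-z^2}{z^2} j_3(z) + \frac{d-4}{z} j_4(z). \qquad (\mathrm{F})
\]

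Two further one-step rewrites of (F) via \eqref{j1} produce equivalent forms tuned to different ranges of $d$. Solving $(d+4)j_3/z = j_2 + j_4$ for $j_4$ and substituting gives back
\[
j_1^{(4)}(z) = \frac{4-d}{z} j_2(z) + \frac{d^2-1-z^2}{z^2} j_3(z), \qquad (\mathrm{A})
\]
while solving $(d+6)j_4/z = j_3 + j_5$ for $j_4$ and substituting gives
\[
j_1^{(4)}(z) = \frac{5(3d+18-2z^2)}{(d+6)z^2} j_3(z) + \frac{d-4}{d+6} j_5(z). \qquad (\mathrm{G})
\]
The proof then splits into three cases, using throughout that $z^2 \leq p_{1,1}^2 < d+2$ (Proposition~\ref{propLS}) and $j_l > 0$ on $(0, p_{1,1}]$ for $l = 1, \ldots, 5$ (Lemma~\ref{fact1}). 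For $d \geq 4$, form (G) suffices: the coefficient of $j_3$ is positive because $3d + 18 - 2z^2 > 3d + 18 - 2(d+2) = d + 14 > 0$, while the coefficient of $j_5$ is $(d-4)/(d+6) \geq 0$. For $d = 3$, form (A) reads $\frac{1}{z} j_2 + \frac{8-z^2}{z^2} j_3$, which is manifestly positive since $z^2 < 5 < 8$. For $d = 2$, form (A) reads $\frac{2}{z} j_2 + \frac{3-z^2}{z^2} j_3$; this is a sum of nonnegative terms when $z^2 \leq 3$, and when $z^2 \in (3, p_{1,1}^2]$ I would substitute $j_3 = 4 j_2/z - j_1$ (the $l=2$ case of \eqref{j1}) to rewrite (A) as $\frac{2(6-z^2)}{z^3} j_2 + \frac{z^2-3}{z^2} j_1$, a sum of positive terms since $z^2 < 4 < 6$ and $z^2 > 3$.

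The principal obstacle is the derivation of identity (F): the computation is a cascade of recurrence substitutions, and the choice of which $j_l$ or $j_l'$ to eliminate at each step determines whether one arrives at a sign-controllable form or a hopeless tangle. Once (F) is in hand, the one-step passages to (A) and (G) are routine, and the three-case analysis becomes essentially mechanical.
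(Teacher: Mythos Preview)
Your proof is correct. Your identities (A) and (F) coincide exactly with the paper's \eqref{dsmall} and \eqref{dbigger1}, and your $d=2$ fallback form $\frac{2(6-z^2)}{z^3}j_2 + \frac{z^2-3}{z^2}j_1$ is precisely the paper's \eqref{d2case}. The genuine difference lies in the handling of large $d$: the paper, working from (F), eliminates $j_3$ in favour of $j_4,j_5$ to obtain \eqref{dbigger2}, which forces a further split into $5\le d\le 13$ (where (F) alone suffices since $z^2<d+2\le 15$) and $d\ge 14$ (where one must toggle between (F) on $(0,\sqrt{15}]$ and \eqref{dbigger2} on $(\sqrt{15},p_{1,1}]$). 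Your form (G) instead eliminates $j_4$ in favour of $j_3,j_5$; the resulting $j_3$-coefficient $3d+18-2z^2$ exceeds $d+14>0$ uniformly, so a single argument covers all $d\ge 4$ at once. This is a tidier case structure (three cases versus the paper's four-plus-subcases), bought at the cost of no additional machinery---just a different choice of which term to eliminate in the \eqref{j1} recurrence.
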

\begin{proof} 
We have by \eqref{j4} that
\[
 j_1^{\prime\prime}(z)=-j_1(z)+\frac{d-1}{z}j_2(z),
\]
and so
\begin{align}
 j_1^{(4)}&=-j_1^{\prime\prime}(z)+\frac{d-1}{z}j_2^{\prime\prime}(z)-\frac{2(d-1)}{z^2}j_2^\prime(z)+\frac{2(d-1)}{z^3}j_2(z) \nonumber \\
&=j_1(z)-\frac{2(d-1)}{z}j_2(z)+\frac{d^2-1}{z^2}j_3(z) \label{fact4eq1}
\end{align}
by \eqref{j4} with $l=1$ and $l=2$, and \eqref{j2} with $l=2$. 
When $d=2$, this becomes
\begin{equation}
  j_1^{(4)}(z)=\left(1-\frac{3}{z^2}\right)j_1(z)+\left(\frac{12}{z^3}-\frac{2}{z}\right)j_2(z) \label{d2case}
\end{equation}
by \eqref{j1} with $l=2$. For any $d$, \eqref{fact4eq1} gives us
\begin{align}
 j_1^{(4)}(z)&=\frac{4-d}{z}j_2(z)+\left(\frac{d^2-1}{z^2}-1\right)j_3(z) \
&\quad\text{by \eqref{j1} with $l=2$}\label{dsmall}\\
&=\left(\frac{15}{z^2}-1\right)j_3(z)+\frac{d-4}{z}j_4(z) \\
&\quad\text{by \eqref{j1} with $l=3$}\label{dbigger1}\\
&=\left(\frac{15(d+6)}{z^3}-\frac{10}{z}\right)j_4(z)+\left(1-\frac{15}{z^2}\right)j_5(z)\\
&\quad\text{by \eqref{j1} with $l=4$}\label{dbigger2}.
\end{align}

When $d=2$, then the first term of \eqref{dsmall} is nonnegative on $(0,p_{1,1}]$ by Lemma~\ref{fact2}. The function $j_3$ is positive on $(0,p_{1,1}]$ by Lemma~\ref{fact1}; note that since $d=2$, we have $j_3(z)=J_3(z)$. Thus we have $j_1^{(4)}(z)>0$ when $z\in(0,\sqrt{3}]\cap(0,p_{1,1}]$. However, $p_{1,1}>\sqrt{3}$, so we have only established positivity on $(0,\sqrt{3}]$.

To establish positivity on $(\sqrt{3},p_{1,1}]$ we turn to \eqref{d2case}. The first term is certainly positive on $(\sqrt{3},p_{1,1}]$. The second term is positive when both $J_2>0$ and $z<\sqrt{6}$. Because $p_{1,1}\approx 1.84$ for $d=2$, we have $p_{1,1}<6$ and we are done.

When $d=3$ and $d=4$, we again examine \eqref{dsmall}. Then Lemma~\ref{fact1} together with the argument above give us $j_1^{(4)}>0$ on $(0,\sqrt{d^2-1})\cap(0,p_{1,1}]$. By Proposition~\ref{propLS} we have $p_{1,1}<\sqrt{d+2}$, which for $d=3$ and 4 is less than $\sqrt{d^2=1}$, thus proving the lemma for these $d$.

For dimensions $d\geq 5$, we turn to \eqref{dbigger1}.  The second term is positive on $(o,p_{1,1}]$ for all $d>4$ by Lemma~\ref{fact1}. Since $p_{1,1}<\sqrt{d+2}$ and $\sqrt{d+2}\leq \sqrt{15}$ for $d\leq 13$, we conclude $j_1^{(4)}(z)>0$ on $(0,p_{1,1}]$ for $d=5,\dots,13$. 

Finally, suppose $d\geq 14$ and $z\in(0,p_{1,1}]$. If $z\in(0,\sqrt{15}]$, then $j_1^{(4)}(z)>0$ as above. If $z>\sqrt{15}$, then we examine \eqref{dbigger2}. Here the first term is nonnegative on $[\sqrt{15},p_{1,1}]$. The non-Bessel factor of the second term is positive on $(0,\sqrt{\frac{3}{2}(d+6)}]$ and hence on $(0,p_{1,1}]$.
\end{proof}

Let $d_k$ denote the coefficients of the series expansion for $i_1^{\prime\prime}(z)$, so that
\[
j_1^{\prime\prime}(z) = \sum_{k=1}^\infty (-1)^k d_k z^{2k-1} \quad \text{and} \quad i_1^{\prime\prime}(z) = \sum_{k=1}^\infty d_k z^{2k-1}
\]
by \eqref{jseries} and \eqref{iseries}, where 
\begin{align*}
d_k&=\frac{2k+1}{(k-1)!\Gamma(k+1+d/2)}2^{1-2k-d/2}.
\end{align*}

\begin{lemma}\label{ijbounds} We have the following bounds:
 \begin{align*}
 -d_1 z+d_2 z^3&\geq j_1^{\prime\prime}(z)&\text{for all $z\in\Big[0,\sqrt{3(d+2)/(d+5)}\Big]$,}\\
d_1 z+\frac{6}{5}d_2 z^3&\geq i_1^{\prime\prime}(z)&\text{for all $z\in\Big[0,\sqrt{3}\Big]$.}
 \end{align*}
\end{lemma}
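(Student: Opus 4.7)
The plan is to reduce both inequalities to estimates on the tails of the Maclaurin series of $j_1''$ and $i_1''$, and then pick the right way to bundle the tail terms for each case.

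\emph{First inequality.} Subtracting the truncated polynomial leaves the alternating tail
\[
 -d_1 z + d_2 z^3 - j_1''(z) \;=\; \sum_{k=3}^\infty (-1)^{k+1} d_k z^{2k-1}
 \;=\; \sum_{m=1}^\infty z^{4m+1}\bigl(d_{2m+1} - d_{2m+2} z^2\bigr),
\]
after grouping adjacent terms. It suffices to show each factor $d_{2m+1}-d_{2m+2}z^2$ is nonnegative, i.e.\ $z^2 \leq d_{2m+1}/d_{2m+2}$. A direct computation from the closed form for $d_k$ gives
\[
 \frac{d_k}{d_{k+1}} = \frac{2k(2k+1)(2k+2+d)}{2k+3},
\]
which is increasing in $k$, so the binding constraint comes from $m=1$: $z^2 \leq d_3/d_4 = 42(d+8)/9$. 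Since $3(d+2)/(d+5) < 3 < 42(d+8)/9$ for every $d\geq 2$, the hypothesis is comfortably sufficient and every pair is nonnegative.

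\emph{Second inequality.} Here the tail is termwise positive:
\[
 d_1 z + \tfrac{6}{5} d_2 z^3 - i_1''(z) \;=\; \tfrac{1}{5} d_2 z^3 \,-\, \sum_{k=3}^\infty d_k z^{2k-1}.
\]
The ratio $d_{k+1}/d_k = (2k+3)/[2k(2k+1)(2k+2+d)]$ is \emph{decreasing} in $k$ for $k\geq 2$ (cross-multiplying reduces this to $(2k+3)^2 \geq (2k+1)(2k+5)$, i.e.\ $4\geq 0$), so $d_k \leq d_2\, r^{k-2}$ for $k\geq 2$ with $r = d_3/d_2 = 7/[20(d+6)]$. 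The tail is then dominated by a geometric series:
\[
 \sum_{k=3}^\infty d_k z^{2k-1} \;\leq\; d_2 z^3 \,\frac{rz^2}{1-rz^2}.
\]
The required inequality reduces to $rz^2/(1-rz^2) \leq 1/5$, equivalently $6rz^2 \leq 1$, equivalently $z^2 \leq 1/(6r) = 10(d+6)/21$. For $z^2 \leq 3$ and any $d \geq 2$ this holds, since $10(d+6)/21 \geq 80/21 > 3$.

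\emph{Main obstacle.} The only delicate step is extracting clean monotonicity of $d_{k+1}/d_k$ from the somewhat bulky formula for $d_k$; once that is in hand, pairing for the alternating series and geometric comparison for the positive series are routine algebra. The two series-manipulation choices are dictated by the signs of the tails, and the interval endpoints in the statement come out with a fair amount of slack (the first) or essentially no slack (the second) from the estimates above.
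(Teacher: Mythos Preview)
Your proof is correct. The first inequality follows exactly the paper's line: pair consecutive terms of the alternating tail and use monotonicity of $c_k=d_{k+1}/d_k$. Your threshold $z^2\le d_3/d_4=1/c_3=42(d+8)/9$ is actually sharper than the paper's, which bounds $(1-c_kz^2)\ge(1-c_1z^2)$ and obtains only $z^2\le 1/c_1=6(d+4)/5$; either one covers the stated interval with room to spare.

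The second inequality is where you genuinely diverge from the paper. The paper bounds the positive tail by a somewhat ad hoc explicit computation: it pulls out $(z/2)^3$, substitutes $z=\sqrt{3}$ into the remaining factors, crudely estimates $\Gamma(k+d/2)\ge\Gamma(3+d/2)$, and then recognizes the leftover sum as part of the Taylor series of $e^{3/4}$, arriving at $\sum_{k\ge 3}d_kz^{2k-1}\le\frac{8}{15}d_2z^3(e^{3/4}-7/4)<\frac{1}{5}d_2z^3$. Your argument is cleaner and more structural: from $c_k$ decreasing you get $d_k\le d_2\,c_2^{\,k-2}$, sum the geometric series, and reduce to the transparent condition $z^2\le 1/(6c_2)=10(d+6)/21$, which exceeds $3$ for every $d\ge 2$. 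Your route also yields a $d$-dependent range that improves with dimension, whereas the paper's bound is pinned at $\sqrt{3}$. One minor remark: your parenthetical ``cross-multiplying reduces this to $(2k+3)^2\ge(2k+1)(2k+5)$'' compresses the check that $c_{k+1}/c_k\le 1$; strictly speaking there are three factor-by-factor comparisons, of which that one is the only non-trivial one (the others being $2k<2k+2$ and $2k+2+d<2k+4+d$). This does not affect correctness.
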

\begin{proof} 
Let 
\[
c_k:=\frac{d_{k+1}}{d_k} = \frac{2k+3}{2k(2k+1)(2k+d+2)}.
\]
It is easy to show that $c_k$ is decreasing for $k \geq 1$.

We use the series expansion to first prove the following upper bound on $j_1^{\prime\prime}(z)$ for $z\geq 0$:
\begin{align*}
(-d_1 z+d_2 z^3)-j_1^{\prime\prime}(z) &= \sum_{k=3}^\infty (-1)^{k+1} d_k z^{2k-1} \\
 &=\sum_{\substack{k=3 \\ k \text{odd}}}^\infty(1-c_k z^2)d_k z^{2k-1}\\
 &\geq (1-c_1 z^2)\sum_{\substack{k=3 \\ k \text{odd}}}^\infty d_k z^{2k-1},
\end{align*}
since $c_k$ is decreasing in $k$. Hence $(-d_1 z+d_2 z^3)-j_1^{\prime\prime}(z)\geq0$ when\\ $0\leq z\leq 1/\sqrt{c_1}=\sqrt{6(d+4)/5}$, which is a larger range even than claimed in the first estimate in the lemma.

For $i_1^{\prime\prime}(z)$ we must take a slightly different approach. We will show that on $[0,\sqrt{3}]$,
\[
\frac{1}{5} d_2 z^3 \geq \sum_{k=3}^\infty d_k z^{2k-1},
\]
 and thus
\begin{equation}
 d_1 z+\frac{6}{5}d_2 z^3 \geq i_1^{\prime\prime}(z). \label{ibound}
 \end{equation}
On $[0,\sqrt{3}]$, note that
%%%%%%%%%%%%%%%%%% OVERFUL HBOX ISSUES
\begin{align*}
\sum_{k=3}^\infty d_k z^{2k-1} &= \sum_{k=3}^\infty  \frac{2k+1}{(k-1)!\Gamma(k+1+d/2)}2^{-d/2}\left(\frac{z}{2}\right)^{2k-1}\\
&\leq 2^{-d/2}\left(\frac{z}{2}\right)^3\sum_{k=3}^\infty \frac{2k+1}{(k-1)!(k+d/2)\Gamma(k+d/2)}\left(\frac{\sqrt{3}}{2}\right)^{2k-4} \\
&\qquad\text{since $z\leq\sqrt{3}$}\\
  &\leq \frac{2^{-d/2}}{\Gamma(3+d/2)} \left(\frac{z}{2}\right)^3 \sum_{k=3}^\infty \frac{2k+1}{(k-1)!(k+d/2)}\left(\frac{3}{4}\right)^{k-2}\\
&\qquad\text{since $\Gamma(z)\geq1$ and is increasing on $[2,\infty)$,}\\
  &\leq \frac{d_2}{5}z^3 \sum_{k=2}^\infty \frac{2}{k!} \left(\frac{3}{4}\right)^{k-1}\\
&\qquad\text{by the definition of $d_2$ and taking $k\mapsto k+1$}\\
  &=\frac{8}{15}d_2 z^3\left(e^{3/4}-1-3/4\right)\\
&\qquad\text{by the power series for $e^x$}\\
&\leq \frac{1}{5} d_2 z^3.
\end{align*}
Thus we have obtained our desired bound on $i_1^{\prime\prime}$.\end{proof} 

\subsection*{Bessel functions of the second kind}
Each of the Bessel equations \eqref{besseleqn} and \eqref{modbesseleqn} is a second-order differential equation, and so has another set of solutions. However, these functions are singular at the origin. We proved in Chapter~\ref{spectrum} that the eigenfunctions are smooth; thus either these singular solutions do not appear in the eigenfunctions, or they appear in a linear combination such that the singular terms cancel. In Lemma~\ref{no2ndkind}, we will prove that in fact there is no nontrivial linear combination that meets the smoothness condition.

Ultraspherical Bessel functions of the second kind solve \eqref{besseleqn} and are defined by
\begin{align*}
n_{l}(z) &= z^{-s}N_{s+l}(z)
\end{align*}
with $s=(d-2)/2$. Here $N_\nu(z)$ denotes a Bessel function of the second kind of order $\nu$.  Each $N_\nu(z)$ is linearly independent of $J_\nu(z)$ (see, for example, \cite[p. 358]{AShandbook}), so $n_l(z)$ is linearly independent of $j_l(z)$. The functions $N_l(z)$ are often written as $Y_l(z)$; we use $N_l$ to avoid confusion with the spherical harmonics $\Yl(\thetahat)$.

Ultraspherical modified Bessel functions of the second kind solve \eqref{modbesseleqn} and are defined by
\begin{align*}
k_{l}(z) &= z^{-s}K_{s+l}(z)\\
\text{with}\quad s &= \frac{d-2}{2},
\end{align*}
where $K_\nu(z)$ denotes a modified Bessel function of the second kind of order $\nu$. As before, the $k_l(z)$ are linearly independent of the $i_l(z)$.

We will need several properties of these functions.  For orders $\nu$ that are nonnegative integers, we have the following ascending series: (see, for example, \cite[p. 360, 375]{AShandbook})
\begin{align}
N_\nu(z)&=-\frac{1}{\pi}\sum_{k=0}^{\nu-1}\frac{(\nu-k-1)!}{k!}(z/2)^{2k-\nu}\nonumber\\
&\qquad+\frac{2}{\pi}\ln(z/2)J_\nu(z)-\frac{1}{\pi}\sum_{k=0}^\infty C_\nu(k) (-1)^kz^{2k+\nu}, \label{neven}\\
K_\nu(z)&=\frac{1}{2}\sum_{k=0}^{\nu-1}\frac{(\nu-k-1)!}{k!}(-1)^k(z/2)^{2k-\nu}\nonumber\\
&\qquad+(-1)^{\nu+1}\ln(z/2)I_\nu(z)+(-1)^\nu\frac{1}{2}\sum_{k=0}^\infty  C_\nu(k)(z/2)^{2k+\nu}, \label{keven}
\end{align}
where the coefficients $C_\nu(k)$ are nonzero real number depending on $\nu$ and $k$ as follows:
\[
 C_\nu(k) =\frac{\psi(k+1)+\psi(\nu+k+1)}{k!(\nu+k)!}.
\]
Here $\psi(n)$ is the digamma function with $\psi(1)\approx-.577$ and $\psi(n+1)=\psi(n)+1/n$ for $n\geq 1$. 

For positive noninteger orders $\nu$, we have the following relations between Bessel functions of the first kind and second kind (see, for example, \cite[p. 358, 375]{AShandbook}):
\begin{align}
N_\nu(z)&=\frac{J_\nu(z)\cos(\nu\pi)-J_{-\nu}(z)}{\sin(\nu\pi)}\label{nodd}\\
K_\nu(z)&=\frac{\pi}{2}\frac{I_{-\nu}(z)-I_{\nu}(z)}{\sin(\nu\pi)}\label{kodd},
\end{align}
When $\nu$ is an integer, we have that $J_\nu$ and $J_{-\nu}$ are linearly dependant.

Note that for all dimensions $d\geq 2$ and all nonnegative integers $l$, the functions $n_l(z)$ and $k_l(z)$ are continuous for $z>0$.

In Chapter~\ref{unitball}, we will find the exact solutions of our eigenvalue equation on the unit ball. We will need the following lemma in order to show that Bessel and modified Bessel functions of the second kind do not appear in the radial parts of the smooth solutions. 

\begin{lemma} \label{no2ndkind}
Let $a$, $b$ be positive constants, $a<b$. 

For $d\geq 2$ and all integers $l\geq 2$, there is no nontrivial linear combination 
\[
 R(z)=An_l(az)+Bk_l(bz)
\]
so that $R(z)$ is smooth at $z=0$.

For $d\geq 2$ and $l=1$, there is no nontrivial linear combination
\[
 R(z)=An_1(az)+Bk_1(bz)
\]
so that $R(z)$ is smooth at $z=0$ with $R(0)=0$.

For $d\geq 2$ and $l=0$, there is no nontrivial linear combination
\[
 R(z)=An_0(az)+Bk_0(bz)
\]
so that $R(z)$ is smooth at $z=0$ with $R'(0)=0$.
\end{lemma}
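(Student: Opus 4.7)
The plan is to identify the non-smooth ingredients of $n_l(az)$ and $k_l(bz)$ via the ascending series for Bessel functions of the second kind, and show that the linear equations forced by requiring their cancellation in $R(z)$---together with the extra hypotheses on $R(0)$ or $R'(0)$ in the small-$l$ cases---admit only the trivial solution. Write $\nu = s + l$ and split by parity of $d$.

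For even $d$, $\nu$ is a nonnegative integer and the formulas \eqref{neven}, \eqref{keven} apply. Since $(az)^{-s} J_\nu(az) = j_l(az)$ and $(bz)^{-s} I_\nu(bz) = i_l(bz)$, the coefficient of $\ln z$ in $R(z)$ is exactly $\tfrac{2A}{\pi}\,j_l(az) + (-1)^{\nu+1} B\, i_l(bz)$, and because $\ln z$ is not smooth at $0$ this bracket must vanish identically in $z$. The series \eqref{jseries}, \eqref{iseries} have the same absolute coefficients but opposite sign patterns, so matching the coefficients of $z^l$ and $z^{l+2}$ yields the system
\[
\tfrac{2A}{\pi}\,a^l + (-1)^{\nu+1} B\, b^l = 0, \qquad -\tfrac{2A}{\pi}\,a^{l+2} + (-1)^{\nu+1} B\, b^{l+2} = 0.
\]
Multiplying the first equation by $a^2$ and adding gives $(-1)^{\nu+1} B\, b^l (a^2 + b^2) = 0$, whence $B = 0$ and then $A = 0$. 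So for even $d$ smoothness alone forces $A = B = 0$ for every $l \ge 0$, and the extra hypotheses are automatically satisfied.

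For odd $d$, $\nu$ is a positive half-integer, so \eqref{nodd}, \eqref{kodd} apply and $\cos(\nu\pi) = 0$. The $J_\nu$ and $I_\nu$ pieces contribute only smooth terms via $j_l$ and $i_l$; the obstructive parts come from $J_{-\nu}$ and $I_{-\nu}$, whose power series produce terms $z^{2k - (d+l-2)}$---integer powers, since $\nu$ is a half-integer. Requiring every negative-power coefficient of $R(z)$ to vanish yields, for each integer $k$ with $0 \le k < (d+l-2)/2$, the equation
\[
-A\,(-1)^k\, a^{2k - (d+l-2)} + \tfrac{\pi}{2}\, B\, b^{2k - (d+l-2)} = 0.
\]
When this range contains both $k = 0$ and $k = 1$---equivalently $(d+l-2)/2 > 1$, which covers all $d \ge 5$ and also $d = 3$ with $l \ge 2$---the same $a^2+b^2$ elimination as above forces $A = B = 0$. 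The residual cases are $d = 3$ with $l = 0$ and $l = 1$, where only one cancellation equation appears and there is a one-parameter family of smooth $R$. Here I invoke the explicit closed forms $n_0(z) = -\sqrt{2/\pi}\,\cos(z)/z$, $k_0(z) = \sqrt{\pi/2}\,e^{-z}/z$ and $n_1(z) = -\sqrt{2/\pi}\bigl(\cos(z)/z^2 + \sin(z)/z\bigr)$, $k_1(z) = \sqrt{\pi/2}\,e^{-z}(1/z + 1/z^2)$; a direct Taylor expansion yields, within the surviving family, $R'(0) = \tfrac{B\sqrt{\pi/2}}{2b}(a^2+b^2)$ when $l = 0$ and $R(0) = -\tfrac{B\sqrt{\pi/2}}{2b^2}(a^2+b^2)$ when $l = 1$, each a nonzero multiple of $B$, so the prescribed vanishing forces $B = 0$ and hence $A = 0$. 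The main obstacle is precisely this last step: verifying by explicit Taylor expansion that the ``first free'' coefficient of the surviving one-parameter family is a nonzero multiple of $a^2 + b^2$ and not vanishing accidentally; everything else is a uniform application of the alternating-versus-nonalternating sign pattern between the Bessel and modified Bessel series.
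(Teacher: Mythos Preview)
Your proof is correct. For odd $d$ it follows the same route as the paper: both of you write out the negative-power terms coming from $J_{-\nu}$ and $I_{-\nu}$, observe that whenever at least two such terms are present the resulting linear system (your $a^2+b^2$ elimination, the paper's pair of sign-incompatible equations) forces $A=B=0$, and then handle $d=3$, $l\in\{0,1\}$ by direct computation using the extra hypothesis on $R(0)$ or $R'(0)$. Your use of the closed forms for spherical $n_l$, $k_l$ is a tidy variant of the paper's series computation and gives the same constants.

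For even $d$ your argument is genuinely different and cleaner. The paper again relies on the finite sum of negative powers, which yields two usable equations only when $d+l>4$; it must then treat $d=4$, $l=0$ and $d=2$, $l\le 2$ as separate subcases, each requiring an ad hoc logarithmic analysis. You instead go straight to the $\ln z$ coefficient, which is the entire function $\tfrac{2A}{\pi}j_l(az)+(-1)^{\nu+1}Bi_l(bz)$; since negative integer powers and functions of the form $z^k\ln z$ are independent singularities modulo $C^\infty$, smoothness forces this bracket to vanish identically, and matching its two lowest series coefficients already gives $A=B=0$ for every $l\ge 0$. This eliminates all of the paper's even-$d$ subcases in one stroke. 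The only point worth making more explicit is precisely that independence: one should note that the non-logarithmic part of $R$ consists of a Laurent polynomial plus an analytic function, neither of which can cancel a term $z^k\ln z$, so the log bracket really must vanish term by term.
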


\begin{proof} The Bessel functions $n_l$ and $k_l$ are real-valued, so we may assume the constants $A$ and $B$ are real. Recall $s=(d-2)/2$. We treat the cases of dimension $d$ even and odd separately.

[Part 1.] Let $d\geq 3$ be odd, so that $s$ is an odd multiple of $1/2$. Then for any nonnegative integer $l$, we have $\cos((s+l)\pi)=0$ and $\sin((s+l)\pi)=\pm 1$. Thus by the definitions of $n_l$ and $k_l$, and the identities \eqref{nodd} and \eqref{kodd}, we have
\begin{align*}
R(z)&=An_l(az)+Bk_l(bz)\\
&=A(az)^{-s}N_{s+l}(az)+B(bz)^{-s}K_{s+l}(bz)\\
&=\pm\left(-A(az)^{-s}J_{-s-l}(az)+\frac{\pi}{2}B(bz)^{-s}\Big(I_{-s-l}(bz)-I_{s+l}(bz)\Big)\right)\\
&=\pm \sum_{k=0}^\infty\frac{(z/2)^{2k-2s-l}2^{-s}}{k!\Gamma(k+1-s-l)}
\left(\frac{\pi}{2}Bb^{2k-2s-l}-A(-1)^ka^{2k-2s-l}\right)\\
&\qquad\mp\frac{\pi}{2}Bi_l(bz),
\end{align*}
by the power series expansions \eqref{jseries} and \eqref{iseries} for $J_\nu$ and $I_\nu$. The terms in that contribute to the singularity at the origin are: 
\[
 \sum_{k=0}^{(d-2+l)/2}\frac{(z/2)^{2k-2s-l}}{k!\Gamma(k+1-s-l)}
\left(\frac{\pi}{2}Bb^{2k-2s-l}-A(-1)^ka^{2k-2s-l}\right).
\]
So long as $2-2s-l=4-d-l<0$, the function $R(z)$ has at least two terms $z^{-d+2-l}$ and $z^{4-d-l}$ that are singular at the origin (corresponding to $k=0$ and $k=1$ in the above sum).  Choosing $A$ and $B$ so that the singular terms cancel, we must have
\begin{align}
 A&=\frac{\pi}{2}B\left(\frac{a}{b}\right)^{d-2+l} &\text{for $k=0$}\label{part1a}\\
 A&=-\frac{\pi}{2}B\left(\frac{a}{b}\right)^{d-4+l} &\text{for $k=1$}\label{part1b}.
\end{align}
The constants $a$ and $b$ are positive; thus we must have $A=B=0$ for all $d$, $l$ with $4-d-1<0$; this inequality holds for all odd $d\geq 3$ and nonnegative integers $l$ except for the case of $d=3$ with $l=0$ or $l=1$.

When $d=3$ and $l=0$ or $l=1$, we only have one singular term (corresponding to $k=0$). Assume $B\neq0$, since if $B=0$ then $A=0$ by~\eqref{part1a} and~\eqref{part1b} and so the linear combination is trivial. Without loss of generality we may take $B=1$. Then the linear combination
\[
R(z)=\frac{\pi}{2}\left(\frac{a}{b}\right)^{l+1}n_l(az)+k_l(bz)
\]
is continuous at $z=0$. Differentiating, we see
\begin{align*}
 R'(z)&=\pm\sum_{k=1}^\infty\frac{(2k-1-l)(z/2)^{2k-2-l}}{2k!\Gamma(k+1/2-l)}
\left(\frac{\pi}{2}b^{2k-1-l}-\frac{\pi}{2}\left(\frac{a}{b}\right)^{l+1}(-1)^ka^{2k-1-l}\right)\\
&\qquad+\mp\frac{\pi}{2}bi_l'(bz).
\end{align*}

When $l=0$, we have
\begin{align*}
 R'(0)&=\frac{1}{2\Gamma(3/2)}\left(\frac{\pi}{2}b+\frac{\pi}{2}\left(\frac{a}{b}\right)a\right)-\frac{\pi}{2}bi_l'(0)\\
&=\frac{\pi}{4\Gamma(3/2)}\left(b-\frac{a^2}{b}\right)\neq 0,
\end{align*}
noting $i_0'(0)=i_1(0)=0$ by \eqref{i2} and the series expansion \eqref{iseries} for $i_1(z)$. Because $a\neq b$, we have $R'(0)\neq 0$ for $l=0$. Thus in order to have $R(z)$ smooth at $z=0$ with $R'(0)=0$, we must take both $A$, $B$ to be zero.

When $l=1$, we have
\begin{align*}
R(z)&= -\sum_{k=1}^\infty\frac{(z/2)^{2k-2}}{k!\Gamma(k-1/2)}
\left(\frac{\pi}{2}b^{2k-2}-\frac{\pi}{2}\left(\frac{a}{b}\right)^{2}(-1)^ka^{2k-2}\right)+\frac{\pi}{2}Bi_1(bz),
\end{align*}
Then since $i_1(0)=0$, we have
\[
 R(0)=-\frac{\pi}{2\Gamma(1/2)}\left(1+\frac{a^2}{b^2}\right)
\]
which is nonzero. Thus in order to have $R(z)$ smooth at $z=0$ with $R(0)=0$, we must take both $A$, $B$ to be zero.

[Part 2.] Ascending power series centered about zero with powers increasing by steps of $2$ and with lowest-order term $Cz^k$ will be represented by $\BigO(z^k)$. Thus $\frac{d}{dz}\BigO(z^k)=\BigO(z^{k-1})$ for $k\geq 1$ and $\frac{d}{dz}\BigO(1)=\BigO(z)$.

 Let $d\geq2$ be even; then $s=(d-2)/2$ is an integer. Thus by the definitions of $n_l$ and $k_l$, and the identities \eqref{neven} and \eqref{keven}, we have
\begin{align}
R(z)&=An_l(az)+Bk_l(bz)\nonumber\\
 &=2^{-s}\sum_{k=0}^{s+l-1}\frac{(s+l-k-1)!}{k!}\left(\frac{z}{2}\right)^{2k-2s-l}\left(\frac{-A}{\pi}a^{2k-2s-l}+\frac{B}{2}(-1)^kb^{2k-2s-l}\right)\nonumber\\ &\qquad+\ln(az/2)\frac{2A}{\pi}j_{l}(az)+(-1)^{s+l}\ln(bz/2)Bi_{l}(bz)+\BigO(z^l)\label{part2R}
\end{align}
The singular contributions at $z=0$ come from the terms involving $\ln(z)$ when $l=0$ and when $l\geq 0$ from the sum
\begin{equation}\label{singsum}
 \sum_{k=0}^{s+l-1}\frac{(s+l-k-1)!}{k!}\left(\frac{z}{2}\right)^{2k-2s-l}\left(\frac{-A}{\pi}a^{2k-2s-l}+\frac{B}{2}(-1)^kb^{2k-2s-l}\right),
\end{equation}
provided that at least one of $s$, $l$ is nonzero.
So long as $2-2s-l=4-d-l<0$ and $s$ and $l$ not both zero, the above sum contains at least two terms that are singular at the origin, corresponding to $k=0$ and $k=1$. Choosing $A$ and $B$ so that both of these singular terms cancel, we again find
\begin{align*}
 A&=\frac{\pi}{2}B\left(\frac{a}{b}\right)^{d-2+l} &\text{for $k=0$},\\
 A&=-\frac{\pi}{2}B\left(\frac{a}{b}\right)^{d-4+l} &\text{for $k=1$}.
\end{align*}
The constants $a$ and $b$ are positive; thus we must have $A=B=0$. We have $4-d-l<0$ for all even $d\geq 6$, for $d=4$ when $l\geq 1$, and for $d=2$ when $l\geq 3$. The remaining cases are $d=4$ with $l=0$ and $d=2$ with $l=0$, $1$, and $2$.

We address $d=4$ with $l=0$ first. The sum \eqref{singsum} contains the single term corresponding to $k=0$; thus the singular terms are
\[
\left(\frac{z}{2}\right)^{-2}\left(\frac{-A}{\pi}a^{-2}+\frac{B}{2}b^{-2}\right)
\]
and the logarithmic terms
\[
 \ln(az/2)\frac{2A}{\pi}j_{0}(az)-\ln(bz/2)Bi_{0}(bz) =\ln(z)\left(\frac{2A}{\pi}-B+\BigO(z^2)\right)+\BigO(1),
\]
since $j_0(0)=i_0(0)=1$.
To make $R(z)$ continuous at $r=0$, we must then take
\[
 A = \frac{\pi}{2}\frac{a^2}{b^2}B \qquad\text{and}\qquad  A = \frac{\pi}{2}B.
\]
Since $a<b$, this is only possible when both $A$ and $B$ are zero.

When $d=2$, we have $s=0$ and $n_l(z)=N_l(z)$ and $k_l(z)=K_l(z)$.  We first consider $l=1$ and $2$. In these cases, we only have one singular term, corresponding to $k=0$ in the sum \eqref{singsum}. Assume $B\neq0$; as before we may take $B=1$. Then the linear combination
\[
R(z)=\frac{\pi}{2}\left(\frac{a}{b}\right)^{l}N_l(az)+K_l(bz)
\]
is continuous at $z=0$, and so we have by \eqref{part2R} that
\[
R(z)=\ln(z)\left(\left(\frac{a}{b}\right)^{l}J_{l}(az)+(-1)^{l}\ln(z)I_{l}(bz)\right)+\BigO(z^l).
\]

If $l=1$, we have
\begin{align*}
 R'(z)&=\ln(z)\left(\left(\frac{a}{b}\right)aJ_{1}'(az)-bI_{l}'(bz)\right)+O(1)\\
 &= \frac{1}{2}\ln(z)\left(\frac{a^2}{b}-b+\BigO(z^2)\right)+\BigO(1).
\end{align*}
Since $a<b$, we have $|R'(z)|\to\infty$ as $z\to0$. Thus we must take $B=0$ (and hence $A=0$ in order for $R(z)$ to be continuous with $R'(z)$ continuous at $z=0$.

If $l=2$, we have
\begin{align*}
R(z)&=\ln(z)\left(\left(\frac{a}{b}\right)^{2}J_{2}(az)+I_{2}(bz)\right)+\BigO(z^2),
\end{align*}
and so
\begin{align*}
R'(z)&=\ln(z)\left(\left(\frac{a}{b}\right)^{2}aJ_{2}'(az)+bI_{2}'(bz)\right)+\BigO(z),
\end{align*}
and
\begin{align*}
R''(z)&=\ln(z)\left(\left(\frac{a}{b}\right)^{2}a^2J_{2}^{\prime\prime}(az)+b^2I_{2}^{\prime\prime}(bz)\right)+\BigO(1).
\end{align*}
Because $J_{2}^{\prime\prime}(0)=I_{2}^{\prime\prime}(0)=1/4$, we have that 
as $z\to0$, $R''(z)$ behaves like
\[
 \ln(z)\frac{1}{4}\left(\left(\frac{a}{b}\right)^{2}a^2+b^2\right)+\BigO(1),
\]
and so we find $|R''(z)|\to\infty$ as $z\to0$. Thus we must take $A$ and $B$ both zero in order for $R(z)$ to be continuous with $\Rpp(z)$ continuous.

When $l=0$, the sum \eqref{singsum} is in fact empty, and the logarithmic terms equal
\[
 \ln{z}\left(\frac{2A}{\pi}J_{0}(az)+BI_{0}(bz)\right)+\BigO(1)
\]
Thus $R(z)=AN_0(az)+BK_0(bz)$ is continuous at $z=0$ if
\[
 A=-\frac{\pi}{2}B.
\]
As before, we assume $B=1$. Then
\begin{align*}
 R(z)&=-\ln(z)\left(J_0(az)-I_0(bz)\right)+\BigO(1).
\end{align*}
Thus the first derivative is
\begin{align*}
 R'(z) &=-\ln(z)\left(aJ_0'(az)-bI_0'(bz)\right)+\BigO(z)
\end{align*}
and the second derivative is given by
\begin{align*}
 \Rpp(z) &=-\ln(z)\left(a^2J_0''(az)-b^2I_0''(bz)\right)+\BigO(1)
\end{align*}
Then as $z\to0$, we see $|\Rpp(z)|\to\infty$. Thus we must take $A$ and $B$ both zero in order for $R(z)$ to be continuous with $\Rpp(z)$ continuous.
\end{proof}

\chapter{The unit ball}\label{unitball}
We will use the eigenfunctions of the ball as our trial functions in the proof of the isoperimetric inequality, Theorem~\ref{thm1}. Happily, the full set of solutions for the ball can be found exactly in terms of Bessel and modified Bessel functions, and we can in fact identify the fundamental mode. In particular, the fundamental mode will be proved to have angular dependence.

We will focus on the unit ball, since the solution of our eigenvalue problem for any ball can then be obtained by scaling. We will show in Theorem~\ref{thm2} that all eigenfunctions will be of the form $R_l(r)\Yl(\thetahat)$, where $R_l$ is a linear combination (depending on $\tau$) of ultraspherical Bessel and modified Bessel functions of order $l$, and $\Yl$ is a spherical harmonic. 

\section*{Spherical harmonics}
In the case where $\Omega$ is the ball, it is natural to consider spherical coordinates. Let $r$ be the radius and $\thetahat$ be the remaining angular information. Consider Laplace's equation $\Delta f = 0$, with $f$ a function on $\RR^d$. The Laplacian can be written in spherical coordinates as
\[
\Delta = \partial_{rr}+\frac{d-1}{r}\partial_r+\frac{1}{r^2}\Delta_S,
\]
where we give the name $\Delta_S$ to the angular part of the Laplacian. Separating variables so that $f=R(r)Y(\thetahat)$, we obtain
\[
 R''+\frac{d-1}{r}R'-\frac{l(l+d-2)}{r^2}R=0 \qquad\text{and}\qquad \Delta_S Y=-l(l+d-2)Y.
\]
Using our earlier notation for the surface Laplacian, we have $\Delta_{\partial\Omega}=\frac{1}{R^2}\Delta_S$, when $\Omega$ is the ball of radius $R$. The parameter $l$ appearing in the separation constant $l(l+d-2)$ must be an nonnegative integer in order for solutions to exist. The solutions to $\Delta_S Y = -l(l+d-2)Y$ are called the \emph{spherical harmonics}. For each $l$, we choose a spanning set $\{\Yl\}$ of such solutions that are orthonormal with respect to the $L^2(\partial\BB)$ norm. Because the eigenvalues are real, the $\Yl$ may be chosen to be real-valued. However, they are traditionally chosen to be complex-valued, and so will be treated as possibly such in the proof of this chapter's main result, Theorem~\ref{thm2}.

\section*{Factoring the eigenfunction equation}
\begin{prop} Let $\tau>0$ and $\omega$ be any positive eigenvalue of the free plate when $\Omega$ is the unit ball. Then the corresponding eigenfunctions in Theorem~\ref{thm1} can be written in the form $R(r)\Yl(\thetahat)$, where $\Yl$ is a spherical harmonic of some integer order $l$ and
\[
R(r) = j_l(ar)+\gamma i_l(br),
\]
where $a$ and $b$ are positive constants depending on $\tau$ and $\omega$ as follows: $b^2-a^2=\tau$ and $a^2b^2=\omega$, and $\gamma$ is a real constant given by
\[
 \gamma=\frac{-a^2j_l^{\prime\prime}(a)}{b^2i_l^{\prime\prime}(b)}.
\]
\end{prop}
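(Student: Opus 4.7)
The plan has four steps in sequence: algebraic factorization of the fourth-order operator, separation of variables in spherical coordinates, removal of singular radial solutions via smoothness at the origin, and imposition of the bending-moment boundary condition $Mu=0$.

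First, given $\tau>0$ and eigenvalue $\omega>0$, I set $a^2=\tfrac12(-\tau+\sqrt{\tau^2+4\omega})$ and $b^2=\tfrac12(\tau+\sqrt{\tau^2+4\omega})$; both are positive, $b^2-a^2=\tau$, $a^2b^2=\omega$, and the operator identity
\[
\Delta^2-\tau\Delta-\omega \;=\; (\Delta+a^2)(\Delta-b^2)
\]
holds. The factors commute and $a^2+b^2>0$, so any solution $u$ splits uniquely as $u=u_H+u_M$ with $\Delta u_H=-a^2u_H$ (Helmholtz) and $\Delta u_M=b^2u_M$ (modified Helmholtz); explicitly $u_M=(a^2+b^2)^{-1}(\Delta+a^2)u$ and $u_H=u-u_M$.

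Second, the plate operator commutes with the spherical Laplacian $\Delta_S$, so I may choose a basis of the $\omega$-eigenspace in which every basis vector has the form $u=R(r)\Yl(\thetahat)$ for a single order $l\geq0$. The radial part of $u_H$ then solves the ultraspherical Bessel equation \eqref{besseleqn} in the variable $ar$, contributing the pair $j_l(ar),\,n_l(ar)$; the radial part of $u_M$ solves \eqref{modbesseleqn} in $br$, contributing $i_l(br),\,k_l(br)$. Hence
\[
R(r) \;=\; A\,j_l(ar)+A'n_l(ar)+B\,i_l(br)+B'k_l(br).
\]
Smoothness of $u$ on the closed ball (Proposition~\ref{regprop}) forces $A'=B'=0$: for $l\geq 2$ this is direct smoothness of $R$ at the origin; for $l=1$ one also needs $R(0)=0$ so that $R(r)Y_1$ extends smoothly through the origin in $\RR^d$; for $l=0$ one also needs $R'(0)=0$. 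These are exactly the hypotheses of Lemma~\ref{no2ndkind}, which kills any nontrivial combination of second-kind pieces and leaves $R(r)=A\,j_l(ar)+B\,i_l(br)$.

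Finally, I impose $Mu=u_{rr}(1)=0$ from Proposition~\ref{ballBC}, which gives $A\,a^2 j_l''(a)+B\,b^2 i_l''(b)=0$. Every coefficient of the series \eqref{iseries} for $i_l$ is positive, so $i_l''(b)>0$; thus any nontrivial $R$ must have $A\neq0$, and normalizing $A=1$ yields $B=\gamma=-a^2j_l''(a)/(b^2 i_l''(b))$, as claimed. (The remaining free boundary condition $Vu=0$ is not used here; it plays the role of a secular equation selecting which $\omega$ actually occur.) The main obstacle in this program is the smoothness step: the second-kind Bessel functions $n_l,k_l$ are genuine radial solutions and could a priori combine so that their singularities cancel, which is precisely the scenario that Lemma~\ref{no2ndkind}, with its delicate low-$l$ vanishing hypotheses, was written to exclude.
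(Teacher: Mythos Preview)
Your proposal is correct and follows essentially the same approach as the paper: factor the operator as $(\Delta+a^2)(\Delta-b^2)$, use commutation with $\Delta_S$ to reduce to a single spherical-harmonic order, solve the two radial Bessel equations, invoke Lemma~\ref{no2ndkind} (with the extra $R(0)=0$ or $R'(0)=0$ conditions for $l=1,0$) to kill the second-kind pieces, and fix $\gamma$ from $Mu=0$. Your explicit projection $u_M=(a^2+b^2)^{-1}(\Delta+a^2)u$ and your observation that $i_l''(b)>0$ forces $A\neq 0$ are minor sharpenings of the paper's presentation, but the argument is the same.
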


\begin{proof}
We first show that eigenfunctions can be written as a product of a radial function with a spherical harmonic, and then give the exact form of the radial part.

Write $A:=\Delta^2-\tau\Delta$. By Proposition~\ref{spect}, each eigenvalue $\omega$ has finite multiplicity, and so the corresponding space of eigenfunctions $X_\omega$ is finite-dimensional. Because $\Delta_S$ is independent of $r$, it commutes with the Laplacian $\Delta$ and hence with our operator $A:=\Delta^2-\tau\Delta$. Thus $\Delta_S$ maps $X_\omega$ into itself. The operator $\Ls$ is symmetric, and so diagonalizable on the finite-dimensional space $X_\omega$. The eigenfunctions of $\Ls$ on $\partial\BB$ are the spherical harmonics; on $\BB$ the eigenfunctions have the form $R(r)\Yl(\thetahat)$. Thus we can choose our eigenfunctions of $A$ to have this form. That is, $A$ and $\Ls$ are simultaneously diagonalizable.

To find the precise form of $R$, we factor the eigenvalue equation \eqref{maineq}, obtaining
\begin{equation}
(\Delta+a^2)(\Delta-b^2)u = 0, \label{PDEfactored}
\end{equation}
where $a$ and $b$ are positive real numbers satisfying $b^2=a^2+\tau$ and $\omega = a^2(a^2+\tau)$. That is, $a^2=\sqrt{(\tau/2)^2+\omega}-\tau/2$ and $b^2=\sqrt{(\tau/2)^2+\omega}+\tau/2$. The eigenfunctions $u$ will then be linear combinations of the solutions $v$ and $w$ of each factor:
\begin{equation}\label{factoredpieces}
 (\Delta+a^2)v=0 \qquad\text{and}\qquad(\Delta-b^2)w=0.
\end{equation}
Each of these is separable in spherical coordinates, with angular equation \\$\Delta_S Y=-l(l+d-2)Y$ for some nonnegative integer $l$. The radial equation for $v$ is a rescaling of the ultraspherical Bessel equation \eqref{besseleqn} with order $l$ and the radial equation for $w$ is a rescaling of the ultraspherical modified Bessel equation \eqref{modbesseleqn} with order $l$, hence
\[
 v=\Big(Aj_{l_1}(ar)+Bn_{l_1}(ar)\Big)Y_{l_1}\qquad\text{and}\qquad w=\Big(Ci_{l_2}(br)+Dk_{l_2}(br)\Big)Y_{l_2},
\]
for some nonnegative integers $l_1$, $l_2$ and real constants $A$, $B$, $C$, and $D$.
From the diagonalization argument above, we know $u=R(r)\Yl(\thetahat)$, so all the orders must agree: $l=l_1=l_2$. Thus solutions of the eigenvalue equation \eqref{maineq} have the form
\[
u(x)=R(r)T(\hat{\theta}) = \Big(Aj_l(ar)+Bn_l(ar)+Ci_l(br)+Dk_{l_2}(br)\Big)Y_l(\hat{\theta}).
\]
However, we have from Proposition~\ref{regprop} that the eigenfunctions are smooth on $\BB$. The spherical harmonics $\Yl$ have no radial dependence; thus we must have the radial part $R(r)$ be smooth for $r\in[0,\infty)$. When $l=0$, the spherical harmonic $Y_0$ is constant, and we must also require $R'(0)=0$ in order for $u$ to be smooth. When $l=1$, the spherical harmonics $Y_1$ can be given by $x_i/r$, where $x_i$ are the coordinate functions. Then along the $x_i$-axis, $R(r)Y_1(\thetahat)=R(r)x_i/r=R(r)\sign(x_i)$. This function is continuous at the origin only if $R(r)$ vanishes at $r=0$. By Lemma~\ref{no2ndkind}, there is no nontrivial linear combinination of Bessel functions of the second kind which satisfies these conditions; thus $B$ and $D$ are both zero. Denote $C/A$ by the constant $\gamma$; then we have
\[
u(x)=R(r)T(\hat{\theta}) = \Big(j_l(ar)+\gamma i_l(br)\Big)Y_l(\hat{\theta}).
\]
The constant $\gamma$ must be chosen so that $u$ satisfies the natural boundary condition $u_{rr}=0$ at $r=1$ (see \eqref{BCb1} in Proposition~\ref{ballBC}); hence we have
\[
 \gamma =\frac{-a^2j_l^{\prime\prime}(a)}{b^2i_l^{\prime\prime}(b)},
\]
and so $\gamma$ is real-valued.
\end{proof}

\section*{The fundamental mode of the ball}
In this section, we identify the fundamental mode of the ball for positive tension, proving:

\begin{thm} \label{thm2} For $\tau>0$, the fundamental mode of the unit ball has angular dependence, and has the form
\[
u_1(r,\thetahat)=\Big(j_1(ar)+\gamma i_1(br)\Big)Y_1(\thetahat),
\]
with $a$, $b$, $\gamma$ real constants, with $a$ and $b$ positive and depending on $\tau$ and $\omega$ as follows: $b^2-a^2=\tau$ and $a^2b^2=\omega$, and $\gamma$ given by
\[
 \gamma=\frac{-a^2j_l^{\prime\prime}(a)}{b^2i_l^{\prime\prime}(b)}.
\]
Thus in dimension 2,
\[
 u_1(r,\thetahat)=\Big(J_1(ar)+\gamma I_1(br)\Big)\begin{cases}\sin(\theta)\\ \cos(\theta)\end{cases}.
\]
\end{thm}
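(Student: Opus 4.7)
Plan. The preceding Proposition shows every eigenfunction on $\BB$ has the form $R_l(r)Y_l(\hat\theta)$ with $R_l(r) = j_l(ar)+\gamma_l i_l(br)$ for some nonnegative integer $l$, with $\gamma_l$ fixed by $Mu|_{r=1}=0$. The remaining boundary condition $Vu|_{r=1}=0$ becomes a transcendental equation $F_l(a)=0$ whose smallest positive root determines the smallest eigenvalue $\omega_l^\star$ in the $l$-th angular branch (for $l=0$ we exclude the trivial constant solution). Because the spectrum decomposes along angular modes, the fundamental tone satisfies $\omega_1(\BB) = \min_{l\geq 0}\omega_l^\star$, and Theorem~\ref{thm2} asserts the minimum is attained at $l=1$.

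My plan is to bracket the branches. For $l=1$, the linear coordinate trial functions used in Lemma~\ref{wbounds} are exactly $l=1$ modes (since $x_k = r\cdot(x_k/r)$ with $x_k/r$ a spherical harmonic of degree one), so restricting the Rayleigh--Ritz characterization to the $l=1$ subspace gives $\omega_1^\star \leq \tau(d+2)$. It therefore suffices to show $\omega_l^\star > \tau(d+2)$ for $l=0$ and $l\geq 2$. Since $\omega = a^2(a^2+\tau)$, this reduces to showing that the smallest positive root of $F_l$ lies above $\sqrt{d+2}$ (so $\omega \geq (d+2)(d+2+\tau) > \tau(d+2)$), which in turn follows if $F_l$ has a definite sign on $(0,\sqrt{d+2}]$.

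To run the sign analysis I would compute $F_l(a)$ explicitly from $Vu|_{r=1}=0$ using $\Delta_S Y_l=-l(l+d-2)Y_l$, $\Delta j_l(ar) = -a^2 j_l(ar)$, $\Delta i_l(br) = b^2 i_l(br)$, and the formula for $\gamma_l$. For $l\geq 2$ I feed in: Proposition~LS, in the sharpened form verified in Lemma~\ref{fact2} (and monotone in $l$), so that $p_{l,1}^2 > d+2$ and hence $j_l, j_l' > 0$ on $(0,\sqrt{d+2}]$ by Lemmas~\ref{fact1} and \ref{fact2}; the recurrences~\eqref{j4} and \eqref{i4} to pin down the signs of $j_l''$ and $i_l''$ and hence of $\gamma_l$; and the blanket positivity of $i_l$ and all its derivatives from the series~\eqref{iseries}. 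These combine to give $F_l$ a definite sign on the interval. For $l=0$, the term $\Delta_S Y_0=0$ collapses $F_0(a)$ to a constant times $b j_0'(a) - \gamma_0 a i_0'(b)$; I then use $j_0'=-j_1$, $i_0'=i_1$ from~\eqref{j2}, \eqref{i2}, together with the positivity of $j_1, i_1$ and control on $\gamma_0$ (using Lemma~\ref{ijbounds} if a quantitative estimate is needed), to reach the analogous conclusion.

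The main obstacle will be the sign-tracking in the $l\geq 2$ case, which requires simultaneously controlling $j_l, j_l', j_l''$ and $i_l, i_l', i_l''$ as well as their ratio through $\gamma_l$; this is precisely why Chapter~\ref{bessel} assembles so many Bessel sign and bound lemmas. Once the sign analysis is complete, $\omega_l^\star > \tau(d+2) \geq \omega_1^\star$ for every $l\neq 1$, so the fundamental mode lies in the $l=1$ branch. Its explicit form is then read off directly from the preceding Proposition, and in dimension $d=2$ the ultraspherical $j_1, i_1$ reduce to the ordinary $J_1, I_1$ while $Y_1$ is spanned by $\sin\theta, \cos\theta$, yielding the final display stated in the theorem.
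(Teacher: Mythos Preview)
Your approach is genuinely different from the paper's, and the $l\geq 2$ step has a real gap. The paper does \emph{not} analyze the transcendental equation $W_l(a)=0$ for $l\geq 2$ at all. Instead, it fixes an arbitrary smooth radial $R$ and shows directly that the numerator $N[RY_l]$ of the Rayleigh quotient is an increasing function of $k=l(l+d-2)$ for $l\geq 1$ (the denominator being independent of $l$). After completing the square, the only delicate term is $k(k-d-1/2)\int R^2 r^{d-5}\,dr$, which is negative at $l=1$ and positive for $l\geq 2$; this handles every $l\geq 2$ in one stroke, uniformly in $R$ and in $\tau$. Your plan instead asks for a sign analysis of $W_l(a)$ on $(0,\sqrt{d+2}\,]$ for each $l\geq 2$, but $W_l(a)$ (formula (MVl) in the paper) mixes $j_l,\,j_l',\,j_l'',\,i_l,\,i_l',\,i_l''$ with the coupling $k$, and the signs do not all line up: for instance $j_l''(z)=\big(\tfrac{l^2-l}{z^2}-1\big)j_l(z)+\tfrac{d-1}{z}j_{l+1}(z)$ changes character at $z=\sqrt{l^2-l}$, which for $l=2$ is $\sqrt{2}<\sqrt{d+2}$, so on the outer part of your interval the first summand is negative while the second is positive. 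Nothing in your sketch explains how those competing terms are controlled, let alone uniformly in $l$. The paper's monotonicity-in-$l$ argument is precisely the missing idea here.

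For the $l=0$ versus $l=1$ comparison, your idea and the paper's are closer in spirit (both reduce to a sign analysis of the boundary determinant), but the paper works on the \emph{smaller} interval $(0,a_\infty)$ with $a_\infty=p_{1,1}$, where $j_1,\,j_1'>0$ by definition, making $W_0>0$ trivial and then showing $W_1$ changes sign there. Your interval $(0,\sqrt{d+2}\,]$ strictly contains $(0,a_\infty)$ since $a_\infty^2<d+2$; beyond $a_\infty$ one has $j_1'<0$, so $W_0(a)=a^4b\,j_1'(a)\,i_1(b)+ab^4\,i_1'(b)\,j_1(a)$ has terms of opposite sign, and you have not said how to resolve this. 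The paper's shortcut---locate a root of $W_1$ in $(0,a_\infty)$ and observe $W_0$ has none there---is cheaper because it never leaves the region where all the relevant Bessel quantities are positive.

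Summary: your bracketing via $\omega_1^\star\leq \tau(d+2)$ is fine, but the claim that a uniform sign argument on $(0,\sqrt{d+2}\,]$ finishes both $l=0$ and $l\geq 2$ is not substantiated; the paper bypasses the $l\geq 2$ sign analysis entirely with a Rayleigh-quotient monotonicity argument, and handles $l=0$ on a shorter interval where all signs are forced.
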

\begin{proof}
The proof will have two parts. First we show that for any radial function $R(r)$, the Rayleigh quotient $Q[R\Yl]$ is minimized when $l=1$, among all $l\geq1$. Then we show that of all nonconstant eigenstates with $l=0$ and $l=1$, the lowest eigenvalue corresponds to $l=1$. Note that when $l=0$, the spherical harmonic $Y_0$ is the constant function, and so $l=0$ corresponds to purely radial modes.

\emph{[Part 1.]} We will show that for any fixed smooth radial function $R$, the Rayleigh quotient $Q[R\Yl]$ is an increasing function in $l$ for all $l\geq 1$. Then by the variational characterization of eigenvalues, we see that the lowest eigenvalue corresponding to an eigenfunction with angular dependence (i.e., $l\geq 1$) occurs when $l=1$.

Considering the numerator and denominator separately, we will use the $L^2$-orthonormality of the spherical harmonics to simplify the angular parts of the integrals.

The denominator of our Rayleigh quotient is, for $u=R\Yl$,
\[
\int_\BB |R\Yl|^2\,dx = \int_0^1R^2\,r^{d-1}\,dr,
\]
and so is independent of $l$. So it suffices to show that the numerator is an increasing function of $l$ for $l\geq 1$.

Recall the numerator of the Rayleigh Quotient is
\[
N[u] = \int_\Omega |D^2u|^2+\tau |Du|^2 \, dx.
\]

We use the pointwise identity of Fact~\ref{ptwise} to rewrite the Hessian term as :
\begin{align}\label{hessstuff}
|D^2u|^2 &= \frac{1}{2}\Big(\Delta(|Du|^2) -D(\Delta u)\cdot D\ubar-D(\Delta\ubar)\cdot Du\Big)
\end{align}
Because our region $\Omega$ is the unit ball, we may use spherical coordinates, noting $\frac{\partial u}{\partial n} = u_r$. Recall that
\begin{equation}
D u = u_r\rhat + \frac{1}{r}\Ns u,
\end{equation}
where $\rhat = x/r$ is the unit normal. Recall from the derivation of the boundary conditions on the ball from Chapter~\ref{spectrum} that $\frac{1}{r}\Ns=\sgrad$ is the surface gradient and $\frac{1}{r^2}\Ls=\sdiv\sgrad$ is the Laplacian on the boundary of the ball. 

Note by the Divergence Theorem on $\partial\BB$, we have for any function $f$,
\begin{equation}\label{laptozero}
  \surfintB \frac{1}{r^2}\Ls f\,dS=\surfintB \mydiv_{\partial\BB}\Big(\grad_{\BB}f\Big)\,dS=0.
\end{equation}

Further exploiting orthonormality of the $\Yl$, we see that
\begin{equation}\label{laprayleigh}
 \surfintB |\Ns\Yl|^2\,dS=l(l+d-2)=:k.
\end{equation}
Thus when $u=R\Yl$, we can rewrite \eqref{hessstuff} as follows:
\begin{align*}
\frac{1}{2}&\int_\BB\Big(\Delta(|Du|^2) -D(\Delta u)\cdot D\ubar-D(\Delta\ubar)\cdot Du\Big)\,dx\\
&=\frac{1}{2}\int_\BB\left(\frac{\partial^2}{\partial r^2}+\frac{d-1}{r}\frac{\partial}{\partial r}+\frac{1}{r^2}\Ls\right)\left((R')^2|\Yl|^2+\frac{R^2}{r^2}|\Ns\Yl|^2\right)\\
&\qquad-D\ubar\cdot D\left(u_{rr}+\frac{d-1}{r}u_r-\frac{k}{r^2}u\right)-Du\cdot D\left(\ubar_{rr}+\frac{d-1}{r}\ubar_r-\frac{k}{r^2}\ubar\right)\,dx\\
&\qquad\text{(noting $\Ls\Yl=-k\Yl$)}\\
&=\frac{1}{2}\int_\BB\left(\frac{\partial^2}{\partial r^2}+\frac{d-1}{r}\frac{\partial}{\partial r}\right)\left((R')^2|\Yl|^2+\frac{R^2}{r^2}|\Ns\Yl|^2\right)\\
&\qquad -D\ubar\cdot D\left(u_{rr}+\frac{d-1}{r}u_r-\frac{k}{r^2}u\right)-Du\cdot D\left(\ubar_{rr}+\frac{d-1}{r}\ubar_r-\frac{k}{r^2}\ubar\right)\,dx,
\end{align*}
with this last by noting that \eqref{laptozero} gives us
\[
 \int_\BB\frac{1}{r^2}\Ls\left((R')^2|\Yl|^2+\frac{R^2}{r^2}|\Ns\Yl|^2\right)\,dx=0.
\]
Expanding the integrands, we see \eqref{hessstuff} becomes
\begin{align*}
\int_\BB&|\Yl|^2\left((\Rpp)^2+\frac{k+d-1}{r^2}(\Rp)^2-\frac{2k}{r^3}R\Rp\right)\\
&\qquad+|\Ns\Yl|^2\left(\frac{(\Rp)^2}{r^2}-\frac{4}{r^3}R\Rp+\frac{k-d+4}{r^4}R^2\right)\,dS
\end{align*}
Then integrating the above over $\BB$ using \eqref{laprayleigh} and the orthonormality of the $\Yl$, we obtain
\begin{align*}
\int_\BB&|D^2u|^2\,dx\\ &=\int_0^1\left((\Rpp)^2+\frac{2k+d-1}{r^2}(\Rp)^2-\frac{6k}{r^3}R\Rp+\frac{k(k-d+4)}{r^4}R^2\right)r^{d-1}\,dr\\
&=\int_0^1\left((\Rpp)^2+\frac{d-1}{r^2}(\Rp)^2+\frac{2k}{r^4}\left(r\Rp-\frac{3}{2}R\right)^2+\frac{k(k-d-1/2)}{r^4}R^2\right)r^{d-1}\,dr
\end{align*}
with this last equality by completing the square.

We now examine the gradient term in the numerator of the Rayleigh quotient:
\begin{align*}
\int_\BB|Du|^2\,dx &= \int_\BB\left(|u_r|^2+\frac{1}{r^2}|\Ns u|^2\right)\,dx\nonumber\\
&=\int_\BB\left((\Rp)^2|\Yl|^2+\frac{R^2}{r^2}|\Ns\Yl|^2\right)\,dx\nonumber\\
&=\int_0^1\left((\Rp)^2+\frac{k}{r^2}R^2\right)r^{d-1}\,dr,
\end{align*}
again by \eqref{laprayleigh}.

Combining these results, the numerator of the Rayleigh quotient can be now written with all $k$-dependence (and hence $l$-dependence) explicit:
\begin{align*}
N[u]&=\int_0^1\left(\frac{2k}{r^4}\left(r\Rp-\frac{3}{2}R\right)^2+\frac{k(k-d-1/2)}{r^4}R^2+\tau\frac{kR^2}{r^2}\right)r^{d-1}\,dr\\
&\qquad+\int_0^1\left((\Rpp)^2+\frac{d-1}{r^2}(\Rp)^2+\tau(\Rp)^2\right)r^{d-1}\,dr
\end{align*}
Recall $\tau>0$; then the above is increasing with $k$ for $k\geq d+1/2$. Recall $k=l(l+d-2)$ is increasing as a function of $l$; then all terms involving $k$ are increasing functions of $l$ for $l\geq 2$. If $l=1$, the expression $k(k-d-1/2)$ becomes $-3(d-1)/2$, which is negative for all dimensions under consideration. If $l=2$, we find $k(k-d-1/2)= 2d(d-1/2)>0$. Thus each term involving $l$ is increasing as a function of $l$ for all $l\geq 1$. 

Thus for any fixed radial function $R$, the numerator $N[R\Yl]$ is an increasing function of $l$ for $l\geq 1$.

\emph{[Part 2.]} We now show that the lowest eigenvalue corresponding to an eigenfunction of the form $u_l=\Big(j_l(ar)+\gamma i_l(br)\Big)\Yl$ with $l=1$ is less than the lowest positive eigenvalue for $l=0$.  

Recall $b = \sqrt{a^2+\tau}$. Let $\ainf$ denote the first positive zero of $j_1'(a)$. All eigenfunctions $u$ satisfy the natural boundary conditions, $Mu=0$ and $Vu=0$ on $\partial\Omega$, as given in \eqref{BCb1} and \eqref{BCb2} for the ball. Since all eigenfunctions are linear combinations of $j_l(ar)\Yl(\thetahat)$ and $i_l(br)\Yl(\thetahat)$, we must have some nontrivial linear combination satisfy the homogeneous linear equations
\[
 \begin{cases} Mu=0\\Vu=0.
  \end{cases}
\]
Thus we need the determinant
\[
W_l(a):=\det\begin{pmatrix}
Mj_l(ar) & Mi_l(br) \\
Vj_l(ar) & Vi_l(br)\end{pmatrix}_{r=1}
\]
to vanish. Using the form of natural boundary conditions for the ball given in Proposition~\ref{ballBC}, we have
\begin{equation} \label{Ms}
 Mj_l(ar)=a^2j_l^{\prime\prime}(ar) \qquad\text{and}\qquad Mi_l(ar)=b^2i_l^{\prime\prime}(br).
\end{equation}
The $j_l(ar)$ and $i_l(br)$ are rescaled ultraspherical Bessel and modified Bessel functions, so by the factorization \eqref{factoredpieces}, we have
\[
 \Delta j_l(ar)\Yl(\thetahat)=-a^2j_l(ar)\Yl(\thetahat) \qquad\text{and}\qquad \Delta i_l(br)\Yl(\thetahat)=b^2i_l(br)\Yl(\thetahat).
\]
Then noting $r=1$ on $\partial\BB$, the ``$V$`` boundary condition terms from in \eqref{BCb2} can be rewritten as follows:
\begin{align*}
 Vj_l(a)&= \tau aj_l^\prime(a)+k\Big(aj_l^\prime(a)-j_l(a)\Big)+a^3j_l'(a)\\
 Vi_l(b)&= \tau bi_l^\prime(b)+k\Big(bi_l^\prime(b)-i_l(b)\Big)-b^3i_l'(b)\\
\end{align*}

Combining the above with \eqref{Ms} and substituting $\tau=b^2-a^2$, we find
\begin{align}
W_l(a) &= a^2j_l^{\prime\prime}(a)\Big(-a^2bi_l^{\prime}(b)+k(bi_l^\prime(b)-i_l(b))\Big)\nonumber\\
&\quad-b^2i_l^{\prime\prime}(b)\Big(ab^2j_l^\prime(a)+k(aj_l^\prime(a)-j_l(a))\Big)\label{MVl}.
\end{align}

Given $\tau$, the roots of $W_l(a)$ will determine the eigenvalues by the relation $\omega=a^2(a^2+\tau)$. The parameter $\tau$ is positive, so $\omega$ increases with $a$. Note that if $a=0$ is a root, it corresponds to $\omega=0$.

 Therefore to show that the lowest nonzero eigenvalue corresponds to $l=1$ and not $l=0$, we show that the first nonzero root of $W_1(a)$ is less than the first nonzero root of $W_0(a)$.

First we consider $l=0$. Here $k = 0$, so we look for solutions to:
\begin{align*}
0&=W_0(a)\\ &=a^2j_0^{\prime\prime}(a)\Big(-a^2bi_0^{\prime}(b)\Big)-b^2i_0^{\prime\prime}(b)\Big(ab^2j_0^\prime(a)\Big)\\
&=a^4bj_1^\prime(a)i_1(b)+ab^4i_1^\prime(b)j_1(a),
\end{align*}
by \eqref{j2} and \eqref{i2}. The functions $i_0(b)$ and $i_0^\prime(b)$ are positive for $b>0$, as noted in Chapter~\ref{bessel} by the power series expansion \eqref{iseries}. Similarly, $j_1(a)$ and $j_1'(a)$ are positive on $(0,\ainf)$ by Lemma~\ref{fact1.5}, so $W_0(a) > 0$ on $(0,\ainf)$.

Now consider $l=1$. The constant $k=d-1$, so we have
\begin{align}
W_1(a)&=a^2j_1^{\prime\prime}(a)\Big(-a^2bi_1^\prime(b)+(d-1)(bi_1^\prime(b)-i_1(b))\Big)\nonumber\\
&\quad-b^2i_1^{\prime\prime}(b)\Big(ab^2j_1^\prime(a)+(d-1)(aj_1^\prime(a)-j_1(a))\Big)\label{W1a}\\
&=a^2bj_1^{\prime\prime}(a)\Big(-a^2i_1^\prime(b)+(d-1)i_2(b)\Big)\nonumber\\
&\quad-ab^2i_1^{\prime\prime}(b)\Big(b^2j_1^\prime(a)-(d-1)j_2(a)\Big),\label{W1b}
\end{align}
by the Bessel identities \eqref{j2} and \eqref{i2}. As $a\rightarrow 0$, the first term in \eqref{W1b} behaves like
\[
a^2\sqrt{\tau}j_1^{\prime\prime}(a)\Big((d-1)i_2(\sqrt{\tau})\Big)
\]
and so its sign is determined by that of $j_1^{\prime\prime}(a)$. By Lemma~\ref{fact3}, $j_1^{\prime\prime}(a)$ is negative for all $a\in(0,\ainf]$. Hence the first term of $W_1(a)$ is negative near $a=0$.

Similarly, as $a\rightarrow0$, we see $j_2(a)\rightarrow 0$, and so the second term behaves like
\[
-a\tau^2i_1^{\prime\prime}(\sqrt{\tau})j_1^\prime(0),
\]
which is negative by Lemma~\ref{fact1.5}. Therefore $W_1(a) < 0$ near $a=0$.

At $a = \ainf$, we have $j_1^\prime(\ainf) = 0$ by definition of $\ainf$; also note $j_1(\ainf)>0$ and $j_1^{\prime\prime}(\ainf)<0$ by Lemma~\ref{fact1} and~\ref{fact3}. Write $b_\infty=\sqrt{\ainfsq+\tau}$. By Proposition~\ref{propLS}, we have $\ainfsq>d>d-1$ for $d\geq 3$; for $d=2$, we have $\ainfsq \approx 1.84^2>d-1$. Thus $\ainfsq-(d-1)>0$, and so \eqref{W1a} gives
\begin{align*}
W_1(\ainf)&= -\ainfsq j_1^{\prime\prime}(\ainf)\Big(b_\infty(\ainfsq-(d-1))i_1^\prime(b_\infty) +(d-1)i_1(b_\infty)\Big)\\
&\qquad +b_\infty^2i_1^{\prime\prime}(b_\infty)(d-1)j_1(\ainf).
\end{align*}
Hence both terms in $W_1(\ainf)$ and since $W_1$ is continuous, it must have a zero in $(0,\ainf)$. Thus the lowest nonzero root of $W_l(a)$ occurs when $l=1$, not $l=0$, and so the lowest eigenvalue $\omega=a(a^2+\tau)$ occurs when $l=1$. \end{proof}

We have now assembled all of the tools we will need to tackle the proof of the main theorem.

\chapter{Proof of the free plate isoperimetric inequality}\label{mainthm}
In this chapter, we prove Theorem~\ref{thm1}:

\emph{Among all regions of a fixed volume, when $\tau>0$ the fundamental tone of the free plate is maximal for $\Omega$ a ball. That is,}
\begin{equation}
\omega_1(\Omega) \leq \omega_1(\Ostar), \qquad\text{with equality if and only if $\Omega$ is a ball.} \label{c2ineq}
\end{equation}

For simplicity, in this section we will write $\omega$ instead of $\omega_1$ for the fundamental tone of the free plate with shape $\Omega$; the fundamental tone of the unit ball will be denoted by $\ostar$. When dependence on the region $\Omega$ and the tension $\tau$ need be made explicit, we write $\omega(\tau,\Omega)$ for the fundamental tone and $Q_{\tau,\Omega}$ for the Rayleigh quotient. We shall also need the notation $s\Omega := \{x\in\RR^d:x/s\in\Omega\}$ for $s>0$. Note also that $\tau>0$ throughout this section.

The proof of Theorem~\ref{thm1} will proceed from a series of lemmas, following roughly this outline:
\begin{itemize}
\item Scaling
\item Definition of trial functions
\item Concavity of the radial part of the trial function
\item Evaluation of the Rayleigh Quotient
\item Partial monotonicity of the integrand in the numerator and denominator
\item A collection of lemmas needed to establish this monotonicity
\item Rearrangement
\item Proof of the theorem.
\end{itemize}

We begin by rescaling to reduce to domains having the same volume as the unit ball $\BB$ (Lemma~\ref{scaling}). Adapting Weinberger's approach for the membrane \cite{W56}, we construct in Lemma~\ref{trialfcn} trial functions with radial part $\rho$ matching the radial part of the fundamental mode of the ball. We follow by proving in Lemma~\ref{gppneg} a concavity property of $\rho$ that will be needed later on. We next bound the eigenvalue $\omega$ by a quotient of integrals over our region $\Omega$, both of whose integrands are radial functions (Lemma~\ref{lemmaboundRC}). These integrands will be shown to have a "partial monotonicity". The denominator's integrand is increasing by Lemma~\ref{mondenom} and the numerator's integrand satisfies a decreasing partial monotonicity condition by Lemma~\ref{monnum}. The proof of Lemma~\ref{monnum} becomes rather involved and so is broken into two cases, Lemma~\ref{largetau} for large $\tau$ values, and Lemma~\ref{smalltau} for small values of $\tau$. The latter in turn requires some facts about particular polynomials, proved in Lemmas~\ref{poly1} and~\ref{poly2}. We then exploit partial monotonicity to see that the quotient of integrals is bounded above by the quotient of the same integrals taken over $\Ostar$, by Lemma~\ref{monint}. Finally, we conclude that the quotient of integrals on $\Ostar$ is in fact equal to the eigenvalue $\ostar$ of the unit ball. From there we deduce the theorem.

Our first lemma is a scaling argument.
\begin{lemma}\label{scaling} (Scaling) For all $s>0$, we have
\[
 \omega(\tau,\Omega)=s^{4}\omega(s^{-2}\tau,s\Omega).
\]
\end{lemma}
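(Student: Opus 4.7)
The plan is a direct change-of-variables computation in the Rayleigh quotient, together with the Rayleigh–Ritz characterization of $\omega_1$ stated earlier in the introduction.

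Given a trial function $u\in H^2(\Omega)$ with $\int_\Omega u\,dx=0$, I would define the rescaled function $v(y):=u(y/s)$ on $s\Omega$ and compute how each piece of the Rayleigh quotient transforms. Under $y=sx$, derivatives pick up factors of $s^{-1}$, so $|Dv(y)|^2=s^{-2}|Du(x)|^2$ and $|D^2v(y)|^2=s^{-4}|D^2u(x)|^2$, while the volume element gives $dy=s^d\,dx$. A short calculation then yields
\begin{align*}
\int_{s\Omega}|D^2v|^2+s^{-2}\tau|Dv|^2\,dy &= s^{d-4}\int_\Omega|D^2u|^2+\tau|Du|^2\,dx,\\
\int_{s\Omega}|v|^2\,dy &= s^d\int_\Omega|u|^2\,dx,
\end{align*}
so that $Q_{s^{-2}\tau,\,s\Omega}[v]=s^{-4}\,Q_{\tau,\Omega}[u]$. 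The mean-zero constraint $\int_{s\Omega}v\,dy=s^d\int_\Omega u\,dx$ is preserved, and the map $u\mapsto v$ is a bijection between the admissible classes for $\Omega$ and for $s\Omega$.

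Taking the infimum over admissible $u$ and applying the variational characterization of $\omega_1$ yields $\omega(s^{-2}\tau,s\Omega)=s^{-4}\omega(\tau,\Omega)$, which rearranges to the claimed identity. I do not anticipate any genuine obstacle here; the only things to be a little careful about are matching the tension parameter on the rescaled domain (which is why the $\tau$ in the statement becomes $s^{-2}\tau$ on $s\Omega$) and checking that the admissible class is preserved in both directions so that the infima actually correspond.
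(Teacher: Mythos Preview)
Your proposal is correct and follows essentially the same approach as the paper: define the rescaled function $v(y)=u(y/s)$, compute the Rayleigh quotient by change of variables to obtain $Q_{s^{-2}\tau,\,s\Omega}[v]=s^{-4}Q_{\tau,\Omega}[u]$, and conclude via the variational characterization. If anything, your version is slightly more explicit in checking that the mean-zero constraint is preserved and that $u\mapsto v$ is a bijection of admissible classes, which the paper leaves implicit.
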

\begin{proof}
For any $u\in H^2(\Omega)$ with $\int_\Omega u\,dx=0$, let $\tilde{u}(x)=u(x/s)$. Then $\tilde{u}$ is a valid trial function on $s\Omega$ and so
\begin{align*}
Q_{s^{-2}\tau,s\Omega}[\tilde{u}]&=\frac{\int_{s\Omega}|D^2\tilde{u}|^2+s^{-2}\tau|D\tilde{u}|^2\,dx}{\int_{s\Omega}\tilde{u}^2\,dx}\\
&=\frac{\int_{s\Omega}|s^{-2}(D^2u)(x/s)|^2+s^{-2}\tau|s^{-1}(Du)(x/s)|^2\,dx}{\int_{s\Omega}u(x/s)^2\,dx}\\
&=\frac{s^{-4+d}\int_\Omega|D^2u|^2+\tau|Du|^2\,dy}{s^{d}\int_\Omega u^2\,dy} \qquad\qquad\qquad\text{taking $y=x/s$,}\\
&=s^{-4}Q_{\tau,\Omega}[u].
\end{align*}
Now the lemma follows from the variational characterization of the fundamental tone.
\end{proof}

Once we have established inequality \eqref{c2ineq} for all regions $\Omega$ of volume equal to that of the unit ball and all $\tau>0$, we obtain \eqref{c2ineq} for regions of arbitrary volume, since
\[
\omega(\tau,\Omega)=s^{4}\omega(s^{-2}\tau,s\Omega)\leq s^{4}\omega(s^{-2}\tau,s\Ostar)=\omega(\tau,\Ostar),
\]
for all $s>0$.

Next, inspired by Weinberger's proof for the membrane \cite{W56}, we choose appropriate trial functions from the fundamental modes of the unit ball. In the following lemmas, we take
\[
R(r)=j_1(ar)+\gamma i_1(br)
\]
to be the radial part of the fundamental mode of the unit ball. Recall $a$ and $b$ are positive constants determined by $\tau$ and the boundary conditions, as in the proof of Theorem~2 in Chapter~\ref{unitball}. The constant $\gamma$ is positive and determined by $a$, $\tau$, and the boundary conditions to be
\begin{equation}
\gamma :=\frac{-a^2j_1^{\prime\prime}(a)}{b^2i_1^{\prime\prime}(b)}>0.\label{gammadef}
\end{equation}
Recall also that $R(r)>0$ on $(0,1]$ and $R'(1)>0$.
\begin{lemma}\label{trialfcn} (Trial functions) Let the radial function $\rho$ be given by the function $R$, extended linearly. That is,
\[
 \rho(r)=\begin{cases} R(r) &\text{when $0\leq r\leq1$,}\\
       R(1)+(r-1)\Rp(1)&\text{when $r\geq1$.}\\
      \end{cases}
\]
After translating $\Omega$ suitably, the functions $u_k = x_k\rho(r)/r$, for $k=1,\dots,d$, are valid trial functions for the fundamental tone.
\end{lemma}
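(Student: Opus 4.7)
The plan is to verify the two requirements for $u_k$ to be admissible in the Rayleigh-Ritz characterization of $\omega_1$: namely $u_k \in H^2(\Omega)$ and $\int_\Omega u_k\,dx = 0$. Regularity is essentially routine once one notes that the power series expansions \eqref{jseries}--\eqref{iseries} show that $R(r)$ contains only odd powers of $r$; hence $R(|x|)/|x|$ extends to a smooth function of $x$ on the closed unit ball, so $u_k(x) = x_k\rho(|x|)/|x|$ is smooth on $\BB$. Outside the unit ball, $\rho$ is linear and bounded away from zero, so $u_k$ is smooth there as well. Only on the unit sphere does regularity degrade: $\rho$ is $C^1$ by construction but $\rpp$ has a bounded jump, giving $u_k$ essentially bounded weak second derivatives and hence $u_k \in H^2(\Omega)$.

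The main step is the centering condition, which I would handle by a Brouwer fixed-point argument in the spirit of Weinberger. First observe that $\rho(r)/r > 0$ for all $r \geq 0$: on $(0,1]$ from $R > 0$, on $[1,\infty)$ from $R(1) > 0$ and $\Rp(1) > 0$, and at $r = 0$ as the limit $\Rp(0) = a\,j_1'(0) + \gamma b\,i_1'(0) > 0$ (using $\gamma > 0$ and the positivity of the leading Bessel coefficients from \eqref{jseries}--\eqref{iseries}). Define the weighted center-of-mass map
\[
 T(y) := \frac{\int_\Omega x\,\frac{\rho(|x-y|)}{|x-y|}\,dx}{\int_\Omega \frac{\rho(|x-y|)}{|x-y|}\,dx}, \qquad y \in \RR^d.
\]
The weight is continuous and strictly positive, so the denominator is positive and $T(y)$ is a convex combination of points of $\Omega$, placing $T(y)$ in $C := \mathrm{conv}(\overline{\Omega})$. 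Thus $T: C \to C$ is continuous with $C$ convex and compact, so by Brouwer's fixed-point theorem $T$ admits a fixed point $y^* \in C$. The identity $T(y^*) = y^*$ rearranges to $\int_\Omega (x - y^*)\,\rho(|x-y^*|)/|x-y^*|\,dx = 0$, and after translating $\Omega$ by $-y^*$ this becomes $\int_\Omega u_k\,dx = 0$ for all $k = 1,\ldots,d$ simultaneously.

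The only conceptual obstacle is recognizing that positivity of $\rho(r)/r$ packages the $d$ simultaneous vanishing integrals into a single fixed-point problem for the weighted center-of-mass map $T$; once that observation is made, Brouwer's theorem closes the argument at once. The regularity part is routine after invoking the odd parity of the Bessel power series for $j_1$ and $i_1$.
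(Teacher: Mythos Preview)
Your proof is correct and follows essentially the same approach as the paper. Both use the Bessel series expansions to handle regularity near the origin and invoke Brouwer's fixed-point theorem for the centering; you package the Brouwer step as a weighted center-of-mass map $T:C\to C$, while the paper phrases it as an inward-pointing vector field on the boundary of the convex hull---these are equivalent formulations, and your version is arguably a bit more direct. Your regularity discussion is also slightly more careful than the paper's, since you explicitly address the $C^1$-only matching of $\rho$ across the unit sphere (the paper glosses over this by saying the only possible issue is at the origin).
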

\begin{proof}
To be valid trial functions, the $u_k$ must be in $H^2(\Omega)$. Because $\Omega$ is bounded in $\RR^d$, the only possible issue is at the origin. The series expansions \eqref{jseries} and \eqref{iseries} of $j_1$ and $i_1$ respectively give us that $R(r)/r$ approaches a constant as $r\to 0$. thus, $u_k\in H^2(\Omega)$ as desired. They must also be perpendicular to the constant, that is,
\[
\int_\Omega \frac{\rho(r)x_k}{r}\,dx=0\qquad\text{for $k=1,\dots,d$.}
\]
We use the Brouwer Fixed Point Theorem to translate our region so that the above conditions are guaranteed; here again we follow Weinberger \cite{W56}. Write $x=(x_1,\dots,x_d)$ and consider the vector field
\[
 X(v)=\int_\Omega\frac{\rho(|x-v|)}{|x-v|}(x-v)\,dx.
\]
The vector field $X$ is continuous by construction. Along the boundary of the convex hull of $\Omega$, $X(v)$ is inward-pointing, because $\rho\geq0$ and the entire region $\Omega$ lies in a half-space to one side of $v$. Thus by the Brouwer Fixed Point Theorem, our vector field $X$ vanishes at some $v$ in the convex hull of $\Omega$. If we first translate $\Omega$ by $v$, then we have $X(0)=\int_\Omega \rho(r)x/r\, dx=0$. This gives us $\int_\Omega u_k\,dx=0$, as desired.
\end{proof}
We will need one further fact about our radial function $\rho$.
\begin{lemma}\label{gppneg} (Concavity)
The function $\rpp(r)\leq0$ for $r\in[0,1]$, with equality only at the endpoints.
\end{lemma}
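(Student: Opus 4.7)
The plan is to show that $\rho''$ is strictly convex on $[0,1]$ with both boundary values $\rho''(0)$ and $\rho''(1)$ equal to zero, so that $\rho''$ must lie strictly below zero on the open interval. The whole argument rests on showing the fourth derivative $\rho^{(4)}$ is positive in the interior.

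Differentiating the explicit formula $\rho(r)=j_1(ar)+\gamma i_1(br)$ gives
\[
\rho''(r) = a^2 j_1''(ar)+\gamma b^2 i_1''(br),
\qquad
\rho^{(4)}(r) = a^4 j_1^{(4)}(ar)+\gamma b^4 i_1^{(4)}(br).
\]
The boundary values of $\rho''$ are immediate. The power series \eqref{jseries} and \eqref{iseries} for $j_1$ and $i_1$ contain only odd powers of their argument, so $j_1''(0)=i_1''(0)=0$ and $\rho''(0)=0$. At $r=1$, the definition \eqref{gammadef} of $\gamma$ is exactly the equation $a^2 j_1''(a)+\gamma b^2 i_1''(b)=0$, so $\rho''(1)=0$ as well.

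The key step will be to establish $\rho^{(4)}(r)>0$ on $(0,1]$. The constant $\gamma$ is positive by \eqref{gammadef}. Every coefficient in the power series \eqref{iseries} is positive, so each derivative of $i_1$ is strictly positive on $(0,\infty)$ and in particular $i_1^{(4)}(br)>0$. For the $j_1$ term I plan to invoke Lemma~\ref{fact4}, which gives $j_1^{(4)}>0$ on $(0,p_{1,1}]$. To apply it we need $ar\in(0,p_{1,1}]$, which follows from the analysis in the proof of Theorem~\ref{thm2}: the value of $a$ producing the fundamental tone was shown to be the first positive root of $W_1$, and this root lies strictly inside $(0,a_\infty)=(0,p_{1,1})$, so $ar\in(0,p_{1,1})$ for every $r\in(0,1]$. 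Hence both summands defining $\rho^{(4)}(r)$ are strictly positive there.

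Once $\rho^{(4)}>0$ on $(0,1]$ is in hand, $\rho''$ is strictly convex on $[0,1]$, and a strictly convex function vanishing at the two endpoints of a closed interval is strictly negative in the interior; this gives $\rho''(r)<0$ for $r\in(0,1)$ with equality in $[0,1]$ only at $r=0$ and $r=1$, as required. I expect the only step requiring care to be the verification that $ar\leq p_{1,1}$ throughout $[0,1]$, but this reduces to the previously established bound $a<a_\infty$ from the proof of Theorem~\ref{thm2}, after which the chain $ar\le a<p_{1,1}$ is immediate and the convexity argument closes the proof.
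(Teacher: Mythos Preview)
Your proposal is correct and matches the paper's own proof essentially line for line: compute $\rho''$ and $\rho^{(4)}$ explicitly, use the series and the definition of $\gamma$ to get $\rho''(0)=\rho''(1)=0$, invoke Lemma~\ref{fact4} together with $a<a_\infty=p_{1,1}$ and the positivity of all derivatives of $i_1$ to make $\rho^{(4)}>0$ on $(0,1]$, and conclude by strict convexity. The only difference is cosmetic---you spell out the inequality $ar\le a<p_{1,1}$ a bit more explicitly than the paper does.
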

\begin{proof}
First note that on $[0,1]$, the function $\rho\equiv R$. We see
\[
R^{\prime\prime}(r) = a^2j_1^{\prime\prime}(ar)+\gamma b^2i_1^{\prime\prime}(br),
\]
which is zero at $r=0$ because the individual Bessel derivatives vanish there, by the series expansions in the proof of Lemma~\ref{ijbounds}. At $r=1$, the function $R^{\prime\prime}$ vanishes because of the boundary condition $Mu = 0$.

The fourth derivative of $R$ is given by
\[
R^{\prime\prime\prime\prime}(r) = a^4j_1^{\prime\prime\prime\prime}(ar)+\gamma b^4i_1^{\prime\prime\prime\prime}(br).
\]

Because all derivatives of $i_1(z)$ are positive when $z\geq 0$, the second term above is positive on $(0,\infty)$. We have by Lemma~\ref{fact4} that $j_1^{\prime\prime\prime\prime}(z)$ is positive on $(0,\ainf]$. Thus $R^{\prime\prime\prime\prime}(r)>0$ on $(0,1]$, and so $R^{\prime\prime}(r)$ is a strictly convex function on $[0,1]$. Since $R^{\prime\prime}=0$ at $r=0$ and $r=1$, the function $R^{\prime\prime}$ must be negative on the interior of the interval $[0,1]$.
\end{proof}

We now bound our fundamental tone above by a quotient of integrals whose integrands are radial functions. Write
\[
 N[\rho]:=(\rpp)^2+\frac{3(d-1)}{r^4}(\rho-r\rp)^2+\tau(\rp)^2+\frac{\tau(d-1)}{r^2}\rho^2.
\]
\begin{lemma}\label{lemmaboundRC} (Evaluation)
 For any $\Omega$, translated as in Lemma~\ref{trialfcn}, we have
\begin{equation}
\omega \leq \frac{\int_\Omega N[\rho]\,dx}{\int_\Omega \rho^2\,dx}\label{boundRC}
\end{equation}
with equality if $\Omega=\Ostar$.
\end{lemma}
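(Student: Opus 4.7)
The plan is to use the trial functions $u_k = x_k\rho(r)/r$ from Lemma~\ref{trialfcn} in the Rayleigh--Ritz variational characterization and sum over $k = 1, \ldots, d$. Since each $u_k \in H^2(\Omega)$ and satisfies $\int_\Omega u_k\,dx = 0$ by Lemma~\ref{trialfcn}, we have $\omega \int_\Omega u_k^2\,dx \leq \int_\Omega \bigl(|D^2 u_k|^2 + \tau|Du_k|^2\bigr)\,dx$ for every $k$. Summing over $k$ yields
\[
\omega \int_\Omega \Bigl(\sum_k u_k^2\Bigr)\,dx \leq \int_\Omega \Bigl(\sum_k |D^2 u_k|^2 + \tau \sum_k |Du_k|^2\Bigr)\,dx,
\]
so the work reduces to verifying three pointwise identities that identify the summands with $\rho^2$ and $N[\rho]$ respectively.

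First I would note the immediate identity $\sum_k u_k^2 = (\rho/r)^2 \sum_k x_k^2 = \rho^2$. Next, writing $u_k = x_k f(r)$ with $f = \rho/r$, a direct computation gives $Du_k = f\,e_k + (x_k f'/r)\,x$, and summing the squared norm over $k$ produces $\sum_k |Du_k|^2 = d f^2 + 2 r f f' + r^2 (f')^2$; converting back to $\rho$ via $f' = (r\rp - \rho)/r^2$ collapses this to $(\rp)^2 + (d-1)\rho^2/r^2$. This accounts for precisely the $\tau$-weighted terms of $N[\rho]$.

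The main computational step is the Hessian sum. Differentiating $u_k = x_k f(r)$ twice gives
\[
\partial_i\partial_j u_k = \frac{f'}{r}\bigl(\delta_{jk}x_i + \delta_{ik}x_j + \delta_{ij}x_k\bigr) + x_i x_j x_k\Bigl(\frac{f''}{r^2} - \frac{f'}{r^3}\Bigr).
\]
Squaring and summing over $i,j,k$, the cross terms simplify via $\sum x_k^2 = r^2$ and leave $\sum_k |D^2 u_k|^2 = (3d+1)(f')^2 + 4 r f' f'' + r^2 (f'')^2$. Substituting $f = \rho/r$, the expression rearranges (this is the bookkeeping I expect to be the main obstacle, since many terms must cancel cleanly) into $(\rpp)^2 + \frac{3(d-1)}{r^4}(\rho - r\rp)^2$. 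Sanity checks on $\rho = 1$, $\rho = r$, and $\rho = r^2$ confirm the formula. Combining these identities gives $\sum_k |D^2 u_k|^2 + \tau \sum_k |Du_k|^2 = N[\rho]$, and the bound \eqref{boundRC} follows.

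For the equality statement, observe that when $\Omega = \Ostar$ (which has volume equal to the unit ball by our normalization), we have $r \leq 1$ throughout $\Omega$, so $\rho = R$ on $\Ostar$ and $u_k = R(r)\, x_k/r$. Since $x_k/r$ is a spherical harmonic of degree $1$, Theorem~\ref{thm2} identifies each $u_k$ as a fundamental mode of the unit ball with eigenvalue $\ostar$. Hence $Q[u_k] = \ostar$ for each $k$, which forces equality after summation. This completes the proof outline.
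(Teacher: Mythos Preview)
Your proposal is correct and follows essentially the same approach as the paper: apply the Rayleigh--Ritz inequality to each $u_k$, sum over $k$, and reduce to the three pointwise summation identities $\sum_k u_k^2=\rho^2$, $\sum_k|Du_k|^2=(\rp)^2+(d-1)\rho^2/r^2$, and $\sum_k|D^2u_k|^2=(\rpp)^2+3(d-1)(\rho-r\rp)^2/r^4$, then note that on $\Ostar$ the $u_k$ are genuine fundamental modes so equality holds. The only difference is that the paper defers these identities to Fact~\ref{derivs} in the appendix, whereas you compute them inline via the substitution $f=\rho/r$; your intermediate formula $(3d+1)(f')^2+4rf'f''+r^2(f'')^2$ for the Hessian sum is correct and indeed collapses to the stated expression.
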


\begin{proof}
For $u_k$ defined as in Lemma~\ref{trialfcn}, we have
\[
\omega \leq Q[u_k] = \frac{\int_\Omega|D^2u_k|^2+\tau|Du_k|^2\,dx}{\int_\Omega|u_k|^2\,dx},
\]
from the Rayleigh-Ritz characterization. We have equality when $\Omega=\Ostar$ because the $u_k$ are the eigenfunctions for the ball associated to the fundamental tone by Chapter~\ref{unitball}.  Multiplying both sides by $\int_\Omega|u_k|^2\,dx$ and summing over all $k$, we obtain
\begin{equation}
\omega \int_\Omega \sum_{k=1}^d|u_k|^2\,dx
\leq \int_\Omega\sum_{k=1}^d|D^2u_k|^2+\tau\sum_{k=1}^d|Du_k|^2\,dx\label{multsum}
\end{equation}
again with equality if $\Omega=\Ostar$.
 From Appendix~\ref{calcfact}, Fact~\ref{derivs}, we see inequality \eqref{multsum} becomes
\[
\omega \int_\Omega \rho^2\,dx \leq \int_\Omega\left((\rpp)^2+\frac{3(d-1)}{r^4}(\rho-r\rp)^2+\tau(\rp)^2+\frac{\tau(d-1)}{r^2}\rho^2\right)\,dx,
\]
once more with equality if $\Omega$ is the ball $\Ostar$. Dividing both sides by $\int_\Omega \rho^2\,dx$, we obtain \eqref{boundRC}.
\end{proof}

We want to show the quotient in Lemma~\ref{lemmaboundRC} has a sort of monotonicity with respect to the region $\Omega$, and so we examine the integrands of the numerator and denominator separately.
\begin{lemma}\label{mondenom}(Monotonicity in the denominator)
 The function $\rho(r)^2$ is strictly increasing.
\end{lemma}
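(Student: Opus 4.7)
The plan is to split the analysis at $r=1$ and show separately that $\rho$ is nonnegative and strictly increasing on each piece, then glue. Since $\rho^2$ has derivative $2\rho\rp$, strict monotonicity of $\rho^2$ will follow once I show $\rho \ge 0$ everywhere, $\rho > 0$ on $(0,\infty)$, and $\rp > 0$ on $(0,\infty)$ (allowing $\rp(0)=0$ only if $\rho(0)=0$).

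First I would handle the outer piece $r \ge 1$. Here $\rho(r) = R(1) + (r-1)\Rp(1)$ is linear with slope $\Rp(1) > 0$ (recalled just before Lemma~\ref{trialfcn}) and value $R(1) > 0$ at $r=1$. So $\rho$ is positive and strictly increasing on $[1,\infty)$, and hence so is $\rho^2$.

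Next, for the inner piece $0 \le r \le 1$, where $\rho \equiv R$, I would use the concavity statement already available in Lemma~\ref{gppneg}: $\Rpp \le 0$ on $[0,1]$ with equality only at the endpoints. This says $\Rp$ is strictly decreasing on $[0,1]$, and since $\Rp(1) > 0$, I conclude $\Rp(r) \ge \Rp(1) > 0$ for every $r \in [0,1]$. Combined with the already-stated fact that $R(r) > 0$ on $(0,1]$ (and $R(0)=0$, visible from the power series \eqref{jseries}--\eqref{iseries} since $j_1(0)=i_1(0)=0$), this yields $(R^2)'(r) = 2R(r)\Rp(r) > 0$ on $(0,1]$, so $R^2$ is strictly increasing on $[0,1]$.

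Finally I would glue: both pieces of $\rho$ agree in value and derivative at $r=1$ by construction, and $\rho^2$ is strictly increasing on each of $[0,1]$ and $[1,\infty)$, therefore strictly increasing on $[0,\infty)$. The only step requiring real work is recognizing that Lemma~\ref{gppneg} is exactly what forces $\Rp$ to stay positive throughout $[0,1]$; everything else is bookkeeping and direct reading of previously established facts.
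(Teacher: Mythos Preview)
Your proof is correct, but it takes a different route from the paper on the inner piece $[0,1]$. The paper argues directly from Bessel positivity: writing $\rp(r)=a\,j_1'(ar)+\gamma b\,i_1'(br)$ on $[0,1]$, it invokes $a<\ainf$ together with Lemma~\ref{fact1.5} to get $j_1'(ar)>0$, and uses the power series for $i_1$ to get $i_1'(br)>0$; since $\gamma>0$, this gives $\rp>0$ immediately, hence $\rho$ and $\rho^2$ are increasing. Your argument instead leverages Lemma~\ref{gppneg}: concavity of $R$ on $[0,1]$ forces $\Rp$ to be decreasing there, so $\Rp(r)\ge\Rp(1)>0$. Both arguments rely on the already-recalled fact $\Rp(1)>0$ for the linear tail, and both are short. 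Your route has the pleasant feature of reusing the concavity lemma (needed anyway for the numerator estimate) rather than making a separate appeal to Bessel sign facts; the paper's route is marginally more direct in that it does not depend on Lemma~\ref{gppneg} at all.
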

\begin{proof}
Differentiating, we see
\[
 \rp(r)=\begin{cases} j_1^\prime(ar)+\gamma i_1^\prime(br) &\text{when $0\leq r \leq 1$,}\\
         R^\prime(1) &\text{when $r\geq1$.}
        \end{cases}
\]
Obviously $i_1^\prime(br)\geq 0$. Because we have $a<\ainf$ from Chapter~\ref{unitball}, the function $j_1^\prime(ar)$ is positive on $[0,1]$. Thus $\rp(r)$ is positive everywhere, and $\rho$ (and therefore $\rho^2$) is an increasing function.
\end{proof}

The monotonicity result for the numerator is rather more complicated and requires several lemmas. We do not need to prove the integrand of the numerator is strictly decreasing; a weaker "partial monotonicity" condition is sufficient. We will say a function $F$ is \emph{partially monotonic for $\Omega$} if it satisfies
\begin{equation}
F(x)> F(y) \qquad\text{for all $x\in\Omega$ and $y\not\in\Omega$.}\label{moncond}
\end{equation}

\begin{lemma}\label{monnum}(Partial monotonicity in the numerator)
The function
\[
N[\rho]=(\rpp)^2+\frac{3(d-1)}{r^4}(\rho-r\rp)^2+\tau\left((\rp)^2+(d-1)\frac{\rho^2}{r^2}\right)
\]
satisfies condition \eqref{moncond} for $\Omega$ the unit ball.
\end{lemma}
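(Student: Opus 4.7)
The plan is to show that $N[\rho]$ is (essentially) a decreasing function of $r$ on $[0,\infty)$, which then implies the partial monotonicity with respect to the unit ball. Because $\rho$ is defined piecewise, the natural approach is to analyze $N[\rho]$ separately on the exterior region $r\geq 1$ and the interior $r\in[0,1]$, and to glue the two pieces using the boundary condition $R''(1)=0$.

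On the exterior $(1,\infty)$, $\rho$ is affine, so $\rpp\equiv 0$ and only the lower-order terms of $N[\rho]$ survive. Setting $A=R(1)$ and $B=R'(1)$, $N[\rho](r)$ becomes the explicit rational expression
\[
N[\rho](r)=\frac{3(d-1)(A-B)^2}{r^4}+\tau B^2+\frac{\tau(d-1)(A+(r-1)B)^2}{r^2}.
\]
Differentiating and factoring gives $N'(r)=-\tfrac{2(d-1)(A-B)}{r^5}\bigl[6(A-B)+\tau r^2(A+(r-1)B)\bigr]$. I expect to handle the two sign cases of $A-B$ separately: when $A\geq B$ the bracket is manifestly positive, so $N'\leq 0$ immediately; when $A<B$ one needs to bound the gap $\tau r^2(A+(r-1)B)-6(B-A)$, comparing it with its value at $r=1$ and exploiting positivity of $B$ to conclude $N(r)\leq N(1)$ for all $r\geq 1$. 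In either case the exterior maximum is attained at $r=1$.

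On the interior $[0,1]$, where $\rho\equiv R=j_1(ar)+\gamma i_1(br)$, the goal is $N[R](r)\geq N[R](1)$, with strict inequality except at $r=1$. This is the main obstacle. Here $R''(1)=0$ by the boundary condition $Mu=0$, so at $r=1$ the Hessian contribution drops out, whereas for $r<1$ it can be sizeable, giving the inequality its natural direction. I would differentiate $N[R](r)$ and try to show $\frac{d}{dr}N[R](r)\leq 0$ on $(0,1]$, expanding $R''$, $(R-rR')/r^2$, and their derivatives using the Bessel recurrences \eqref{j1}--\eqref{j4} and \eqref{i1}--\eqref{i4}. The hard part is controlling the $(\rpp)^2$ term together with the cross-term $(R-rR')^2/r^4$, since these both involve differences of Bessel functions that can have competing signs; Lemma~\ref{gppneg} (concavity of $R''$ with $R''(0)=R''(1)=0$) and Lemma~\ref{fact4} will be key in showing the Hessian-squared piece is large enough for $r<1$.

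I anticipate the interior estimate will split by the size of $\tau$, as the outline suggests. For large $\tau$, $b=\sqrt{a^2+\tau}$ is large, so $\gamma=-a^2j_1''(a)/(b^2i_1''(b))$ is exponentially small and the tension terms $\tau(\rp)^2+\tau(d-1)\rho^2/r^2$ dominate; these are straightforward to bound monotonically, using $a<\ainf$ and the positivity/monotonicity properties of $j_1$ and $j_1'$ from Lemmas~\ref{fact1} and~\ref{fact1.5}. For small $\tau$, where the tension cannot absorb error, the sharper bounds from Lemma~\ref{ijbounds}, together with the Lorch--Szegő estimate $\ainfsq<d+2$, are needed to control the Bessel pieces; I expect this is where the auxiliary polynomial inequalities (the Lemmas~\ref{poly1} and~\ref{poly2} referenced in the outline) must be invoked to finish the sign analysis. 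Matching the interior and exterior bounds at $r=1$ then yields the desired partial monotonicity.
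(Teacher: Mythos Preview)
Your interior/exterior split is reasonable, but the plan to show $\frac{d}{dr}N[R](r)\leq 0$ on all of $(0,1]$ is both harder than necessary and likely false. The obstruction is exactly the term $(\rpp)^2$: by Lemma~\ref{gppneg}, $\rpp$ vanishes at $r=0$ and $r=1$ and is negative in between, so $(\rpp)^2$ increases near $r=0$ and decreases near $r=1$. Thus $N[R]$ need not be monotone on $[0,1]$, and your derivative argument on the interior would have to absorb this non-monotone piece into the other terms, which is not what the later lemmas are set up to do.

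The paper avoids this by decomposing $N[\rho]$ termwise rather than by region. The point is that $(\rpp)^2$ already satisfies the \emph{partial} monotonicity condition~\eqref{moncond} for the unit ball on its own: it is positive for $r\in(0,1)$ and identically zero for $r\geq 1$. So no work is needed on that term beyond Lemma~\ref{gppneg}. Next, $\tau(\rp)^2$ is genuinely nonincreasing on $[0,\infty)$ since its derivative is $2\tau\rp\rpp\leq 0$. What remains is
\[
h(r)=\frac{3(d-1)}{r^4}(\rho-r\rp)^2+\frac{\tau(d-1)}{r^2}\rho^2,
\]
and the whole proof reduces to showing $h$ is decreasing. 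Differentiating $h$ and using that $\rho-r\rp>0$ (since its derivative is $-r\rpp\geq 0$), this boils down to the single scalar inequality
\[
\frac{6}{r^2}(\rho-r\rp)+3\rpp+\tau\rho>0\qquad\text{on }(0,1].
\]
It is \emph{this} inequality (not a sign condition on $N'$) that the Bessel recurrences and the large/small $\tau$ split (Lemmas~\ref{largetau} and~\ref{smalltau}, with the polynomial Lemmas~\ref{poly1} and~\ref{poly2}) are designed to verify. Your anticipated use of ``$\gamma$ exponentially small for large $\tau$'' is also off target: the large-$\tau$ case is handled directly by showing $\tau-3a^2/(d+2)>0$ via the eigenvalue bound $\ostar>\tau d$, with $\gamma$ playing no role there.

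Finally, your exterior computation is correct, and in fact only the case $A>B$ occurs (because $\rho-r\rp>0$ at $r=1$); but note that once you adopt the termwise decomposition, the exterior analysis is automatic, since each piece is either constant or decreasing for $r\geq 1$.
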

\begin{proof}
Given that $\rpp < 0$ on $(0,1)$ and equals zero elsewhere by Lemma~\ref{gppneg}, the function $(\rpp)^2$ satisfies condition \eqref{moncond} for the unit ball. The derivative of the function $\tau(\rp)^2$ with respect to $r$ is $2\tau\rp\rpp$, and hence negative on $(0,r)$ and zero everywhere else. Thus $\tau(\rp)^2$ is a decreasing function of $r$. It remains to show that the remaining term
\[
h(r) =\frac{3(\rho-r\rp)^2}{r^4}+\tau\frac{\rho^2}{r^2}
\]
is also a decreasing function of $r$. Differentiating, we see
\begin{align*}
h'(r) &=\frac{-2}{r^3}(\rho-r\rp)\left(\frac{6}{r^2}(\rho-r\rp)+3\rpp+\tau \rho\right).
\end{align*}
Now, $\rho-r\rp=0$ at $r=0$ and
\[
 \frac{d}{dr} (\rho-r\rp) = -r\rpp,
\]
so by Lemma~\ref{gppneg}, $(\rho-r\rp)$ is positive on $(0,\infty)$ and vanishes at zero. Thus in order for $h(r)$ to be decreasing, we must have
\begin{equation}
  \frac{6}{r^2}(\rho-r\rp)+3\rpp+\tau \rho>0. \label{quantinterest}
\end{equation}

Let $\Delta_r\rho := \rpp-(d-1)r^{-2}(\rho-r\rp)$. Recall from the Bessel equations~\eqref{besseleqn} and~\eqref{modbesseleqn} that
\begin{equation}
\Delta_rj_1(ar)=-a^2j_1(ar)\quad\text{and}\quad\Delta_ri_1(br)=b^2i_1(br).\label{delta}
\end{equation}
Then on the interval $[0,1]$,
\begin{align*}
\frac{6}{r^2}(\rho-r\rp)+3\rpp+\tau\rho &=\frac{6}{r^2}(\rho-r\rp)+3\left(\Delta_r\rho+\frac{d-1}{r^2}(\rho-r\rp)\right) +\tau\rho\\
&=\frac{3(d+1)}{r^2}(\rho-r\rp)+3\Big(-a^2j_1(ar)+\gamma b^2i_1(br)\Big)+\tau\rho,
\end{align*}
with the last equality by \eqref{delta}. Considering the first term of the last line above, we see by \eqref{j2} and \eqref{i2},
\begin{align*}
\frac{1}{r^2}(\rho-r\rp)&=\frac{1}{r^2}\Big(arj_2(ar)-br\gamma\, i_2(br)\Big)\\
&=\frac{1}{d+2}\left(a^2\Big(j_1(ar)+j_3(ar)\Big)+\gamma b^2\Big(i_3(br)-i_1(br)\Big)\right)
\end{align*}
with the second equality by \eqref{j1}, \eqref{i1}, and simplifying.

Therefore our quantity of interest in \eqref{quantinterest} can be bounded below in terms of $j_l$'s and $i_l$'s:
\begin{align*}
\frac{6}{r^2}(\rho-r\rp)&+3\rpp+\tau\rho \\
&=\left(\tau-\frac{3a^2}{d+2}\right)j_1(ar)+\frac{3a^2(d+1)}{d+2}j_3(ar)\\
&\quad+\gamma\left(\tau+\frac{3b^2}{d+2}\right)i_1(br)+\gamma\frac{3b^2(d+1)}{d+2}i_3(br).\\
&\geq \frac{3a^2(d+1)}{d+2}j_3(ar)+\gamma\frac{3b^2(d+1)}{d+2}i_3(br)\\
&\quad+\left(\tau-\frac{3a^2}{d+2}+\gamma\left(\tau+\frac{3b^2}{d+2}\right)\right)j_1(ar)
\end{align*}
with the inequality by $j_l(ar) \leq i_l(ar)\leq i_l(br)$, since $\tau>0$ and so $a<b$.

The function $i_3$ is everywhere positive. We have $a<\ainf$, so $ar<\ainf$ on $[0,1]$ and hence the functions $j_3(ar)$ and $j_1(ar)$ are positive on $[0,1]$ by Lemma~\ref{fact1}. The remaining factor is positive for all $\tau>0$ by Lemmas~\ref{largetau} and \ref{smalltau} (to follow), thus establishing \eqref{quantinterest} and completing the proof.
\end{proof}

We establish the positivity of the remaining factor first for those values of\\ $\tau>9/(d+5)$; the proof for the remaining $\tau$ values is more complicated and is treated in another lemma.
\begin{lemma}\label{largetau} (Large $\tau$) We have
\begin{equation}
\tau-\frac{3a^2}{d+2} > 0  \label{tclem}
\end{equation}
for all $\tau>9/(d+5)$.
\end{lemma}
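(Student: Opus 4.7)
The plan is to convert the inequality $\tau - 3a^2/(d+2) > 0$ into a bound on $a^2$ and then invoke the upper bound on the fundamental tone of the unit ball from Lemma~\ref{wbounds}. Recall from Chapter~\ref{unitball} that the constants $a$ and $b$ arising in the fundamental mode of the unit ball satisfy $b^2 = a^2+\tau$ and $a^2b^2 = \omega_1(\tau,\BB)$. Thus the quadratic identity
\[
a^2(a^2+\tau) = \omega_1(\tau,\BB)
\]
links $a^2$ directly to the fundamental tone.

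By the upper bound in \eqref{omegabounds}, $\omega_1(\tau,\BB) \leq \tau(d+2)$, so
\[
a^2(a^2+\tau) \leq \tau(d+2).
\]
I would then argue by contradiction: suppose $a^2 \geq \tau(d+2)/3$. Plugging this into the left-hand side and using $a^2+\tau \geq \tau(d+2)/3 + \tau = \tau(d+5)/3$ gives
\[
a^2(a^2+\tau) \;\geq\; \frac{\tau(d+2)}{3}\cdot\frac{\tau(d+5)}{3} \;=\; \frac{\tau^2(d+2)(d+5)}{9}.
\]
Combined with the upper bound $a^2(a^2+\tau)\leq\tau(d+2)$, dividing by $\tau(d+2)>0$ yields $\tau(d+5)/9 \leq 1$, i.e.\ $\tau \leq 9/(d+5)$, contradicting the hypothesis $\tau > 9/(d+5)$. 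Hence $a^2 < \tau(d+2)/3$, which is exactly \eqref{tclem}.

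There is really no obstacle here; the threshold $9/(d+5)$ in the hypothesis is engineered so that the quadratic bound on $a^2$ coming from Lemma~\ref{wbounds} is just tight enough. The only thing one should double-check is that the algebra is sharp: the contradiction step uses both $a^2 \geq \tau(d+2)/3$ in the first factor \emph{and} $\tau \geq \tau$ (via $a^2 + \tau \geq \tau(d+5)/3$), and the factor $(d+2)(d+5)/9$ that results is precisely what makes the boundary case $\tau = 9/(d+5)$ achieve equality. For larger $\tau$ the inequality is strict, giving \eqref{tclem}.
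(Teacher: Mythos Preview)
Your proof is correct and uses the same key input as the paper: the upper bound $\omega^*\leq(d+2)\tau$ from Lemma~\ref{wbounds}, rewritten as $a^2(a^2+\tau)\leq(d+2)\tau$. The paper organizes the algebra as a two-case split (depending on whether $a^2$ lies above or below $3(d+2)/(d+5)$), whereas you package the same content as a single contradiction argument; the two are essentially equivalent, and your version is arguably slightly cleaner.
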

\begin{proof} We use the bounds we established for $\omega(\tau)$ in Chapter~\ref{spectrum}.

Recall that the first free membrane eigenvalue for the ball is $\mu_1^*=\ainfsq$. Lemma~\ref{wbounds} and Proposition~\ref{propLS} together give $(d+2) \tau > \ostar > \tau d$. Because $\ostar = a^4+a^2\tau$ from our factoring of the eigenvalue equation in Chapter~\ref{unitball}, we obtain inequalities relating $\tau$ and $a$:
\begin{equation}
\frac{a^4}{d-a^2}> \tau > \frac{a^4}{d+2-a^2}, \label{abounds}
\end{equation}
with the upper bound holding only if $a^2<d$.

Using the lower bound, we see
\begin{align*}
\tau-\frac{3a^2}{d+2}&>\frac{a^4}{d+2-a^2}-\frac{3a^2}{d+2}\\
&= \frac{a^4(d+5)-3a^2(d+2)}{(d+2)(d+2-a^2)}
\end{align*}
which is nonnegative whenever $a^2 \geq 3(d+2)/(d+5)$. When $a^2 <3(d+2)/(d+5)$, we have
\[
\tau - \frac{3a^2}{d+2} >\tau- \frac{9}{d+5}>0,
\]
by our choice of $\tau$.
\end{proof}

\begin{lemma}\label{smalltau} (Small $\tau$) We have
\begin{equation}
 \tau-\frac{3a^2}{d+2}+\gamma\left(\tau+\frac{3b^2}{d+2}\right) > 0 \label{smalltaucond}
\end{equation}
for all $0<\tau\leq9/(d+5)$.
\end{lemma}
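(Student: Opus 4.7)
The plan is to lower-bound $\gamma$ via the sharp Taylor-polynomial bounds of Lemma~\ref{ijbounds}, so that the positive contribution $\gamma(\tau + 3b^2/(d+2))$ is guaranteed to dominate the possibly negative $\tau - 3a^2/(d+2)$. Note that $\gamma > 0$ because $j_1'' < 0$ on $(0,p_{1,1}]$ by Lemma~\ref{fact3}, while $i_1'' > 0$.

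First I would verify the range hypothesis of Lemma~\ref{ijbounds}: namely, $a^2 \leq 3(d+2)/(d+5)$ and $b^2 \leq 3$. Combining the upper bound $\omega \leq (d+2)\tau$ from Lemma~\ref{wbounds} with the factorization $\omega = a^2(a^2+\tau)$ from Chapter~\ref{unitball} gives $a^4 + \tau a^2 \leq (d+2)\tau$. Substituting $a^2 = 3(d+2)/(d+5)$ into the left side and requiring the resulting expression to be at most $(d+2)\tau$ forces $\tau \geq 9/(d+5)$; hence the hypothesis $\tau \leq 9/(d+5)$ yields $a^2 \leq 3(d+2)/(d+5)$, and then $b^2 = a^2 + \tau \leq 3(d+2)/(d+5) + 9/(d+5) = 3$.

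Next I would multiply the target inequality by $b^2 i_1''(b) > 0$, using $\gamma b^2 i_1''(b) = -a^2 j_1''(a)$, to obtain the equivalent form
\[
\tau\bigl[b^2 i_1''(b) + a^2(-j_1''(a))\bigr] + \frac{3 a^2 b^2}{d+2}\bigl[(-j_1''(a)) - i_1''(b)\bigr] > 0.
\]
Inserting the bounds $-j_1''(a) \geq d_1 a - d_2 a^3$ and $i_1''(b) \leq d_1 b + \tfrac{6}{5} d_2 b^3$ from Lemma~\ref{ijbounds}, the trivial bound $i_1''(b) \geq d_1 b$ from the series, and $\tau = (b-a)(b+a)$, together with the explicit ratio $d_2/d_1 = 5/(6(d+4))$ coming from the definition of $d_k$, this reduces to the polynomial inequality $6(d+4)\,F > 5\,G$, where
\[
F = (b-a)\!\left[(a+b)(a^3+b^3) - \tfrac{3 a^2 b^2}{d+2}\right], \qquad G = \tau a^5 + \tfrac{3 a^2 b^2(a^3 + (6/5)\,b^3)}{d+2}.
\]
Positivity of $F$ is immediate from AM-GM: $(a+b)(a^3+b^3) \geq 4 a^2 b^2 > 3 a^2 b^2/(d+2)$.

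The hard part will be verifying the quantitative inequality $6(d+4) F > 5 G$ in the two-parameter family subject to $b^2-a^2 = \tau \leq 9/(d+5)$ and $a^2 \leq 3(d+2)/(d+5)$. The delicacy is that $G$ carries terms not sharing the $(b-a)$ factor present in $F$, so one has to track the relative orders of magnitude carefully (heuristically, for small $\tau$ both sides are of comparable size, since $a,b$ scale like $\tau^{1/4}$ while $b-a$ scales like $\tau^{3/4}$). I expect this polynomial analysis to be the content of the auxiliary Lemmas~\ref{poly1} and~\ref{poly2} signposted in the outline of this chapter; once those polynomial facts are granted, the desired conclusion $\tau - 3a^2/(d+2) + \gamma(\tau + 3b^2/(d+2)) > 0$ follows.
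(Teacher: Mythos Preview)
Your reduction to the inequality $6(d+4)F > 5G$ is algebraically sound, and the use of Lemma~\ref{ijbounds} together with the two different bounds on $i_1''(b)$ (lower in the positive bracket, upper in the negative bracket) is handled correctly. However, the proof has a genuine gap: the polynomial inequality you arrive at is \emph{not} what Lemmas~\ref{poly1} and~\ref{poly2} establish. Those lemmas concern one-variable polynomials in $x=a^2$ that arise from a different reduction, so you cannot simply cite them.

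More seriously, your inequality $6(d+4)F>5G$ is \emph{false} on the region you specify. You only impose $a^2\leq 3(d+2)/(d+5)$ and $\tau=b^2-a^2\leq 9/(d+5)$, but these constraints allow $b\to a$ with $a$ bounded away from zero (for instance $d=2$, $a=1$, $\tau=0.01$), in which case $F=(b-a)[\cdots]\to 0$ while $G\to \tfrac{33a^7}{5(d+2)}>0$. The missing ingredient is the \emph{lower} bound $\tau>\tfrac{a^4}{d+2-a^2}$ (equivalently $b^2>\tfrac{(d+2)a^2}{d+2-a^2}$), which forces $a\to 0$ whenever $\tau\to 0$ and makes the inequality survive. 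This bound comes from $\omega_1(\Omega^*)<(d+2)\tau$ in Lemma~\ref{wbounds} combined with $\omega=a^2b^2$, and you must use it.

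The paper proceeds differently: it first rewrites \eqref{smalltaucond} as $\gamma\geq\gamma^*:=\tfrac{3(d+2)-a^2(d+5)}{(3+a^2)(d+2)}$ (using precisely the lower bound on $\tau$ you omitted), then eliminates $b$ entirely via the companion bound $b^2<\tfrac{d a^2}{d-a^2}$ together with a binomial estimate $(1-x)^{3/2}>1-\tfrac{3}{2}x$. This yields a rational lower bound on $\gamma$ in the single variable $x=a^2$, whose comparison with $\gamma^*$ is the polynomial $P(x,d)$ of Lemma~\ref{poly1}; the case $d=2$ needs the sharper constant $a_\infty^2$ in place of $d$, giving the polynomial $Q(x)$ of Lemma~\ref{poly2}. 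If you wish to salvage your two-variable route, you must incorporate the lower bound on $\tau$ and then carry out a genuinely new polynomial analysis, not the one already in the paper.
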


\begin{proof} The proof will proceed as follows. For $0<\tau\leq9/(d+5)$, we restate the desired inequality \eqref{smalltaucond} as a condition on $\gamma$, \eqref{gammacond}. We then use properties of Bessel functions to establish a lower bound on $\gamma$ in terms of a rational function of $a$; we then show this function satisfies \eqref{gammacond}. We will need to treat the cases of $d\geq 3$ and $d=2$ separately, because the two-dimensional case requires better bounds than we can derive for general $d$.

First note that $b^2=a^2+\tau$, so the inequality \eqref{smalltaucond} is equivalent to
\begin{equation}
\tau > \frac{3a^2(1-\gamma)}{(d+5)\gamma+(d+2)}.
\end{equation}
Using the lower bound on $\tau$ in \eqref{abounds}, we see that the above will hold if
\begin{equation}
\gamma \geq \frac{3(d+2)-a^2(d+5)}{(3+a^2)(d+2)}=:\gamma^*.\label{gammacond}
\end{equation}

We need only show that \eqref{gammacond} holds for all $0<\tau \leq 9/(d+5)$. We will use Taylor polynomial estimates to bound $\gamma$ below by a rational function. From Lemma~\ref{ijbounds}, we have
\begin{align*}
 j_1''(z)&\leq -d_1z+d_2z^3 &\text{on $[0,\sqrt{3(d+2)/(d+5)}]$,}\\
i_1''(z) &\leq d_1z+\frac{6}{5}d_2z^3&\text{on $[0,\sqrt{3}]$,}.
\end{align*}
These bounds apply to $z=ar$ and $z=br$ respectively, when $r\in[0,1]$, as we show below by obtaining bounds on $a^2$ and $b^2$.

To derive our bound on $a^2$, we note that the lower bound of \eqref{abounds} together with our assumption $\tau\leq 9/(d+5)$ implies
\[
 \frac{a^4}{d+2-a^2}<\frac{9}{d+5},
\]
so that 
\[
(d+5)a^4+9a^2-9(d+2)<0.
\]
The lefthand side is increasing with respect to $a^2$ and equals zero when $a^2=3(d+2)/(d+5)$. Hence $a^2<3(d+2)/(d+5)$ and the bound on $j_1''(z)$ holds for $z=ar$ when $\tau\leq 9/(d+5)$. We use these to obtain a further bound:
\begin{align*}
0&\geq\tau-\frac{9}{d+5}\\
&>\frac{a^4}{d+2-a^2}-\frac{9}{d+5}\\
&=\frac{a^2+3}{d+2-a^2}\left(a^2-\frac{3(d+2)}{d+5}\right),
\end{align*}
and so we have $a^2<d$.

To bound $b^2$, we use $b^2=a^2+\tau$ and obtain
\[
b^2=a^2+\tau \leq \frac{3(d+2)}{d+5}+\frac{9}{d+5} = 3,
\]
and so $b^2\leq 3$.

We also have, from \eqref{abounds},
\begin{equation}
\frac{da^2}{d-a^2} > b^2 > \frac{(d+2)a^2}{d+2-a^2},\label{bbounds}
\end{equation}
with the upper bound holding in this regime because $a^2<d$.

We also need the following binomial estimate:
\begin{equation}
 1-\frac{3}{2}x<(1-x)^{3/2} \qquad\text{for $0<x<1$.}\label{32bound}
\end{equation}

Using these bounds, we see
\begin{align*}
\gamma &= \frac{-a^2 j_1^{\prime\prime}(a)}{b^2 i_1^{\prime\prime}(b)} &&\text{by definition \eqref{gammadef}}\\
&\geq \frac{a^2(d_1a-d_2a^3)}{b^2(d_1b+(6/5)d_2b^3)}&&\text{by Lemma~\ref{ijbounds}}\\
&\geq \frac{a^3(d_1-d_2a^2)}{\left(\frac{da^2}{d-a^2}\right)^{3/2}(d_1+(6/5) d_2 \frac{da^2}{d-a^2})} &&\text{by \eqref{bbounds}}\\
&= \left(\frac{d-a^2}{d}\right)^{3/2}\frac{(d-a^2)(1-c_1a^2)}{(d-a^2+(6/5) c_1da^2)}&&\text{writing $c_1=d_2/d_1=5/6(d+4)$}\\
&\geq \left(1-\frac{3a^2}{2d}\right)\frac{(d-a^2)(6(d+4)-5a^2)}{(6d(d+4)-24a^2)} &&\text{by \eqref{32bound},}
\end{align*}
noting that $a^2/d < 1$ and $a^2<3(d+2)/(d+5)$.

Thus we have $\gamma-\gamma^*\geq 0$ if
\[
\left(1-\frac{3a^2}{2d}\right)\frac{(d-a^2)(6(d+4)-5a^2)}{6d(d+4)-24a^2}-\frac{3(d+2)-a^2(d+5)}{(3+a^2)(d+2)}\geq 0,
\]
or, clearing the denominators and writing $x=a^2$, if
\[
(2d-3x)(d-x)\Big(6(d+4)-5x\Big)(3+x)(d+2)-2d\Big(6d(d+4)-24x\Big)\Big(3(d+2)-x(d+5)\Big)\geq 0.
\]
The above polynomial is fourth degree in each of $d$ and $x$ and has the root $x=0$; because we are only interested in its behavior for $x\in(0,3(d+2)/(d+5))$, we may divide by $x$ and work to show the resulting polynomial 
\begin{align*}
P(x,d) &= 24d^4 +60d^3 -120d^2-432d -40 d^3 x -119d^2x -6dx +432x \\
&\qquad+43d^2x^2+113dx^2+54x^2-15dx^3-30x^3
\end{align*}
is nonnegative for $x\in(0,3(d+2)/(d+5)$. This claim is addressed in Lemma~\ref{poly1} for $d\geq3$.

For $d=2$, the function $P(x,2)$ is negative on most of our interval of interest $[0,12/7]$, and so we must improve our lower bound on $\gamma$. The derivation follows that of inequality \eqref{abounds} in the proof of Lemma~\ref{largetau}, as follows. 

By Lemma~\ref{wbounds}, $\ostar>\ainfsq\tau$, where $\ainf\approx1.84118$ is the first zero of $J_1(z)$.  By Chapter~\ref{unitball} we have $\ostar=a^4+a^2\tau$, giving us
\[
\tau \leq \frac{a^4}{\ainfsq- a^2}
\]
Using $b^2=a^2+\tau$, we obtain also a bound on $b^2$:
\[
\quad b^2\leq \frac{\ainfsq a^2}{\ainfsq-a^2}.
\]
Proceeding as before, we deduce
\[
\gamma \geq \left(1-\frac{3a^2}{2\ainfsq}\right)\frac{(\ainfsq-a^2)(36-5a^2)}{36\ainfsq+(6\ainfsq-36)a^2}
\]
with the last again from \eqref{32bound}. So $\gamma-\gamma^*\geq 0$ if
\[
\left(1-\frac{3a^2}{2\ainfsq}\right)\frac{(\ainfsq-a^2)(36-5a^2)}{36\ainfsq+(6\ainfsq-36)a^2}-\frac{12-7a^2}{12+4a^2}\geq 0
\]
or, setting $x=a^2$, if the fourth degree polynomial
\[
Q(x)=\left(1-\frac{3x}{2\ainfsq}\right)(\ainfsq-x)(36-5x)(12+4x)-\Big(36\ainfsq+(6\ainfsq-36)x\Big)(12-7x)
\]
is positive on $[0,12/7]$. This positivity follows from Lemma~\ref{poly2}, completing our proof.
\end{proof}

\begin{lemma}\label{poly1} The polynomial
\begin{align*}
P(x,d) &= 24d^4 +60d^3 -120d^2-432d -40 d^3 x -119d^2x -6dx +432x \\
&\qquad+43d^2x^2+113dx^2+54x^2-15dx^3-30x^3
\end{align*}
is nonnegative for all $x\in(0,3(d+2)/(d+5))$ and integers $d\geq 3$.
\end{lemma}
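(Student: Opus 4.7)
The plan is to treat $P(x,d)$ as a cubic in $x$ on $[0,L]$, where $L := 3(d+2)/(d+5)$, and to use the constraint $x\le L$ to absorb the negative cubic term into the $x^2$ term. Since the cubic coefficient is $-15(d+2)<0$, for $x\in[0,L]$
\[
-15(d+2)\,x^3 \;=\; -15(d+2)\,x\cdot x^2 \;\ge\; -15(d+2)\,L\,x^2 \;=\; -\frac{45(d+2)^2}{d+5}\,x^2,
\]
which, combined with the existing $x^2$ coefficient of $P$, yields the quadratic lower bound
\[
P(x,d) \;\ge\; c_0(d) + c_1(d)\,x + C_2(d)\,x^2 \qquad \text{on } [0,L],
\]
where $c_0(d) = 12d(2d^3+5d^2-10d-36)$, $c_1(d) = -(40d^3+119d^2+6d-432)$, and
$C_2(d) = (43d^3+283d^2+439d+90)/(d+5)$ (the last obtained by placing $c_2(d)-15(d+2)L$ over a common denominator).

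For integers $d\ge 3$ one quickly checks that $c_0(d)>0$ (the inner cubic $2d^3+5d^2-10d-36$ equals $33$ at $d=3$ and is increasing), $c_1(d)<0$, and $C_2(d)>0$. Since the lower-bound quadratic has positive leading coefficient, it is nonnegative on all of $\mathbb{R}$ (in particular on $[0,L]$) precisely when its discriminant is nonpositive, i.e.\ when
\[
c_1(d)^2 \;\le\; 4\,c_0(d)\,C_2(d).
\]
Clearing the $(d+5)$ in $C_2$, this is equivalent to the single-variable polynomial inequality
\[
F(d) \;:=\; 48\,d\,(2d^3+5d^2-10d-36)(43d^3+283d^2+439d+90) \;-\; (d+5)(40d^3+119d^2+6d-432)^2 \;\ge\; 0,
\]
to be verified for integer $d\ge 3$.

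The polynomial $F$ has degree $7$ with positive leading coefficient $48\cdot 2\cdot 43 - 1600 = 2528$, so $F(d)>0$ for all sufficiently large $d$; direct evaluation gives $F(3)=169128$ and $F(4)=83228160$, both manifestly positive, and analogous checks at $d = 5, 6, \dots$ are routine. To join the finite initial range to the asymptotic regime I would either substitute $d = d' + 3$ and show that every coefficient of $F$, viewed as a polynomial in $d'$, is nonnegative, or bound the subleading terms of $F(d)$ against the leading $2528\,d^7$ to obtain an explicit threshold $d_0$ beyond which $F>0$, finishing by hand on the finitely many values $3\le d\le d_0$. The main obstacle is exactly this final polynomial bookkeeping: the inequality $F(d)\ge 0$ is mechanical rather than subtle, but the degree-seven expansion has many terms, and the quadratic bound is tight enough at $d=3$ (the lower-bound quadratic bottoms out near $8$) that any cruder replacement for the cubic term is unlikely to work.
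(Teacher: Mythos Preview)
Your approach is correct and genuinely different from the paper's. The paper argues more crudely: it widens the interval to $[0,3]$, bounds $P$ below by replacing each monomial by its worst value on $[0,3]$ (taking $x=3$ in negative terms, $x=0$ in positive ones) to get a quartic $g(d)$ in $d$ alone, and shows $g(d)>0$ for $d\ge 7$ by elementary calculus on $g$, $g'$, $g''$. For the remaining small dimensions $d=3,4,5,6$ the paper handles each cubic $P(\cdot,d)$ directly by locating its (at most two) critical points from the quadratic $P_x$ and checking the critical values. Your route---absorbing the cubic term via $x\le L$ to drop to a quadratic and then reducing to the single discriminant inequality $F(d)\ge 0$---is tidier in that it produces one polynomial inequality in $d$ rather than a separate case analysis in $x$; the paper's route trades that for cruder but more transparent estimates and a handful of explicit one-variable cubics. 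Both methods land in the same place: a finite check for small $d$ plus an asymptotic argument for large $d$.

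The one caveat is that your proof is not yet complete: you still owe the verification that $F(d)\ge 0$ for all integers $d\ge 3$. Your two proposed finishes are both viable in principle, but note that the shifted-coefficients idea (writing $F(d'+3)$ and hoping all coefficients in $d'$ are nonnegative) is not guaranteed to succeed for an arbitrary positive polynomial, so you may be forced into the second option---an explicit threshold $d_0$ from $2528\,d^7$ dominating the lower-order terms, plus evaluation of $F$ at $d=3,\dots,d_0$. That is routine, as you say, but it should actually be carried out; at the moment the proposal stops just short of a proof.
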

\begin{proof}
First note that $3(d+2)/(d+5)<3$. We bound $P$ below on the interval $x\in[0,3]$ by taking $x=3$ in terms with negative coefficients and taking $x=0$ in terms with positive coefficients, obtaining
\[
P(x,d)\geq  24d^4 -60d^3-477d^2 -855 d - 810 =: g(d).
\]
The highest order term is $24d^4$, and so $g$ is ultimately positive and increasing in $d$. Note also that
\begin{align*}
g'(d)&= 96 d^3- 180 d^2- 954 d-855\\
g''(d) &= 288d^2-360d-954.
\end{align*}
The function $g''(d)$ is a quadratic polynomial with positive leading coefficient and roots at $d \approx -1.30$ and $2.55$; thus $g'(d)$ is increasing for all $d\geq 3$.  We see that $g'(5)=1875$, so $g$ is increasing for all $d\geq 5$. Finally, $g(7)=6876$, so for all $d\geq7$ we have $g(d)> 0$ and hence $P(x,d)>0$ for all $d\geq 7$ and $x\in[0,3]$.

For $d=3,4,5,6$, we look at the polynomials $P_d(x)=P(x,d)$ directly to show that $P_d(x)>0$ on $[0,3(d+2)/(d+5)]$.  Each $P_d$ is a cubic polynomial in $x$; its first derivative $P_d^\prime(x)$ is quadratic and so the critical points of $P_d(x)$ can all be found exactly.

For $d=4$, $5$, and $6$, direct calculations show $P'_d<0$ on $[0,3(d+2)/(d+5)]$ and $P_d(3(d+2)/(d+5))>0$, so $P_d(x)>0$ on $[0,3(d+2)/(d+5)]$.

For $d=3$, our interval of interest is $[0,15/8]$. We have a critical point \\ $c\approx1.4\in[0,15/8]$, with $P_3'<0$ on $[0,c]$ and $P_3'>0$ on $[c,15/8]$. The critical value $P_3(c)\approx 79$ is positive, so $P_3(x)>0$ on the desired interval $[0,15/8]$.
\end{proof}

\begin{lemma}\label{poly2} The polynomial
\[
Q(x)=\left(1-\frac{3x}{2\ainfsq}\right)(\ainfsq-x)(36-5x)(12+4x)-\Big(36\ainfsq+(6\ainfsq-36)x\Big)(12-7x)
\]
is positive on $[0,12/7]$.
\end{lemma}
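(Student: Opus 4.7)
The plan is to reduce $Q$ to a cubic polynomial and analyze its sign directly on $[0,12/7]$, using rigorous numerical bounds on $\ainfsq$.

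First, I would observe that $Q(0) = 432\ainfsq - 432\ainfsq = 0$, so $x$ is a factor of $Q$. Expanding both products in the definition of $Q$ and collecting terms yields $Q(x) = x\,R(x)$ where
\[
R(x) = (264\ainfsq - 648) + \Big(22\ainfsq - 462 + \tfrac{648}{\ainfsq}\Big)x + \Big(50 + \tfrac{126}{\ainfsq}\Big)x^2 - \tfrac{30}{\ainfsq}x^3.
\]
Since $Q(0)=0$, it suffices to show $R(x) > 0$ on $(0, 12/7]$.

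Next, using that $\ainf = j_{1,1}\approx 1.84118$ is the first positive zero of $J_1'$ and is known to high precision, I would substitute the rigorous enclosure $3.389 < \ainfsq < 3.391$ to fix the sign of each coefficient of $R$. With these bounds the constant term exceeds $246$, the linear coefficient lies near $-196$, the quadratic coefficient exceeds $87$, and the cubic coefficient is near $-8.85$. I would then locate the minimum of $R$ on $[0,12/7]$: since $R$ is cubic, this minimum is attained either at an endpoint or at a root of the quadratic $R'(x)=0$. Applying the quadratic formula to $R'$ shows that exactly one of its two roots lies in $[0,12/7]$, near $x \approx 1.45$ (the other root exceeds $5$). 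Evaluating $R$ at this interior critical point and at the endpoints $0$ and $12/7$ gives values all bounded safely below by a positive constant (roughly $119$, $247$, and $122$ respectively), completing the proof.

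The main obstacle is simply organizing the arithmetic and propagating rational enclosures of $\ainfsq$ cleanly through the coefficients of $R$ and through the quadratic-formula computation of the interior critical point. Because the positive minimum of $R$ on $[0,12/7]$ is not small, only modest precision in $\ainfsq$ is needed, and the proof reduces to elementary numerical verification. No sum-of-squares trick or structural insight appears to shorten the argument significantly, which is presumably why the author isolates this numerical inequality as a separate lemma.
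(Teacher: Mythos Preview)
Your approach is correct and essentially identical to the paper's: both factor out $x$ (noting $Q(0)=0$), reduce to the cubic $g(x)=Q(x)/x$, locate the unique critical point $c\approx 1.4$ in $[0,12/7]$ via the quadratic $g'$, and verify that the minimum value there is positive. The paper reports $g(c)\approx 177.8$ rather than your $\approx 119$, but your arithmetic and explicit coefficient expansion check out, and in any case both values are comfortably positive.
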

\begin{proof}
As in previous cases, $x=0$ is a root of this polynomial, so we examine $g(x):=Q(x)/x$. The derivative $g'(x)$ is a quadratic polynomial, so its roots can be found exactly. We see that $g$ has a critical point $c\approx 1.4$ in $[0,12/7]$, with $g'<0$ on $[0,c]$ and $g'>0$ on $[c,12/7]$. The critical value $g(c)\approx 177.8$ is positive, so $g>0$ on $[0,12/7]$.
\end{proof}

Our final lemma is a simple observation about integrals of monotone and partially monotone functions, which is a special case of more general rearrangement inequalities (see \cite[Chapter 3]{liebloss}).
\begin{lemma}\label{monint}
For any radial function function $F(r)$ that satisfies the partial monotonicity condition \eqref{moncond} for $\Ostar$,
\[
\int_\Omega F\,dx \leq \int_{\Ostar}F\,dx
\]
with equality if and only if $\Omega=\Ostar$.
For any strictly increasing radial function $F(r)$,
\[
\int_\Omega F\,dx \geq \int_{\Ostar}F\,dx
\]
with equality if and only $\Omega=\Ostar$.
\end{lemma}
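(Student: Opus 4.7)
The plan is to observe that since $|\Omega|=|\Ostar|$, the symmetric difference decomposition
\[
\int_\Omega F\,dx-\int_{\Ostar}F\,dx=\int_{\Omega\setminus\Ostar}F\,dx-\int_{\Ostar\setminus\Omega}F\,dx,
\]
together with the equality of measures $|\Omega\setminus\Ostar|=|\Ostar\setminus\Omega|$, reduces both claims to a pointwise comparison of $F$ on two sets of equal measure. I would state this decomposition at the outset, then treat the two assertions in turn.

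For the first statement, I would observe that every $y\in\Omega\setminus\Ostar$ lies outside $\Ostar$, while every $x\in\Ostar\setminus\Omega$ lies inside $\Ostar$. Partial monotonicity \eqref{moncond} for $\Ostar$ then gives $F(x)>F(y)$ whenever $x\in\Ostar\setminus\Omega$ and $y\in\Omega\setminus\Ostar$. Setting $a=\sup_{\Omega\setminus\Ostar}F$ and $b=\inf_{\Ostar\setminus\Omega}F$, we obtain $a\leq b$, and hence
\[
\int_{\Omega\setminus\Ostar}F\,dx\leq a\,|\Omega\setminus\Ostar|\leq b\,|\Ostar\setminus\Omega|\leq\int_{\Ostar\setminus\Omega}F\,dx,
\]
which yields the desired inequality. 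For the equality case, if $\Omega\neq\Ostar$ then $|\Omega\setminus\Ostar|>0$, so picking any $x\in\Ostar\setminus\Omega$ and any $y\in\Omega\setminus\Ostar$ one has $F(x)>F(y)$ strictly; a standard continuity/measure argument (or simply the strict inequality $a<b$ under these hypotheses, after a trivial density reduction) shows the overall inequality is strict.

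For the second statement, the key observation is that since $F$ is radial and strictly increasing, $F(y)>F(x)$ holds whenever $|y|>|x|$. But $y\in\Omega\setminus\Ostar$ forces $|y|>R^*$ (the radius of $\Ostar$), while $x\in\Ostar\setminus\Omega$ forces $|x|<R^*$; so $F(y)>F(x)$ for any such pair. The same supremum/infimum argument as above then gives
\[
\int_{\Omega\setminus\Ostar}F\,dx\geq\int_{\Ostar\setminus\Omega}F\,dx,
\]
with strict inequality unless $|\Omega\triangle\Ostar|=0$, i.e.\ $\Omega=\Ostar$.

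There is no real obstacle here; the lemma is essentially a one-line rearrangement fact. The only point deserving care is the equality statement, which must use the strictness of the hypothesis (strict partial monotonicity, respectively strict monotonicity of $F$) to rule out nontrivial symmetric difference. This matches the general pattern of the bathtub principle and rearrangement inequalities referenced in \cite[Chapter 3]{liebloss}.
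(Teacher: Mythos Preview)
Your proposal is correct and follows essentially the same approach as the paper: both decompose via the symmetric difference, use $|\Omega\setminus\Ostar|=|\Ostar\setminus\Omega|$, and bound $\int_{\Omega\setminus\Ostar}F$ against $\int_{\Ostar\setminus\Omega}F$ through the sup/inf comparison afforded by partial monotonicity. The only cosmetic difference is that the paper handles the strictly increasing case by applying the first part to $-F$, whereas you argue it directly; both routes are equally short.
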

\begin{proof} Note that $|\Omega|=|\Ostar|$ with $|\Omega\backslash\Ostar|=|\Ostar\backslash\Omega|$. Suppose $F$ satisfies \eqref{moncond} for $\Ostar$. The result follows from decomposing the domain:
\begin{align*}
\int_\Omega F\,dx &=\int_{\Omega\cap\Ostar}F\,dx+\int_{\Omega\setminus\Ostar}F\,dx\\
&\leq \int_{\Omega\cap\Ostar}F\,dx+\sup_{x\in\Omega\setminus\Ostar}|F(x)||\Omega\setminus\Ostar|  \\
&\leq \int_{\Omega\cap\Ostar}F\,dx+\inf_{x\in\Ostar\setminus\Omega}|F(x)||\Ostar\setminus\Omega|&&\text{since $F$ satisfies \eqref{moncond}.}\\
&\leq \int_{\Omega\cap\Ostar}F\,dx+\int_{\Ostar\setminus\Omega}F\,dx\\
&=\int_\Ostar F\,dx.
\end{align*}
Note that if $|\Omega\backslash\Ostar|>0$, either the second inequality or the third is strict by the strict inequality in \eqref{moncond}. If $F$ is strictly increasing, then apply the first part of the Lemma to the function $-F$.
\end{proof}

We can now prove our main result.
\begin{proof}[Proof of Theorem~\ref{thm1}]
By rescaling as described after Lemma~\ref{scaling}, it suffices to prove the theorem for $\Omega$ with volume equal to that of the unit ball, so that $\Ostar$ is the unit ball. We may also translate $\Omega$ as in Lemma~\ref{trialfcn}, which of course leaves the fundamental tone unchanged. Then,
\begin{align*}
\omega &\leq \frac{\int_\Omega N[\rho]\,dx}{\int_\Omega \rho^2\,dx} &&\text{by Lemma~\ref{lemmaboundRC}}\\
&\leq \frac{\int_\Ostar N[\rho]\,dx}{\int_\Ostar \rho^2\,dx}&&\text{by Lemmas~\ref{mondenom}, \ref{monnum}, and~\ref{monint}}\\
&=\ostar,
\end{align*}
by applying the equality condition in Lemma~\ref{lemmaboundRC}. Finally, if equality holds, then $\Omega$ must be a ball, by the equality statement in Lemma~\ref{monint}.
\end{proof}

\chapter{One dimension: the free rod}\label{1dim}

The free rod is the one-dimensional case of the free plate. We do not have an isoperimetric inequality for the free rod, because all connected domains of the same area are now intervals of the same length, identical up to translation. However, we do have a one-dimensional analog of Theorem~\ref{thm2}, which identifies the fundamental tone of the circular free plate. In higher dimensions, we showed the fundamental mode of the free plate under tension had angular dependence; in one dimension, we will show the fundamental mode of the free rod under tension is an odd function about the center point. In this chapter we prove the one-dimensional analog of Theorem~\ref{thm2} and classify most eigenfunctions of the free rod under tension and compression.

We take $\Omega = [-1,1]$; the general case follows from rescaling and translation. The Rayleigh quotient in one dimension is
\[
  Q[u] = \frac{\int_{-1}^1|u^{\prime\prime}|^2 + \tau |u^\prime|^2\,dx}{\int_{-1}^1|u|^2\,dx}.
\]
From this quotient, we obtain the eigenvalue equation
\begin{equation}
        u^{\prime\prime\prime\prime}-\tau u^{\prime\prime} = \omega u \label{1dimDE}
\end{equation}
and the natural boundary conditions
\begin{align}
 u^{\prime\prime}&=0 &\text{at $x=\pm 1$,} \label{1dbcm}\\
 \tau u^\prime - u^{\prime\prime\prime}&=0 &\text{at $x=\pm 1$.}\label{1dbcv}
\end{align}
As in the case of the ball in $\RR^d$, we find the general form of the eigenfunctions by factoring the eigenfunction equation \eqref{1dimDE}. The factorization depends on the sign of the eigenvalue $\omega$ and in some cases on the value of $\tau$ relative to $\omega$. We will therefore treat positive, zero, and negative eigenvalues as separate cases. In each case, we will use the boundary conditions to help further identify form of the eigenfunctions.

First we show we need only consider even and odd solutions, before embarking on the classification. In higher dimensions, we were concerned with angular dependence in our solutions, so it is natural in one dimension to consider the symmetries of the solutions about the center point. Note that if $u(x)$ is an eigenfunction, then by symmetry so is $u(-x)$, with the same eigenvalue $\omega$. The even and odd parts of $u$ can be expressed as
\[
u_o(x)=\frac{u(x)-u(-x)}{2} \quad \text{and} \quad u_e(x)=\frac{u(x)+u(-x)}{2},
\]
which are either solutions of \eqref{1dimDE} with the same eigenvalue, or (in the case that $u$ is purely odd or purely even), one of them is zero everywhere. Because $u(x)$ and $u(-x)$ both satisfy the boundary conditions, $u_e$ and $u_o$ will also satisfy them. Since every eigenfunction is a linear combination of its odd and even parts, it suffices to look only for even and odd eigenfunctions. We will refer to eigenvalues associated with even and odd eigenfunctions as even and odd eigenvalues, respectively.

\section*{Positive eigenvalues}
In the case of positive eigenvalues, the general forms of the eigenfunctions do not depend on the sign of $\tau$. As in the case of the plate, positive eigenvalues correspond to vibrational frequencies of the rod; the corresponding eigenfunctions are then vibrational modes. In higher dimensions and taking $\Omega$ to be the ball, we saw solutions involving Bessel $J_l$ and $I_l$ functions; here the corresponding solutions are trigonometric and hyperbolic trigonometric functions.

\begin{fact} \label{posclass} Given $\tau\in\RR$, a positive number $\omega$ is an eigenvalue if and only if it has an associated eigenfunction $u$ with $\omega$ and $u$ satisfying exactly one of the following:
\begin{enumerate}
 \item [(i)] We have $u= u_o(x)=A\sin{ax}+B\sinh{bx}$ with $A$, $B$ real nonzero constants satisfying
\[
  \frac{B}{A}=\frac{a^2\sin a}{b^2\sinh b}
\]
and $a$, $b$ positive numbers satisfying $a^2b^2=\omega$, $b^2-a^2=\tau$, and $W_o(a) = a^3\sin a\cosh b - b^3\cos a\sinh b=0$.
\item[(ii)] We have $u=u_e(x)=C\cos{ax}\cosh{bx}$ with $C$, $D$ real nonzero constants satisfying
\[
\frac{D}{C}=\frac{a^2\cos a}{b^2\cosh b}.
\]
and $a$, $b$ positive numbers satisfying $a^2b^2=\omega$, $b^2-a^2=\tau$, and $W_e(a) = a^3\cos a\sinh b + b^3\sin a\cosh b = 0$.
\end{enumerate}
\end{fact}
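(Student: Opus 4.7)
The plan is to factor the eigenvalue equation \eqref{1dimDE}, use the symmetry of $\Omega=[-1,1]$ to split into purely odd and purely even candidates, and then impose the two natural boundary conditions at the single endpoint $x=1$. Because $\omega>0$ and $\tau\in\RR$, the quadratic $t^2-\tau t-\omega=0$ has one positive root and one negative root, so there exist unique positive $a,b$ with $a^2b^2=\omega$ and $b^2-a^2=\tau$ (namely $b^2=\tau/2+\sqrt{(\tau/2)^2+\omega}$ and $a^2=-\tau/2+\sqrt{(\tau/2)^2+\omega}$). The equation then factors as $(\partial_x^2+a^2)(\partial_x^2-b^2)u=0$, yielding the four-parameter general solution
\[
u(x)=A\sin(ax)+A'\cos(ax)+B\sinh(bx)+B'\cosh(bx).
\]

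Since eigenfunctions may be decomposed into odd and even parts (both still eigenfunctions for the same $\omega$ satisfying all boundary conditions), it suffices to treat $u_o=A\sin(ax)+B\sinh(bx)$ and $u_e=C\cos(ax)+D\cosh(bx)$ separately. For odd $u_o$, the functions $u_o''$ and $u_o$ are odd while $u_o',u_o'''$ are even, so the boundary conditions \eqref{1dbcm}--\eqref{1dbcv} at $x=-1$ follow automatically from those at $x=1$; the analogous parity observation holds for $u_e$. Thus each case reduces to a pair of linear equations in two unknowns.

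For the odd case, $u_o''(1)=0$ gives $-Aa^2\sin a+Bb^2\sinh b=0$, hence the first ratio $B/A=a^2\sin a/(b^2\sinh b)$. The condition $\tau u_o'(1)-u_o'''(1)=0$ reads $Aa(\tau+a^2)\cos a+Bb(\tau-b^2)\cosh b=0$, which using $b^2-a^2=\tau$ collapses to $Aab^2\cos a-Ba^2 b\cosh b=0$, i.e.\ $B/A=b\cos a/(a\cosh b)$. Equating the two ratios yields precisely $W_o(a)=a^3\sin a\cosh b-b^3\cos a\sinh b=0$. The even case is entirely parallel: $u_e''(1)=0$ gives $D/C=a^2\cos a/(b^2\cosh b)$, and the V condition, after the same simplification $\tau+a^2=b^2$, $\tau-b^2=-a^2$, gives $D/C=-b\sin a/(a\sinh b)$; consistency of these ratios is exactly $W_e(a)=0$.

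For the converse, given positive $a,b$ satisfying the stated relations together with $W_o(a)=0$ (resp.\ $W_e(a)=0$), the function $u_o$ (resp.\ $u_e$) with the prescribed coefficient ratio is by construction a nontrivial smooth solution of the ODE meeting both boundary conditions, so $\omega=a^2b^2$ is indeed an eigenvalue. The only real obstacle is bookkeeping: tracking signs when differentiating $\sin,\cos,\sinh,\cosh$ and disciplining oneself to substitute $b^2-a^2=\tau$ at the right moment, so that the messy factors $\tau\pm a^2$ and $\tau-b^2$ collapse to the clean products $b^2$ and $-a^2$ that produce the stated closed forms for $W_o$ and $W_e$.
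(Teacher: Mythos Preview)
Your proof is correct and follows essentially the same route as the paper: factor the characteristic polynomial using $\omega>0$ to obtain one trigonometric and one hyperbolic pair, split by parity, and reduce the two boundary conditions at $x=1$ to a $2\times 2$ linear system whose solvability condition is $W_o(a)=0$ or $W_e(a)=0$. Your sign and derivative bookkeeping is accurate, and the substitution $\tau+a^2=b^2$, $\tau-b^2=-a^2$ is exactly what the paper uses to collapse the $V$-condition.

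One small point you pass over that the paper makes explicit: the statement asserts $A,B$ (resp.\ $C,D$) are \emph{nonzero}, and this needs a sentence. If $A=0$ in $u_o$, the first boundary condition forces $Bb^2\sinh b=0$, hence $B=0$; if $B=0$, the two conditions become $Aa^2\sin a=0$ and $Aab^2\cos a=0$, and since $\sin a,\cos a$ cannot vanish together, $A=0$. The even case is symmetric. So a nontrivial odd or even eigenfunction must have both coefficients nonzero, and in particular the ratios $B/A$, $D/C$ you write down are well defined. Adding this check makes the ``exactly one'' and ``nonzero constants'' clauses of the statement fully justified.
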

\noindent We will also prove a theorem identifying the fundamental tone of the free rod under tension::
\begin{thm2pp}
 For all $\tau>0$, we have that the even and odd eigenvalues are interlaced. In particular, the fundamental mode is an odd function.
\end{thm2pp}

We prove Fact~\ref{posclass} first.
\begin{proof}[Fact~\ref{posclass}]
 We begin by factoring the eigenvalue equation to establish the general form of the even and odd solutions. We will then use the boundary conditions to obtain the desired values for $B/A$ and $D/C$ and show that they are nonzero and well-defined.

When $\omega$ is positive, regardless of the value of $\tau$, we can factor the eigenvalue equation \eqref{1dimDE} as
\[
  (d_x^2 + a^2)(d_x^2 - b^2)u = 0,
\]
where $\omega = a^2b^2$ and $\tau = b^2-a^2$. Since $\omega >0$, $a$ and $b$ must be nonzero as well; we take them to be positive. Recall that we need only look for even and odd solutions; all others will be linear combinations of these.

Solutions to $(d_x^2 + a^2)u=0$ are of the form $e^{\pm a i x}$; we express them as the odd and even trigonometric functions $\sin xa$ and $\cos ax$. Solutions $(d_x^2 - b^2)u=0$ are of the form $e^{\pm b x}$, or $\sinh bx$ and $\cos bx$. Any solution $u$ to \eqref{1dimDE} is then a linear combination of these terms; since we need only consider odd and even functions, we obtain
\begin{equation}
  u_o(x) = A\sin ax +B\sinh bx  \quad \text{and} \quad  u_e(x) = C\cos ax +D\cosh bx.
\end{equation}
Applying the boundary conditions to $u_o$ and $u-e$, we obtain:
\begin{align}
a^2A\sin a = b^2B\sinh b &\quad\text{and}\quad ab^2A\cos a=a^2bB\cosh a\label{AB1}\\
a^2C\cos a = b^2D\cosh b&\quad\text{and}\quad-ab^2C\sin a=a^2bD\sinh a \label{CD1}
\end{align}

We will now show that for odd solutions $u_o$, both $A$ and $B$ are nonzero, and that for nonconstant even solutions $u_e$, both $C$ and $D$ are nonzero.

If $A=0$ but $B$ nonzero, then the odd eigenstate is $u_o(x)=B\sinh(bx)$ and the boundary conditions \eqref{AB1} become
\[
b^2B\sinh(b)=0 \quad\text{and}\quad a^2bB\cosh(a)=0.
\]
Since $b$ is positive and $B$ is nonzero, the first condition cannot be satisfied.

If $B=0$ but $A$ nonzero, then $u_o(x)=A\sin(a)$ and the boundary conditions \eqref{AB1} become
\[
-Aa^2\sin(a) = 0 \quad\text{and}\quad ab^2A\cos(a) = 0.
\]
Since $a$, $b$, and $A$ are nonzero and $\sin(a)$ and $\cos(a)$ cannot be simultaneously zero, we cannot satisfy both conditions.

If $C=0$, our even eigenstate is $u_e(x)= D\cosh(bx)$, and the boundary conditions \eqref{CD1} become
\[
b^2D\cosh(b)=0 \quad\text{and}\quad a^2bD\sinh(b)=0.
\]
Again, $b>0$, so the only way both equations can be satisfied is if $D=0$, giving us $u_e(x)=0$ a constant eigenfunction.

If $D=0$, we have $u_e(x)= C\cos(ax)$ and the boundary conditions \eqref{CD1} become
\[
a^2C\cos(a)=0 \quad\text{and}\quad -ab^2C\sin(a)=0.
\]
Since $a$, $b$ are nonzero and $\sin(a)$ and $\cos(a)$ cannot be simultaneously zero, we can only satisfy both conditions if $C=0$, giving us a constant eigenfunction $u_e(x)=0$.

Note that since $b>0$, we have $\sinh b$ positive. To obtain the ratios in Fact~\ref{posclass}, we solve the first boundary condition in each of \eqref{AB1} and \eqref{CD1}.

It remains to show that $a$ and $b$ satisfy the conditions $W_o(a)=0$ for the odd eigenfunction $u_o$ and $W_e(a)=0$ for the even eigenfunction $u_e$.

We first  consider the odd solution $u_o(x) = A\sin ax  + B\sinh bx$. As in the higher-dimensional case, the two boundary conditions $Mu=u_o^{\prime\prime}=0$ and $Vu=\tau u_o^\prime-u_o^{\prime\prime\prime}=0$ give us a pair of homogeneous equations which are linear in $A$ and $B$. Thus we must have the determinant vanish:
\[
0 = (-a^2\sin a)(-a^2b \cosh b)-(b^2\sinh b)(ab^2\cos a)
\]
As $a$ and $b$ are nonzero, this simplifies to
\[
0 = a^3\sin a\cosh b-b^3\cos a\sinh b =:W_o(a).
\]
Recall $b^2=\tau+a^2$. Thus, for a given $\tau$, our candidate $u_o(x)$ is a solution to the boundary value problem if $a$ satisfies $W_o(a)=0$ and $A/B$ is taken to satisfy either (and hence both) of the boundary conditions.

Now we consider the even solution $u_e(x) = C\cos ax  + D\cosh bx$. The constants $C$ and $D$ are assumed to both be nonzero by Fact~\ref{posclass}and are subject to the constraints \eqref{CD1}. Again we must have the determinant vanish:
\[
0 =-a^2\cos a(\mp a^2b\sinh b)-b^2\cosh bab^2(\mp \sin a).
\]
That is,
\[
0= a^3\cos a\sinh b + b^3\sin a\cosh b=:W_e(a).
\]
Then our candidate $u_e(x)$ is a solution to the boundary value problem if $a$ is a root of $W_e$ and $C/D$ is taken to satisfy either (and hence both) of the boundary conditions.
\end{proof}

The proof of Theorem~2$^{\prime\prime}$ requires the following lemma:

\begin{lemma}\label{fefo}Let $\tau>0$ and write $b = \sqrt{a^2+\tau}$. The nontrivial zeros of the functions
\begin{align*}
W_e(a) &= a^3\cos a\sinh b + b^3\sin a\cosh b\quad\text{and}\\
W_o(a) &= a^3\sin a\cosh b - b^3\cos a\sinh b.
\end{align*}
on $[0,\infty)$ are distinct and are interlaced. In particular, $W_o$ has a zero in $(0,\pi /2)$, and the first nontrivial zero of $W_e \in (\pi/2,\infty)$.
\end{lemma}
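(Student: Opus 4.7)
The strategy is to recast the vanishing of $W_o$ and $W_e$ as transcendental equations of the form $\tan a = \phi(a)$ whose right-hand sides have a controlled sign, and then exploit the sign pattern of $\tan$ together with the intermediate value theorem. First I would check that neither $W_o$ nor $W_e$ can vanish at $a = k\pi/2$ for $k \geq 1$: direct substitution gives, for instance, $W_o(k\pi) = -(-1)^k b^3 \sinh b$ and $W_o(k\pi + \pi/2) = (-1)^k(k\pi+\pi/2)^3 \cosh b$, and similarly $W_e(k\pi) = (-1)^k(k\pi)^3 \sinh b$ and $W_e(k\pi + \pi/2) = (-1)^k b^3 \cosh b$, all nonzero for $\tau > 0$. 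Therefore at every nontrivial zero we have $\cos a \neq 0$, and dividing by $\cos a \cosh b$ gives the equivalences
\begin{align*}
W_o(a) = 0 &\iff \tan a = (b/a)^3 \tanh b, \\
W_e(a) = 0 &\iff \tan a = -(a/b)^3 \tanh b.
\end{align*}

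Because $\tau > 0$ forces $b = \sqrt{a^2+\tau} > 0$ and hence $\tanh b > 0$, the right-hand side of the first equivalence is strictly positive and that of the second is strictly negative. Consequently every nontrivial zero of $W_o$ must lie in some interval $(k\pi, k\pi + \pi/2)$, where $\tan a > 0$, while every zero of $W_e$ must lie in some interval $((k+\tfrac{1}{2})\pi, (k+1)\pi)$, where $\tan a < 0$. These two families of open intervals are disjoint and alternate along $(0, \infty)$, so the nontrivial zeros of $W_o$ and $W_e$ are automatically distinct and interlaced.

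For the specific claims, I would apply the IVT to $f_o(a) := \tan a - (b/a)^3 \tanh b$ on $(0, \pi/2)$: as $a \to 0^+$ we have $\tan a \to 0$ while $(b/a)^3 \tanh b \to +\infty$ (since $b \to \sqrt{\tau} > 0$), so $f_o \to -\infty$; as $a \to (\pi/2)^-$, $\tan a \to +\infty$ and the subtracted term is bounded, so $f_o \to +\infty$. This yields a zero of $W_o$ in $(0, \pi/2)$. Meanwhile the sign analysis above rules out any zero of $W_e$ in $(0, \pi/2]$, so the first nontrivial zero of $W_e$ lies in $(\pi/2, \infty)$ (in fact in $(\pi/2, \pi)$ by the same kind of IVT argument applied to $\tan a + (a/b)^3 \tanh b$ on that interval). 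The only real technical point is the preliminary check that $\cos a \neq 0$ at any nontrivial zero, which legitimises the $\tan a$ reformulation; after that the argument is pure sign-tracking.
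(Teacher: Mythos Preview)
Your reformulation via $\tan a$ is clean and correctly proves distinctness and both ``In particular'' claims: once you have checked that neither $W_o$ nor $W_e$ vanishes at any $a=k\pi/2$, the equivalences
\[
W_o(a)=0 \iff \tan a = (b/a)^3\tanh b>0,\qquad
W_e(a)=0 \iff \tan a = -(a/b)^3\tanh b<0
\]
force the zeros of $W_o$ into $\bigcup_{k\ge 0}(k\pi,(k+\tfrac12)\pi)$ and those of $W_e$ into $\bigcup_{k\ge 0}((k+\tfrac12)\pi,(k+1)\pi)$, and the IVT arguments you describe produce a zero of each function in every such interval.

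However, ``zeros lie in alternating disjoint intervals, with at least one in each'' is weaker than ``the zeros are interlaced.'' Nothing in your argument rules out, say, three zeros of $W_o$ in $(0,\pi/2)$ followed by a single zero of $W_e$ in $(\pi/2,\pi)$; in that scenario the first two zeros of $W_o$ would have no zero of $W_e$ between them, and interlacing would fail. The missing ingredient is \emph{uniqueness} of the zero in each subinterval. This is exactly what the paper's proof targets: it differentiates $W_e$ and $W_o$, groups the result by $\sin a$ and $\cos a$, and uses the sign pattern of those factors to show $W_e$ (respectively $W_o$) is monotone on each interval $(\tfrac\pi2+k\pi,(k+1)\pi)$ (respectively $(k\pi,\tfrac\pi2+k\pi)$), yielding exactly one zero there. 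Your tangent reformulation trades away that monotonicity information. To close the gap along your lines you would need to show, for example, that $a\mapsto \tan a-(b/a)^3\tanh b$ is strictly increasing on each $(k\pi,(k+\tfrac12)\pi)$, which is essentially a derivative computation of the same flavor as the paper's.

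That said, for the downstream application (Theorem~2$''$: the fundamental mode of the free rod is odd), only the ``In particular'' statements are actually used, and your argument establishes those cleanly and with less computation than the paper's approach.
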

\begin{proof} We consider $W_e$ first. Recall $\tau=a^2+b^2$; viewing $a$ as a variable independent of $\tau$, we differentiate and obtain
\[
 \frac{d b}{d a}=\frac{-a}{b}.
\]
Then
\begin{align*}
 W_e^\prime(a)&=3a^2\cos a\sinh b-a^3\sin a\sinh b-a^4b^{-1}\cos a \cosh b \\
&\qquad -3ab\sin a \cosh b + b^3\cos a \cosh b - ab^2\sin a \sinh b\\
&=-a^3\sin a\sinh b-3ab\sin a \cosh b- ab^2\sin a \sinh b\\
&\qquad +(\tau^2+2a^2\tau)b^{-1}\cos a \cosh b +3a^2\cos a\sinh b.
\end{align*}
Recall $b$ and hence $\sinh b$ are positive. For $a$ in the intervals $(\pi/2+2k\pi, \pi+2k\pi)$ with $k$ any nonnegative integer, we have $\sin a > 0$ and $\cos a <0$, giving us $W_o^\prime(a)<0$. When $a\in(3\pi/2+2k\pi, 2\pi+2k\pi)$, the signs are reversed, and $W_o^\prime(a)>0$.

Similarly, we find that $W_e(a)>0$ on $[2k\pi,\pi/2+2k\pi]$ and $W_e(a)<0$ on $[\pi+2k\pi,3\pi/2+2k\pi]$. Therefore, $W_e$ has exactly one zero in each interval $(\pi/2+k\pi, (k+1)\pi)$ for $k$ a nonnegative integer, and is nonzero everywhere else

Now consider $W_o$. Differentiating and simplifying, we obtain
\begin{align*}
 W_o^\prime(a)= a^3\cos a \cosh b +3ab\cos a \sinh +ab^2\cos a \cosh b\\
3a^2\sin a \cosh b+(\tau^2+2a^2\tau)\sin a\sinh b.
\end{align*}
Examining the sign of the trigonometric terms as before, we see $W_o^\prime(a)>0$ on $(2k\pi,\pi/2+2k\pi)$ and $W_o^\prime(a)<0$ on $(\pi + 2k\pi,3\pi/2+2k\pi)$ for $k$ a nonnegative integer. We also have $W_o(a)>0$ on $[\pi/2+2k\pi, \pi+2k\pi]$ and $W_o(a)<0$ on $[3\pi/2+2k\pi, 2\pi+2k\pi]$.  Thus, $W_o$ has exactly one zero on $(k\pi,\pi/2+k\pi)$ for each nonnegative integer $k$, and is nonzero everywhere else. \end{proof}

We can now prove Theorem~2$^{\prime\prime}$.
\begin{proof}[Proof of Theorem~2$^{\prime\prime}$]
We see by examining the Rayleigh quotient that for $\tau\geq 0$, all eigenvalues are nonnegative. Furthermore, the eigenvalue $\omega=0$ corresponds to the constant eigenfunction and has multiplicity one; all higher eigenvalues are positive.

By Fact~\ref{posclass}, positive eigenvalues are given by $\omega=a^2(a^2+\tau)$, with $a$ a root of $W_e$ or $W_0$. It is clear $\omega$ is increasing in $a$ for $\tau>0$, so the eigenvalues for odd and even modes are interlaced if the zeroes of $W_e$ and $W_o$ are. We have this interlacing by Lemma~\ref{fefo}. Also by this lemma, $W_o$ has the first nontrivial root in $(0,\pi/2)$, and so the fundamental mode is odd.
\end{proof}

As in the higher-dimensional case, Theorem~2$^{\prime\prime}$ only considers positive tension. If $\tau>0$, the Rayleigh quotient is nonnegative, and so we have only nonnegative eigenvalues. Furthermore, the eigenvalue $\omega=0$ is nondegenerate for positive $\tau$.

Our work for negative tension will show that in this regime the eigenvalues often cross, and so the theorem does not hold here.

\section*{Zero eigenvalues}
An eigenvalue of zero corresponds to non-vibrational translation of the rod. Other than the constant eigenstate $u=c$, these eigenstates are only possible when $\tau$ is nonpositive. When tension vanishes, we have an odd eigenstate and $\omega=0$ is of multiplicity 2. For negative tension, the rod is under compression, and the eigenvalue $\omega=0$ is only degenerate when $\sqrt{|\tau|}$ is a root of $\sin x$ or $\cos x$, and in these cases has multiplicity 2.

\begin{fact}\label{zeroclass} For all real values of $\tau$, $\omega=0$ is an eigenvalue and the constant function is an associated eigenfunction. The eigenvalue $\omega=0$ is degenerate if and only if it has an associated nonconstant eigenfunction $u$ satisfying one of the following:
\begin{enumerate}
 \item [(i)] (Zero tension) The parameter $\tau = 0$ and we have $u=u_o(x)=Ax$ where $A$ is a nonzero real constant.
 \item [(ii)] (Negative tension) The parameter $\tau<0$ and $u$ satisfies one of the following:
 \begin{enumerate}
 \item We have $u=u_o(x)=B\sin{ax}$ where $a=\sqrt{|\tau|}$ and $B$ is a nonzero real constant, and $\sin\sqrt{|\tau|}=0$.
 \item We have $u=u_e(x)=D\cos{ax}$ where $a=\sqrt{|\tau|}$ and $D$ is a nonzero real constant, and $\cos\sqrt{|\tau|}=0$.
 \end{enumerate}
 \end{enumerate}
For all other values of $\tau$, the eigenvalue $\omega=0$ is nondegenerate.
\end{fact}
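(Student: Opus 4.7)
The plan is to set $\omega=0$ in the ODE \eqref{1dimDE}, giving $u''''-\tau u''=0$, whose characteristic polynomial $r^2(r^2-\tau)$ has a double root at $0$ together with the pair $\pm\sqrt{\tau}$. The general solution therefore splits by the sign of $\tau$:
\[
u=\begin{cases} a_0+a_1 x+c_1\cosh(\sqrt{\tau}\,x)+c_2\sinh(\sqrt{\tau}\,x), & \tau>0,\\ a_0+a_1 x+c_1 x^2+c_2 x^3, & \tau=0,\\ a_0+a_1 x+c_1\cos(\alpha x)+c_2\sin(\alpha x), & \tau<0,\end{cases}
\]
where $\alpha:=\sqrt{|\tau|}$. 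In each case the classification reduces to a small linear algebra problem in the four constants $a_0,a_1,c_1,c_2$ once the boundary conditions \eqref{1dbcm}--\eqref{1dbcv} are imposed.

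For $\tau>0$ I would first apply $u''(\pm 1)=0$; adding and subtracting the two equations yields $c_1\cosh\sqrt{\tau}=c_2\sinh\sqrt{\tau}=0$, forcing $c_1=c_2=0$ since $\sinh\sqrt\tau,\cosh\sqrt\tau>0$. The remaining condition $Vu=\tau u'-u'''=\tau a_1=0$ then forces $a_1=0$, so only the constant survives. For $\tau=0$, direct differentiation gives $u'''=6c_2$ and $u''=2c_1+6c_2 x$, so $Vu=-u'''$ and $Mu$ at the endpoints force $c_1=c_2=0$; the surviving $a_0+a_1 x$ satisfies both boundary conditions trivially, yielding the nonconstant odd eigenfunction $u_o=Ax$.

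The main case is $\tau<0$, and the pleasant observation that makes it clean is the exact cancellation
\[
Vu=\tau u'-u''' = -\alpha^2 a_1,
\]
since $(\cos\alpha x)'''=-\alpha^2(\cos\alpha x)'$ and similarly for $\sin\alpha x$, so the oscillatory contributions to $\tau u'$ and $-u'''$ cancel identically. Hence $Vu(\pm 1)=0$ immediately forces $a_1=0$, and $u''(\pm 1)=0$ decouples by adding and subtracting into $c_1\cos\alpha=0$ and $c_2\sin\alpha=0$. This gives the stated trichotomy: generically $c_1=c_2=0$ and $\omega=0$ is simple; if $\cos\sqrt{|\tau|}=0$ then $c_1$ is free and the nonconstant part is $u_e=D\cos(\alpha x)$; if $\sin\sqrt{|\tau|}=0$ then $c_2$ is free and the nonconstant part is $u_o=B\sin(\alpha x)$. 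These exceptional subcases are mutually exclusive because $\sin^2\alpha+\cos^2\alpha=1$, so the multiplicity of $\omega=0$ is at most two. I do not anticipate a serious obstacle here; the only thing requiring a moment's thought is the cancellation above, which isolates the obstruction entirely in $a_1$ and is what makes the whole classification so clean.
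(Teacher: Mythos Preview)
Your proposal is correct and follows essentially the same route as the paper: factor the characteristic polynomial, write the general solution according to the sign of $\tau$, and impose the boundary conditions. The ``pleasant cancellation'' you highlight for $\tau<0$ is exactly the paper's observation that $a\tau+a^3=0$ when $a=\sqrt{|\tau|}$, which kills the trig contribution to the $V$ condition and isolates the linear coefficient. The only difference worth noting is that for $\tau>0$ the paper bypasses the ODE entirely by observing that the Rayleigh quotient numerator $\int (u'')^2+\tau(u')^2$ vanishes only for constants; your direct computation via $u''(\pm1)=0$ then $\tau a_1=0$ is equally short and keeps the treatment uniform across all three signs of $\tau$.
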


\begin{proof}
Note that the form of the solutions depends on the sign of $\tau$; we will treat each as a separate case. The positive and zero tension cases are fairly easy to solve; we only need to factor \eqref{1dimDE} when $\tau<0$.

Positive tension.  When $\tau>0$,  the numerator of the Rayleigh Quotient $Q$ can be zero only when $u_x=0$ almost everywhere. Functions with derivative of zero are themselves constant, and so the only solution to \eqref{1dimDE} with $\omega=0$ and $\tau=0$ is the constant solution $u=C$. Note that in this case we have no odd solutions.

Zero tension. Here we have $\omega$ and $\tau$ both zero; the eigenvalue equation becomes
\[
 u^{\prime\prime\prime\prime}=0,
\]
which has general solution
\[
u(x) = Ax^3+Bx^2+Cx+D.
\]
Our first boundary condition, $u''(\pm 1)=0$, gives us
\[
0=u''(\pm 1)=\pm 6A+2B,
\]
so we must have $A$ and $B$ both zero. The second boundary condition then yields $u'''(\pm 1)=0$, and gives us no additional information. Thus, $u(x)=Cx+D$ is a solution to the boundary problem for any $C$, $D\in \RR$. The even solutions are then the constant functions $u=D$; odd solutions are given by $u=Cx$.

Negative tension. When $\tau<0$ we have a free rod under compression; the proof proceeds similarly to the positive eigenvalue case, Fact~\ref{posclass}.

As in the classification of positive eigenvalues, we factor the eigenfunction equation to get the general form of the solution and then apply the boundary conditions to get constraints on the coefficients $A$, $B$, $C$, and $D$, and on possible values for $\tau$.

Since $\omega$ is zero, the eigenvalue equation \eqref{1dimDE} factors as
\[
d_x^2(d_x^2 - \tau)u = 0,
\]
so we must consider solutions to $u''=0$ and $u''-\tau u=0$. Solutions to the first factor are of the form $u=Ax + B$. The solutions of $u''-\tau u=0$ are all of the form $A\cos(ax)+B\sin(ax)$, with $a=\sqrt{|\tau|}$. The even and odd solutions to \eqref{1dimDE} are then of the form
\[
u_o(x)=Ax+B\sin(ax) \quad \text{and}\quad u_e(x) = C+D\cos(ax)
\]
and must satisfy the boundary conditions
\begin{align*}
-Ba^2\sin(a)&=0 &\text{and}\quad &A\tau +B(a\tau+a^3)\cos(a)=0 &\text{for $u_o$,}\\
-Da^2\cos(a)&=0 &\text{and}\quad &-D(a\tau+a^3)\sin(a)=0&\text{for $u_e$.}
\end{align*}
Consider the odd solutions first. Because $a=\sqrt{|\tau|}$, we have $a\tau+a^3=0$, and so the second condition for $u_o$ reduces to $A=0$. We then have $u_o(x)=B\sin(ax)$; we assume $B$ is nonzero so that $u_o$ is not even. The first boundary condition then can only be satisfied if $a$ or $\sin a$ vanish. The parameter $\tau$ is nonzero, so we must have $\sin(a)=0$.

Therefore odd solutions are all of the form $B\sin ax$ and exist only when $|\tau|$ is a root of $\sin\sqrt{z}$.

Now consider the even solutions. The second condition for $u_e$ simplifies by $a\tau+a^3=0$ to $0=0$, giving us no information. The first condition requires $D=0$ or $\cos(a)=0$. Hence, all even solutions will be of the form $u_e(x)=C+D\cos(ax)$ with $\cos(a)=0$. The boundary conditions give us no relationship between the coefficients, so $C$ and $D$ may be any real numbers. These solutions will only occur only when $|\tau|$ is a root of $\sin\sqrt{z}$.
\end{proof}

\section*{Negative eigenvalues}
All negative eigenvalues and their associated eigenfunctions correspond to the buckling of the free rod under compression. The Rayleigh quotient can only be negative when $\tau$ is negative, which corresponds to a rod under compression rather than tension.

Finding the solutions for negative eigenvalues is more complicated than either the positive or zero eigenvalue cases. The factorization of \eqref{1dimDE}, and hence the general form of the solutions, depends on a relationship between $\tau$ and $\omega$. There are three regimes, named for the behavior of the solutions there, and we will prove our results in each regime separately. Unfortunately, we cannot complete our classification in the case of the hyperbolic regime; we provide only a necessary condition for a number $\omega$ to be an eigenvalue in this regime.

\begin{fact} \label{negclass} Given $\tau$, a number $\omega\geq -\tau^2/4$ is a negative eigenvalue if and only if it has an associated eigenfunction $u$ with $u$, $\omega$ satisfy exactly one of the following conditions:
\begin{enumerate}
 \item [(i)] (trigonometric regime) $\omega>-\tau^2/4$ and $u$ satisfies one of the following:
\begin{enumerate}
\item We have $u= u_o(x)=A\sin{ax}+B\sin{bx}$ with $A$, $B$ real nonzero constants satisfying
\[
\frac{A}{B}=-\frac{b^2\sin(b)}{a^2\sin(a)} \quad\left(\text{or} \quad \frac{A}{B}=-\frac{a\cos(b)}{b\cos(a)}\quad\text{when $\sin(a)=0$}\right).
\]
and $a$, $b$ positive numbers satisfying $-a^2b^2=\omega$, $-a^2-b^2=\tau$, and $W_o(a)=a^3\sin(a)\cos(b)-b^3\sin(b)\cos(a)$.
\item We have $u= u_e(x)=D\cos{ax}+D\cos{bx}$ with $C$, $D$ real nonzero constants satisfying
\[
\frac{C}{D}=-\frac{b^2\cos(b)}{a^2\cos(a)}\quad\left(\text{or} \quad \frac{C}{D} =-\frac{a\sin(b)}{b\sin(a)}\quad\text{when $\cos(a)=0$}\right).
\]
and $a$, $b$ positive numbers satisfying $a^2b^2=\omega$, $b^2-a^2=\tau$, and $W_e(a)=b^3\cos(b)\sin(a)-a^3\cos(a)\sin(b)$.
\end{enumerate}
\item[(ii)] (degenerate regime) $\omega = -\tau^2/4$ and
\[
 u=u_e(x)=C\cos{ax}+Dx\sin{ax},
\]
where $a=\sqrt{|\tau|/2}\approx 1.1394$ and is the positive root of $\sin{2a}-2a/3=0$, with $C$, $D$ real nonzero constants satisfying
\[
  \frac{C}{D}=\frac{2\cos(a)-a\sin(a)}{a\cos(a)}\approx -0.4174.
\]
Here $\tau\approx -2.5967$ and $\omega\approx-1.6856$.
\end{enumerate}
\emph{(hyperbolic regime)} A number $\omega < -\tau^2/4$ is an eigenvalue only if it has an associated eigenfunction $u$ satisfying one of the following:
\begin{enumerate}
\item[(a)] We have $u= u_o(x)=A\cos{ax}\sinh{bx}+B\sin{ax}\cosh{bx}$ with $A$, $B$ real constants with $B$ nonzero and $a$, $b$ positive numbers satisfying\\ $-(a^2+b^2)^2=\omega$ and $2(b^2-a^2)=\tau$.
\item[(b)] We have $u= u_e(x)=C\sin{ax}\sinh{bx}+D\cos{ax}\cosh{bx}$ with $C$, $D$ real constants with $D$ nonzero and $a$, $b$ positive numbers satisfying\\ $-(a^2+b^2)^2=\omega$ and $2(b^2-a^2)=\tau$.
\end{enumerate}
\end{fact}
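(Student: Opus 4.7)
My plan is to mirror the proof of Fact~\ref{posclass}: factor the characteristic polynomial of the ODE~\eqref{1dimDE}, write down the general even and odd solutions in each regime, and impose the boundary conditions~\eqref{1dbcm},~\eqref{1dbcv} at $x = 1$ (symmetry handles $x = -1$ automatically). The characteristic polynomial $r^4 - \tau r^2 - \omega$ is quadratic in $r^2$, with discriminant $\tau^2 + 4\omega$, and I split the argument according to the sign of this discriminant. As a preliminary, I would note that $\omega < 0$ forces $\tau < 0$, since otherwise both integrals in the numerator of the Rayleigh quotient are nonnegative and no trial function can make $Q[u] < 0$.

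In the trigonometric regime (discriminant positive), both values of $r^2$ are negative reals, giving $r^2 = -a^2, -b^2$ with $a, b > 0$ satisfying $a^2 + b^2 = -\tau$ and $a^2 b^2 = -\omega$. The four real solutions are $\sin(ax), \cos(ax), \sin(bx), \cos(bx)$, whose odd and even combinations yield the stated forms for $u_o$ and $u_e$. Applying $Mu(1) = 0$ and $Vu(1) = 0$ and using $\tau = -(a^2 + b^2)$ produces a homogeneous $2 \times 2$ system in the two coefficients; nontriviality forces the determinant to vanish, and that determinant simplifies (up to a nonzero common factor) to $W_o(a)$ in the odd case and $W_e(a)$ in the even case. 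The coefficient ratios then drop out of either boundary row, and the parenthetical alternatives cover the degenerate subcases $\sin(a) = 0$ or $\cos(a) = 0$ by using the other row.

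In the degenerate regime (discriminant zero), the double root $r^2 = -a^2$ with $a = \sqrt{|\tau|/2}$ gives a general solution built from $\cos(ax), \sin(ax), x\cos(ax), x\sin(ax)$. For the even combination $C\cos(ax) + Dx\sin(ax)$, imposing the two boundary conditions and using $\tau = -2a^2$ together with $2\sin(a)\cos(a) = \sin(2a)$, I expect the $2 \times 2$ determinant to collapse to $a^2 - (3a/2)\sin(2a)$, vanishing precisely when $\sin(2a) = 2a/3$; the smallest positive root $a \approx 1.1394$ then yields the stated numerical values of $\tau$ and $\omega$, and the ratio $C/D$ drops out of either row. The analogous odd combination $A\sin(ax) + Bx\cos(ax)$ leads to $\sin(2a) = -2a/3$, which has no positive solution: for $0 < a \leq \pi/2$ the left side is nonnegative while the right side is strictly negative, and for $a > \pi/2$ we have $2a/3 > \pi/3 > 1 \geq -\sin(2a)$. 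Hence no odd eigenfunction appears in this regime.

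In the hyperbolic regime (discriminant negative), the values of $r^2$ are complex conjugates, and a short computation shows that the four roots of the characteristic polynomial take the form $\pm(b \pm ia)$ with $2(b^2 - a^2) = \tau$ and $(a^2 + b^2)^2 = -\omega$. The resulting real solutions $\cos(ax)\cosh(bx), \sin(ax)\sinh(bx)$ (even) and $\cos(ax)\sinh(bx), \sin(ax)\cosh(bx)$ (odd) give the claimed necessary forms, so no further boundary analysis is required; the nonvanishing of $B$ or $D$ is merely the nontriviality requirement. The main technical obstacle throughout is the degenerate case, where the polynomial-times-trigonometric basis makes $u^{\prime\prime}$ and $u^{\prime\prime\prime}$ unwieldy and some care is needed to collapse the $2 \times 2$ determinant into the clean transcendental $\sin(2a) = 2a/3$; otherwise each regime is a mechanical replay of the argument for positive eigenvalues.
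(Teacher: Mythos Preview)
Your outline for the trigonometric and degenerate regimes follows the paper's argument closely and is correct; in fact, your treatment of the degenerate case is cleaner than the paper's, which contains a typo swapping ``former'' and ``latter'' when identifying which of $\sin(2a)=\pm 2a/3$ has a positive root (your version, with the even equation $\sin(2a)=2a/3$ admitting the root $a\approx 1.1394$ and the odd equation having none, is the right one).

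The gap is in the hyperbolic regime. You write that ``the nonvanishing of $B$ or $D$ is merely the nontriviality requirement,'' but that is false: if $B=0$ and $A\neq 0$ then $u_o(x)=A\cos(ax)\sinh(bx)$ is a perfectly good nontrivial odd function, and likewise $u_e(x)=C\sin(ax)\sinh(bx)$ is nontrivial even when $D=0$. Ruling these out is exactly the content of the hyperbolic part of the Fact, and it requires imposing the boundary conditions. The paper does this: setting $B=0$ (or $D=0$) in \eqref{hypom}--\eqref{hypov} (or \eqref{hypem}--\eqref{hypev}) and eliminating the trigonometric factor leads to the relation $\tanh^2(b)=2+4b^2/|\tau|$, which is impossible since $\tanh^2(b)<1$. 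Without this step you have only shown that an eigenfunction must be a linear combination of the stated basis functions, not that the coefficient of $\sin(ax)\cosh(bx)$ (resp.\ $\cos(ax)\cosh(bx)$) is forced to be nonzero. So some boundary analysis \emph{is} required in this regime; it just stops short of a full characterization because the companion cases $A=0$ or $C=0$ cannot be excluded by the same trick (they lead instead to $\coth^2(b)=2+4b^2/|\tau|$, which can have solutions).
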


In the proof of each case, we begin by factoring the eigenvalue equation \eqref{1dimDE}, and then apply the boundary conditions to obtain conditions on the coefficients $A$ through $D$, and in the degenerate case, on the values of $\tau$ for which there is a solution.

\subsection*{The trigonometric regime: $-\tau^2/4<\omega<0$}
In the trigonometric regime, the eigenfunctions are linear combinations of trigonometric functions. Here the characteristic function for \eqref{1dimDE} is a fourth-degree polynomial with only purely imaginary roots.

\begin{proof}[Proof for the trigonometric regime]
In this case, \eqref{1dimDE} may be factored as
\[
(d_x^2+a^2)(d_x^2+b^2)u=0.
\]
with $a^2b^2=-\omega$, $-a^2-b^2=\tau$, and $a$ and $b$ taken to be nonnegative. Because $\omega<0$, neither $a$ nor $b$ can be zero. The even and odd solutions will then have the forms
\begin{equation}
 u_o(x)=A\sin(ax)+B\sin(bx) \quad\text{and}\quad u_e(x)=C\cos(ax)+D\cos(bx) \label{compression1}
\end{equation}
with boundary conditions:
\begin{align}
&-a^2A\sin(a)-b^2B\sin(b)=0 &\text{and~} &-ab^2A\cos(a)-a^2bB\cos(b)=0,\label{Codd}\\
&-a^2C\cos(a)-b^2D\cos(b)=0 &\text{and~} &ab^2C\sin(a)+a^2bD\sin(b)=0. \label{Ceven}
\end{align}
If $A=0$, then $-b^2B\sin(b)=0$ and $-a^2bB\cos(b)=0$; the only way to satisfy both conditions is if $B=0$.

If $B=0$, then $-a^2A\sin(a)=0$ and $-ab^2A\cos(a)=0$, and so $A=0$.

If $C=0$, then $-b^2D\cos(b)=0$ and $a^2bD\sin(b)=0$, thus $D=0$.

If $D=0$, then $-a^2C\cos(a)=0$ and $ab^2C\sin(a)=0$, requiring $C=0$.

For a given even or odd solution, given $a$ and $\tau$ (and hence $b$), we must turn to the boundary conditions to determine $A/B$ and $C/D$. It is these ratios that determine the form of the solutions, since multiplication by a nonzero constant does not change the eigenvalue or affect whether or not the boundary conditions are satisfied.

If $A$ and $B$ are nonzero, then
\[
\frac{A}{B}=-\frac{b^2\sin(b)}{a^2\sin(a)} \quad\left(\text{or} \quad =-\frac{a\cos(b)}{b\cos(a)}\quad\text{when $\sin(a)=0$}\right).
\]
If $C$ and $D$ are nonzero, then
\[
\frac{C}{D}=-\frac{b^2\cos(b)}{a^2\cos(a)}\quad\left(\text{or} \quad =-\frac{a\sin(b)}{b\sin(a)}\quad\text{when $\cos(a)=0$}\right).
\]

Armed with this information, let us consider possible eigenvalue crossings. Given $\tau$, suppose there exist both $u_e$ and $u_o$ nontrivial solutions as given in \eqref{compression1}, both with the same associated  eigenvalue, $\omega=-a^2b^2$. Since $a$ and $b$ are completely determined by $\tau$ and $\omega$, we know $A/B$ and $C/D$ from above, and so the solutions are determined up to multiplication by a constant.

By \eqref{Ceven}, we have $\cos(a)=0$ if and only if $\cos(b)=0$; similarly $\sin(a)=0$ if and only if $\sin(b)=0$ by \eqref{Codd}.
Now, $\cos(a)=\cos(b)=0$ implies $a=(2k+1)\pi/2$ and $b=(2l+1)\pi/2$ for some nonnegative integers $k$, $l$. Since $b^2=-a^2-\tau$, solving for $\tau$ yields
\[
\tau=-\frac{\pi^2}{4}\Big((2l+1)^2+(2k+1)^2\Big).
\]
These values for $a$ and $b$ satisfy the boundary conditions for both the even and odd solutions, so the odd and even eigenvalues will coincide at these values of $\tau$. Therefore, we expect the eigenvalue curves cross at these $\tau$ values.

Similarly, $\sin(a)=\sin(b)=0$ implies $a=k\pi$ and $b=l\pi$ for positive integers $k$, $l$. Again we solve for $\tau$, obtaining $\tau=-\pi^2(l^2+k^2)$. Again, these $a$ and $b$ satisfy both sets of boundary conditions, and so the eigenvalues would again coincide at these $\tau$ values. This is illustrated by Figure~\ref{crosspic}.
\begin{figure}\label{crosspic}
\centering
\includegraphics{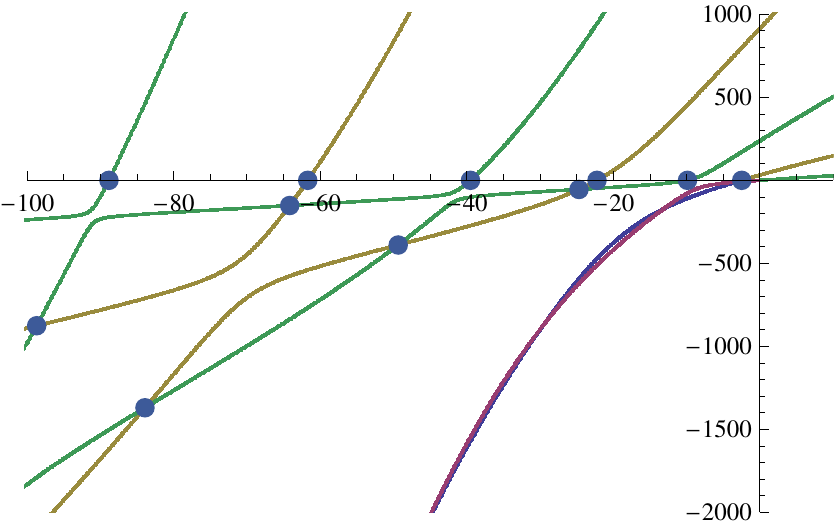}
\caption{Eigenvalue curves with expected crossing points. Green curves are odd eigenvalue branches; gold curves are even eigenvalue branches. The purple and blue curves are possible odd and even eigenvalue branches, respectively, in the hyperbolic regime. These curves were computed under the assumption that all linear combination coefficients are nonzero.} 
\end{figure}

The $\tau$ values for which $\omega=0$ is degenerate are also illustrated in Figure~\ref{crosspic}. These degeneracies occur where even and odd eigenvalue curves cross the line $\omega=0$, which is the eigenvalue curve for the constant eigenfunction.
\end{proof}

\subsection*{The degenerate regime: $\omega=-\tau^2/4$}
In the degenerate regime, the characteristic function has zero as a double root. We will show that there is an eigenvalue with an eigenfunction in the degenerate regime for only one value of $\tau$; that is, the eigenvalue branches only cross from the trigonometric regime to the hyperbolic regime once, when $\omega\approx -1.6856$ and $\tau \approx -2.5966$. The eigenfunction in this case is even.
\begin{proof}[Proof for the degenerate regime]
In this case, \eqref{1dimDE} factors as
\[
(d_x^2+a^2)^2u =0
\]
With $a^2=-\tau/2$. Since $\tau<0$, $a=\sqrt{|\tau|/2}$ is strictly positive. By elementary differential equation theory, the linearly independent solutions of the differential equation \eqref{1dimDE} are $\cos(ax)$, $\sin(ax)$, $x\cos(ax)$, and $x\sin(ax)$. As noted previously, we need only consider even and odd linear combinations of these, namely,
\[
u_o(x)=A\sin(ax)+Bx\cos(ax)  \quad \text{and} \quad  u_e(x)=C\cos(ax)+Dx\sin(ax).
\]
The boundary conditions then yield the relations
\begin{align*}
-a^2A\sin(a)-2aB\sin(a)-a^2B\cos(a) &= 0\\
-a^3A\cos(a)+a^2B\cos(a)+a^3B\sin(a) &=0\\
-a^2C\cos(a)+2aD\cos(a)-a^2D\sin(a) &= 0\\
a^3C\sin(a)+a^2D\sin(a)-a^3D\cos(a) &=0
\end{align*}

We will use these to find the values of $a$ (and thus $\tau$ and $\omega$) and the ratios of the coefficients that give us solutions to the boundary problems.

We begin by proving that if $u_o$ (resp $u_e$) is a nonconstant solution of the boundary problem, both $A$ and $B$ (resp. $C$ and $D$) must be nonzero.

For the odd eigenfunction, suppose $A=0$ but $B\neq0$; we seek a contradiction. The boundary conditions give us $-2a\sin(a)-a^2\cos(a)=0$ and $a^2\cos(a)+a^3\sin(a)=0$. Since $a>0$, we obtain $-2\sin(a)=a\cos(a)$ and $\cos(a)=-a\sin(a)$; thus $2\sin(a)=a^2\sin(a)$. So either $2=a^2$ or $\sin(a)=0$.

If $\sin(a)=0$, then $-a^2D\cos(a)=0$. But each term must be nonzero, so this is impossible, and $a^2=2$. We still must have $\cos(a)=-a\sin(a)$, but we quickly see this is impossible if $a=\sqrt{2}$.

Now suppose $B=0$ but $A\neq0$; we seek a contradiction. Boundary conditions and $a>0$ yield $\sin(a)=0=a\cos(a)$. Since $a\neq0$, both $\sin(a)$ and $\cos(a)$ must be zero, which is impossible.

We consider the even eigenfunction next. Suppose $D\neq0$ but $C=0$; we seek a contradiction.
The boundary conditions give us $2\cos(a)=a\sin(a)$ and $\sin(a)=a\cos(a)$, and combining these we obtain $2\cos(a)=a^2\cos(a)$. So either $2=a^2$ or $\cos(a)=0$.

If $\cos(a)=0$, then the first boundary condition becomes $-a^2B\sin(a)=0$. But $a\neq 0$, $B\neq 0$, and $\sin(a)$ and $\cos(a)$ cannot both be zero, so it is impossible to satisfy the boundary conditions when $\cos(a)=0$.

We must then have $a^2=2$. The boundary conditions still require $\sin(a)=a\cos(a)$, but a simple calculation shows that $a=\sqrt{2}$ does not satisfy this equality.

Now suppose $C\neq0$ but $D=0$; we again seek a contradiction. The boundary conditions then yield $-a^2\cos(a)=0=a^3\sin(a)$, so then $-\cos(a)=0$ and $a\sin(a)=0$. Since $a$ is strictly positive, we must have both $\cos(a)=0$ and $\sin(a)=0$, which is impossible.

Now that we have shown that $A/B$ and $C/D$ must be defined and nonzero if $u_o$ and $u_e$ are to be nonconstant solutions, the boundary conditions give us the following conditions on $a$:
\begin{align*}
\frac{2\sin(a)+a\cos(a)}{-\sin(a)} &=\frac{\cos(a)+a\sin(a)}{\cos(a)}&\text{for $u_o$,}\\
\frac{2\cos(a)-a\sin(a)}{\cos(a)}&=\frac{\sin(a)-a\cos(a)}{-\sin(a)} &\text{for $u_e$.}
\end{align*}
These simplify to
\begin{align*}
3\cos(a)\sin(a)+a&=0 &\text{for $u_o$,}\\
3\cos(a)\sin(a)-a&=0 &\text{for $u_e$.}
\end{align*}
These can be rewritten, using the double-angle formula, as
\begin{align*}
\sin(2a)+2a/3&=0 &\text{for $u_o$,}\\
\sin(2a)-2a/3&=0 &\text{for $u_e$.}
\end{align*}
It is easy to see graphically that the latter has only the solution $a=0$. The former has two nonnegative solutions, $a=0$ and $a\approx 1.13943$. We therefore have no odd solutions in this case, and exactly one even solution with $a\approx 1.13943$. In particular, this means that the curves given by the eigenvalues plotted against $\tau$ may cross the curve $\omega=-\tau^2/4$ for $\tau<0$ only once, at $\tau \approx -2.5966$.
\end{proof}

\subsection*{The hyperbolic regime: $-\tau^2/4>\omega$}
In this case, our characteristic equation $r^4-\tau r-\omega=0$ has non-real, non-purely-imaginary complex roots; the real part of each of these roots then gives us a hyperbolic function in the solution.  This is the most difficult case, and we are only able to obtain a partial result.

\begin{proof}[Proof for the hyperbolic regime]
Elementary differential equation theory gives us that the general solutions will be of the forms $\cos(ax)\sinh(bx)$, $\sin(ax)\sinh(bx)$, $\cos(ax)\cosh(bx)$, and $\sin(ax)\cosh(bx)$, with $\omega=-(a^2+b^2)^2$ and $\tau=2b^2-2a^2$; $a$ and $b$ are taken to be positive as usual. As before we need only consider the even and odd solutions
\begin{align*}
u_o(x)&=A\cos(ax)\sinh(bx)+B\sin(ax)\cosh(bx)\quad\text{and}\\
u_e(x)&=C\sin(ax)\sinh(bx)+D\cos(ax)\cosh(bx).
\end{align*}

Their associated boundary conditions are
\begin{align}
A\left(\tau\cos(a)\sinh(b)-4ab\sin(a)\cosh(b)\right)&=-B\left(\tau\sin(a)\cosh(b)+4ab\cos(a)\sinh(b)\right)\label{hypov}\\
A(a\cos(a)\cosh(b)+b\sin(a)\sinh(b))&=-B(a\sin(a)\sinh(b)-b\cos(a)\cosh(b))\label{hypom}\\
C\Big(\tau\sin(a)\sinh(b)+4ab\cos(a)\cosh(b)\Big)&=-D\left(\tau\cos(a)\cosh(b)-4ab\sin(a)\sinh(b)\right)\label{hypev}\\
C(a\sin(a)\cosh(b)-b\cos(a)\sinh(b))&=-D(a\cos(a)\sinh(b)+b\sin(a)\cosh(b))\label{hypem}.
\end{align}

We are able to show that the constants $B$ and $D$ must be nonzero.

Suppose $B=0$ but $A\neq0$. The boundary conditions \eqref{hypom} and \eqref{hypov} can both be solved for $\tan(a)$; combining the resulting equations we obtain $\tanh^2(b)=-\frac{4a^2}{\tau}$. Recall $\tau=2b^2-2a^2$ is negative; thus we have
$\tanh^2(b)=2+\frac{4b^2}{|\tau|}$.

Set $D=0$ while $C\neq 0$; then by the same process as above we again find $\tanh^2(b)=2+\frac{4b^2}{|\tau|}$.

The function $\tanh^2(x)$ is nonnegative for all $x$ and satisfies $\tanh^2(x)<1$ for all $x$. Thus the equation $\tanh^2(b)=2+\frac{4b^2}{|\tau|}$ has no solution $b>0$ for any $\tau<0$. Therefore is impossible to take $B=0$ or $D=0$ while $A$ or $C$ are nonzero.
\end{proof}

\begin{rmk}
Applying the same process in the case of $A=0$ or $C=0$ yields instead $\coth^2(b)=2+\frac{4b^2}{|\tau|}$, which has a solution $b$ for every $\tau<0$. Furthermore, numerical investigations in Mathematica suggest that in these cases the boundary conditions can be simultaneously satisfied for many values of $\tau$. 
\end{rmk}

\section*{Cascading}
Plotting the eigenvalues of the rod under compression against $\tau$, an interesting feature appears. Each odd eigenvalue branch in the trigonometric regime stays ``close'' to a line $\tau\mu_k$, where $\mu_k$ is a Neumann eigenvalue with an odd eigenfunction, and then sharply drops down and travels along the line $\tau\mu_{k+1}$ corresponding to the next odd Neumann eigenvalue. Similar behavior occurs with the even eigenvalue branches and the corresponding even Neumann eigenvalues. This is very much like the cascading phenomenon exhibited by Schr\"odinger eigenvalues discussed in \cite{simon}. 

 \begin{figure}[th!]
  \includegraphics{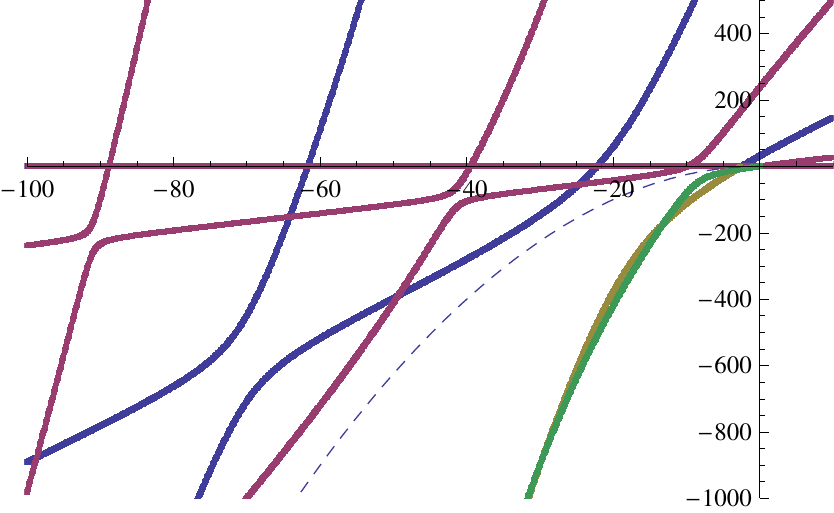}
\caption{Eigenvalues of the free rod under compression. Blue curves correspond to eigenvalues of even functions; purple curves indicate the eigenvalues of odd functions. The dashed curve is the regime boundary $\omega=-\tau^2/4$. The green and gold curves are possible odd and even eigenvalue branches, respectively, in the hyperbolic regime. These curves were computed under the assumption that all linear combination coefficients are nonzero.} 
 \end{figure}

\chapter{Future directions}\label{furtherdirec}

In this chapter we examine several problems related to the free plate isoperimetric problem. One can generalize the plate Rayleigh quotient to plates made of material having a nonzero Poisson's Ratio; unfortunately, the proofs of the theorems cannot be generalized so far.  Other known isoperimetric inequalities for membranes can be considered for the corresponding plate problems, namely considering a Szeg\H o-type problem for the free plate and a PPW-type problem for the clamped plate. Finally, it may also be worth investigating the plate isoperimetric problems in non-Euclidean spaces.

\section*{Poisson's Ratio}
One generalization of the free plate problem is to account for Poisson's Ratio, a property of the material of the plate that describes how a rectangle of the material stretches or shrinks in one direction when stretched along the perpendicular direction. Our Rayleigh quotient and work so far all hold for a material where Poisson's Ratio is zero. Most real-world materials have $\sigma\in[0,1/2]$, although there exist some materials with negative Poisson's Ratio. 

We will assume $\sigma \in (-\infty,1)$ for mathematical reasons made clear below. The generalized Rayleigh quotient is given by
\begin{equation}\label{RQsigma}
Q[u] = \frac{\int_\Omega (1-\sigma) |D^2 u|^2 + \sigma(\Delta u)^2 + \tau |D u|^2\,dx}{\int_\Omega |u|^2\,dx}
\end{equation}
and reduces to our previous quotient when $\sigma=0$. Note also that if we imposed clamped boundary conditions on our plate, Fact~\ref{Hessianwithclamped} would then allow us to write the Hessian term as $(1-\sigma) |\Delta u|^2$. Dependence on $\sigma$ would thus vanish from the Rayleigh quotient. Thus, when the plate is clamped, Poisson's ratio has no effect on the spectrum.

Free and supported plates with nonzero Poisson's ratio have been considered previously: see \cite{P58}, \cite{NF55}, and \cite{LW73}. Payne \cite{P58} considered arbitrary Poisson's ratio but zero tension. 

We can show coercivity of our new quotient $Q$ for $\sigma<1$ by discarding the Laplacian term and repeating our previous argument:
\begin{align*}
a(u,&u)+K\|u\|_{L^2}^2\\
&=(1-\sigma)\|D^2u\|^2+\sigma\|\Delta u\|^2 +\tau\|D u\|^2 +K \|u\|^2\\
&\geq(1-\sigma-\delta)\|D^2u\|_{L^2}^2+\left(\frac{\delta}{\epsilon}-\delta-|\tau|\right)\|Du\|^2_{L^2}+\left(K-\frac{C\delta}{\epsilon^2}-\delta\right)\|u\|_{L^2}^2.
\end{align*}
This argument works for each value of $\tau$ provided we choose $\delta$ appropriately and the constant $K$ large enough. Note that when $\sigma = 1$, we lose the entire Hessian term and hence coercivity; this fact is why we only consider $\sigma < 1$.

Following our earlier derivation, we obtain the same eigenvalue equation
\begin{equation*}
\Delta \Delta u - \tau \Delta u = \omega u,
\end{equation*}
along with new natural boundary conditions on $\partial\Omega$:
\begin{align*}
&Mu|_{\partial\Omega} := \left.(1-\sigma)\frac{\partial^2u}{\partial n^2}+\sigma\Delta u\right|_{\partial\Omega} = 0\\
&Vu|_{\partial\Omega} := \left.(\tau\frac{\partial u}{\partial n}-(1-\sigma)\sdiv\left(\sproj\left[(D^2u)n\right]\right)-\frac{\partial\Delta u}{\partial n}\right|_{\partial\Omega} = 0.
\end{align*}
As before, $n$ is the outward unit normal to the surface, $\sdiv$ the surface divergence, and $\sproj$ the projection onto the tangent space of $\partial\Omega$.

The generalization to nonzero $\sigma$ does not change the eigenvalue equation and hence the general form of solutions is preserved. However, the change in the Rayleigh quotient affects the proof of Theorem~\ref{thm2}, which identified the fundamental mode of the ball.  This in turn affects the proof of the isoperimetric inequality. Below I will discuss where the proof of Theorem~\ref{thm2} breaks down, first looking at the failure of the argument for the $l\geq 1$ case, and then observing that a modification of the argument in the $l=0$, $1$ case can be made for nonzero $\sigma$.

Considering eigenfunctions of the form $u=R(r)\Yl$ as in the proof of that theorem and following our previous calculations, the numerator of the Rayleigh quotient $N[u]$ becomes
\begin{align*}
(1-&\sigma)\int_0^1\left((\Rpp)^2+\frac{2k+d-1}{r^2}(\Rp)^2-\frac{6k}{r^3}R\Rp+\frac{k(k-d+4)}{r^4}R^2\right)r^{d-1}\,dr\\
&\qquad+\sigma\int_0^1\left(\Rpp(r) + \frac{(d-1)\Rp(r)}{r}-\frac{kR(r)}{r^2}\right)^2r^{d-1}\,dr\\
&\qquad+\tau\int_0^1\left((\Rp)^2+\frac{R^2}{r^2}k\right)r^{d-1}\,dr
\end{align*}
with $k=l(l+d-2)$. When $\sigma=0$ and considering positive tension we were able to rewrite the numerator of the Rayleigh quotient to show was an increasing function of $k$ for $k\geq 1$ and hence an increasing function of $l$ for $l\geq 1$. Unfortunately, when our Poisson's Ratio is nonzero, we can no longer complete the square to obtain increasing functions of $k$. There is a cross-term involving both $k$ and $\Rpp$, but $(\Rpp)^2$ involves no factor of $k$, so we cannot complete a square to transform these terms into a something that is an increasing function of $k$ for all $R$.  At present I cannot see how one could rewrite these terms to have an always-increasing function of $k$; furthermore, numerical investigations with Mathematica suggest that for some choices of $\tau$ and $\sigma$, the fundamental mode for the ball corresponds to $l=2$ or greater.

The proof that the lowest eigenvalue corresponding to the eigenfunctions $(j_1(ar)+\gamma b_1(br))Y_1$ is lower than that corresponding to $(j_0(ar)+\gamma b_0(br))Y_0$ still holds; we are able to show that $MV_1(a)$ has a root in $(0,\ainf)$ and that the first root of $MV_0(a)$ falls after $\ainf$. So while we are unable to prove the precise form of the fundamental mode, we still have established that the fundamental mode corresponds to some $l\geq 1$, and hence has angular dependence. Thus we have lost a lot of needed information about our radial function, and only know that 
\[
\rho = j_l(ar)+\gamma i_l(br)\qquad\text{on $[0,1]$}
\]
for some positive $l$ not necessarily $l=1$.  Hence our proof for the isoperimetric inequality is also adversely affected when $\sigma\neq0$, and I cannot see how to complete it. It is not even clear whether the fundamental tone should be maximal for the ball when $\sigma\neq0$.

\section*{Harmonic mean of low eigenvalues}
In two dimensions, Szeg\H o was able to prove a stronger statement of the Szeg\H o-Weinberger inequality using conformal mappings \cite{S50,Serrata}. Specifically, he proved that the sum
\[
\frac{1}{\mu_1}+\frac{1}{\mu_2}
\]
is minimal for a disk. In other words, the harmonic mean of $\mu_1$ and $\mu_2$ is maximal for the disk.  Our investigation in Chapter~\ref{fundmodtens} with the moment of inertia suggests a similar result for the free plate, since the moment of inertia is minimal for a ball. That is, for the free plate, we conjecture
\[
\frac{1}{d}\sum_{i=1}^d\frac{1}{\omega_i(\Omega)}\geq \frac{1}{\omega_1(\Ostar)}.
\]

\section*{Curved spaces}
We have taken our region $\Omega$ to be in Euclidian space $\RR^d$, but we could consider the same eigenvalue problem on a region in spaces of constant curvature: the sphere and hyperbolic space. Other eigenvalue inequalities have been proven in these spaces \cite{Asummary}. In particular, the Szeg\H o-Weinberger inequality for was proved for domains on the unit sphere by Ashbaugh and Benguria \cite{ABSW}. Another direction of generalization would be Hersch-type bounds for metrics on the whole sphere or torus; see \cite{hersch}.

\appendix*
\chapter{Calculus facts}\label{calcfact}
This appendix collects some calculus facts used in the proof of Theorem~\ref{thm1}. Below, $\rho(r)$ is any smooth function of the radial coordinate and the $x_i$ are the usual rectangular coordinates. We also define functions 
\[
u_k=x_k\frac{\rho(r)}{r}.
\]

\begin{fact}\label{derivs}\label{scoord} We have the sums
\begin{align*}
\sum_{k=1}^d|u_k|^2&=\rho^2\\
\sum_{k=1}^d|Du_k|^2&=\frac{d-1}{r^2}\rho^2+(\rp)^2\\
\sum_{k=1}^d|D^2u_k|^2&= (\rpp)^2+\frac{3(d-1)}{r^4}(\rho-r\rp)^2\\
\sum_{k=1}^d(\Delta u_k)^2 &=\left((n-1)\frac{A}{r^2}-\rpp\right)^2.
\end{align*}
\end{fact}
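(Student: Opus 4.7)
The plan is to compute each of the four sums by direct differentiation of $u_k = x_k\rho(r)/r$, using only $\sum_{k=1}^d x_k^2 = r^2$ and $\partial r/\partial x_j = x_j/r$. The identity $\sum_k|u_k|^2 = \rho^2$ is immediate. For the gradient sum, I would first compute
\[
\partial_j u_k = \delta_{jk}\frac{\rho}{r} + \frac{x_jx_k(r\rp-\rho)}{r^3},
\]
square, sum over $j$ and $k$, collapse the cross terms via $\sum x_k^2 = r^2$, and expand $(r\rp-\rho)^2$; the algebra produces exactly $(d-1)\rho^2/r^2 + (\rp)^2$.

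For $\sum(\Delta u_k)^2$ the slickest route is to recognize $u_k = \rho(r)(x_k/r)$, where $x_k/r$ is a spherical harmonic of degree $l=1$, so that the standard radial--angular separation of $\Delta$ (with $l(l+d-2)=d-1$) gives
\[
\Delta u_k = \left(\rpp + \frac{d-1}{r}\rp - \frac{d-1}{r^2}\rho\right)\frac{x_k}{r}.
\]
Squaring and summing over $k$ kills the angular factor via $\sum_k x_k^2/r^2 = 1$, producing $\bigl[\rpp - (d-1)(\rho-r\rp)/r^2\bigr]^2$ as claimed.

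The main obstacle is the Hessian sum. Setting $\phi(r) := \rho/r$ and differentiating twice yields
\[
\partial_i\partial_j u_k = A\bigl(\delta_{ik}x_j + \delta_{ij}x_k + \delta_{jk}x_i\bigr) + B\,x_ix_jx_k,
\]
where $A = (r\rp-\rho)/r^3$ and $B = (r^2\rpp - 3r\rp + 3\rho)/r^5$. Squaring this expression and summing over $i,j,k$ reduces to three tensor contractions, each easily evaluated using $\sum x_k^2 = r^2$ together with the observation that $\delta_{ik}\delta_{ij}$ equals $1$ only when $i=j=k$:
\[
\sum_{i,j,k}\bigl(\delta_{ik}x_j+\delta_{ij}x_k+\delta_{jk}x_i\bigr)^2 = 3(d+2)r^2, \qquad \sum_{i,j,k}x_ix_jx_k\bigl(\delta_{ik}x_j+\delta_{ij}x_k+\delta_{jk}x_i\bigr) = 3r^4,
\]
together with $\sum_{i,j,k}x_i^2x_j^2x_k^2 = r^6$.

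Writing $P := r\rp - \rho$, these contractions assemble into $r^4\sum_k|D^2u_k|^2 = 3(d+2)P^2 + 6P(r^2\rpp - 3P) + (r^2\rpp - 3P)^2$. The delicate but decisive step is the coefficient arithmetic $3(d+2) - 18 + 9 = 3(d-1)$, which simultaneously annihilates the mixed $Pr^2\rpp$ terms and leaves $3(d-1)P^2 + r^4(\rpp)^2$; dividing by $r^4$ produces the claimed identity. That cancellation is the one piece I would carry out in full detail, since a sign or combinatorial slip there would hide the final structure.
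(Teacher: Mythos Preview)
Your argument is correct and follows essentially the same direct-computation route as the paper: compute $\partial_j u_k$ and $\partial_i\partial_j u_k$ in Cartesian coordinates, abbreviate the radial combinations, then square and collapse the index sums using $\sum x_k^2=r^2$. The only minor variation is that you obtain $\Delta u_k$ via the radial--angular separation of the Laplacian on the spherical harmonic $x_k/r$ rather than by tracing the Hessian formula, and you spell out the three tensor contractions (and the coefficient check $3(d+2)-18+9=3(d-1)$) that the paper compresses into ``after collapsing the sums and simplifying.''
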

\begin{proof}
The first is immediate from $x_1^2+\dots+x_d^2=r^2$.
Now note that
\[
\frac{\partial r}{\partial x_k} = \frac{x_k}{r},
\]
so that
\[
\frac{\partial u_k}{\partial x_i} = \delta_{ik}\frac{\rho}{r}-\frac{x_ix_k}{r^3}(\rho-r\rp).
\]
Thus
\[
\frac{\partial^2 u_k}{\partial x_i\partial x_j}
=-\frac{x_k\delta_{ij}+x_i\delta_{jk}+x_j\delta_{ik}}{r^3}(\rho-r\rp)+\frac{x_ix_jx_k}{r^3}\left(\frac{3(\rho-r\rp)}{r^2}+\rpp\right),
\]
and
\[
\frac{\partial^2u_k}{\partial x_i^2}=-\frac{x_k+2x_i\delta_{ik}}{r^3}(\rho-r\rho^\prime)+\frac{x_i^2x_k}{r^3}\left(\frac{3(\rho-r\rp)}{r^2}+\rpp\right).
\]
Write $A = (\rho-r\rp)$ and $B =3r^{-2}(\rho-r\rp)+\rpp$. Then the sum of the gradients becomes
\begin{align*}
\sum_{k=1}^d|Du_k|^2 &=\sum_{k=1}^d\sum_{i=1}^d\left(\delta_{ik}\frac{\rho}{r}-\frac{x_ix_k}{r^3}A\right)^2\\
&=(d-1)\frac{\rho^2}{r^2}+(\rp)^2,
\end{align*}
by using the $\delta_{ik}$ to simplify.
The sum of the Hessian terms becomes
\begin{align*}
\sum_{k=1}^d|D^2u_k|^2&=\sum_{k=1}^d\sum_{i=1}^d\sum_{j=1}^d\left(-\frac{x_k\delta_{ij}+x_i\delta_{jk}+x_j\delta_{ik}}{r^3}A+\frac{x_ix_jx_k}{r^3}B\right)^2\\
&=(\rpp)^2+3(d-1)\frac{A^2}{r^4},
\end{align*}
after collapsing the sums and simplifying. Similarly, the sum of the Laplacians becomes
\begin{align*}
\sum_{k=1}^d(\Delta u_k)^2 &=\sum_{i,j,k=1}^d\left(-x_k\frac{1+2\delta_{ik}}{r^3}A+\frac{x_i^2x_k}{r^3}B\right)\left(-x_k\frac{1+2\delta_{k,j}}{r^3}A+\frac{x_j^2x_k}{r^3}B\right)\\
&=\left((d-1)\frac{A}{r^2}-\rpp\right)^2.\qedhere
\end{align*}
\end{proof}
The next two facts express the norm of the Hessian matrix in terms of the Laplacian and a divergence.
\begin{fact}\label{ptwise} If $u(x)$ is $C^3$-smooth, then
\begin{align*}
|D^2u|^2 &= \frac{1}{2}\Big(\Delta|Du|^2 -D(\Delta u)\cdot D\ubar-D(\Delta\ubar)\cdot Du\Big)\\
&=\frac{1}{2}D\cdot\Big(D|Du|^2-\Delta uD\ubar-\Delta\ubar Du\Big)+|\Delta u|^2.
\end{align*}
Here $u$ could be complex-valued, with $\ubar$ denoting its complex conjugate.
\end{fact}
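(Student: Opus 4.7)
The plan is to prove both identities by straightforward index-notation computations, where the only inputs are the product rule, the definitions of $\Delta$, $D\cdot$, and $|D^2u|^2 = \sum_{i,j}\overline{u_{x_ix_j}}u_{x_ix_j}$, together with the equality of mixed partials (which requires only $C^2$; we ask $C^3$ so that $\Delta|Du|^2$ exists classically).

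For the first identity, I would start from
\[
|Du|^2 = \sum_i \overline{u_{x_i}}\,u_{x_i},
\]
apply $\partial_{x_j}$ twice, and sum over $j$. The product rule yields four terms in each $\partial_{x_j}^2$; after collecting, two of them combine to give $2\,|D^2u|^2$, and the other two give $\sum_i \overline{(\Delta u)_{x_i}}\,u_{x_i} + \sum_i \overline{u_{x_i}}\,(\Delta u)_{x_i}$, i.e., $D(\Delta\ubar)\cdot Du + D(\Delta u)\cdot D\ubar$. Rearranging gives exactly the first formula. This step is routine bookkeeping.

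For the second identity, I would pass from the first identity by rewriting each of the three terms as a divergence plus a correction. Specifically, $\Delta|Du|^2 = D\cdot D|Du|^2$ trivially, while
\[
D\cdot(\Delta u\, D\ubar) = D(\Delta u)\cdot D\ubar + \Delta u\,\Delta\ubar,
\qquad
D\cdot(\Delta\ubar\, Du) = D(\Delta\ubar)\cdot Du + \Delta\ubar\,\Delta u,
\]
so that
\[
D(\Delta u)\cdot D\ubar + D(\Delta\ubar)\cdot Du = D\cdot(\Delta u\, D\ubar) + D\cdot(\Delta\ubar\, Du) - 2|\Delta u|^2.
\]
Substituting into the first identity and collecting divergences gives the second formula.

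There is no real obstacle here; the whole lemma is an exercise in the product rule. The only thing to be slightly careful about is keeping the complex conjugates straight (since the Hessian norm is defined as $\sum|u_{x_ix_j}|^2$, not $\sum u_{x_ix_j}^2$), but by writing $|D^2u|^2 = \sum_{i,j}\overline{u_{x_ix_j}}\,u_{x_ix_j}$ from the outset, the computation proceeds symmetrically in $u$ and $\ubar$ and the desired symmetric form drops out automatically.
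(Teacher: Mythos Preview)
Your proposal is correct and is essentially the same computation as the paper's proof. The paper chains the two identities together in one string of equalities (starting from the divergence form, expanding to the first form, then expanding $\Delta|Du|^2$ in indices to reach $2|D^2u|^2$), whereas you prove the first identity directly and then pass to the second via the product rule, but the underlying index manipulations are identical.
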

\begin{proof} We have
\begin{align*}
D\cdot\Big(D&|Du|^2-\Delta uD\ubar-\Delta\ubar Du\Big)+2|\Delta u|^2\\
&=\Delta|Du|^2-D(\Delta u)\cdot D\ubar-\Delta u \Delta\ubar-D(\Delta\ubar)\cdot Du-\Delta\ubar\Delta u+2|\Delta u|^2\\
&=\Delta|Du|^2-D(\Delta u)\cdot D\ubar-D(\Delta\ubar)\cdot Du\\
&=\sum_{k,l=1}^d\Big(2u_{x_kx_l}\ubar_{x_kx_l}+u_{x_k}\ubar_{x_kx_lx_l}+u_{x_kx_lx_l}\ubar_{x_k}\Big)\\
&\qquad-\sum_{j,k=1}^du_{x_kx_kx_l}\ubar_{x_l}-\sum_{j,k=1}^d\ubar_{x_kx_kx_l}u_{x_l}\\
&=2|D^2u|^2.\qedhere
\end{align*}
\end{proof}
\begin{fact}\label{Hessianwithclamped} If $u\in C^3(\overline{\Omega})$ and $Du=0$ on $\partial\Omega$, then
\[
\int_\Omega|D^2u|^2\,dx=\int_\Omega|\Delta u|^2\,dx.
\]
\end{fact}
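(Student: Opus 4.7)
The plan is to integrate the pointwise identity from Fact~\ref{ptwise} over $\Omega$ and use the clamped condition $Du = 0$ on $\partial\Omega$ to kill the resulting boundary term. Starting from
\[
|D^2u|^2 = \tfrac{1}{2}\,D\cdot\Big(D|Du|^2-\Delta u\,D\ubar-\Delta\ubar\,Du\Big)+|\Delta u|^2,
\]
I would integrate over $\Omega$ and apply the Divergence Theorem to the first term, producing a surface integral of
\[
V:=D|Du|^2 - \Delta u\,D\ubar - \Delta\ubar\,Du
\]
dotted with the outward normal $n$.

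The key observation is that \emph{every term of $V$ vanishes identically on $\partial\Omega$}, not merely its normal component. For the second and third terms, this is immediate: $Du\equiv 0$ on $\partial\Omega$ forces $D\ubar\equiv 0$ there too, so $\Delta u\,D\ubar$ and $\Delta\ubar\,Du$ are zero on $\partial\Omega$. For the first term, expand
\[
D|Du|^2 = \sum_{k=1}^d \Big(u_{x_k x_\bullet}\,\ubar_{x_k} + u_{x_k}\,\ubar_{x_k x_\bullet}\Big),
\]
where each summand carries a factor of either $u_{x_k}$ or $\ubar_{x_k}$; these vanish on $\partial\Omega$ by the clamped hypothesis. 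Hence $V|_{\partial\Omega}=0$, and the surface integral contributes nothing.

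What remains is
\[
\int_\Omega |D^2 u|^2\,dx = \int_\Omega |\Delta u|^2\,dx,
\]
as desired. The $C^3$ regularity ensures that the integration by parts embodied in the Divergence Theorem is justified, and $\overline{\Omega}$-smoothness of $u$ guarantees the boundary values of $Du$ make sense classically. There is no real obstacle here once Fact~\ref{ptwise} is in hand; the only thing to check carefully is that the clamped condition really forces every component of $V$, and not just $V\cdot n$, to vanish on the boundary, which is what the observation above confirms.
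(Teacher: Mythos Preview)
Your proof is correct and follows exactly the same approach as the paper: integrate the pointwise identity from Fact~\ref{ptwise} over $\Omega$, apply the Divergence Theorem, and use $Du=0$ on $\partial\Omega$ to kill the boundary term. The paper states this in a single sentence, while you have helpfully spelled out why each piece of the vector field $V$ vanishes on the boundary.
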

\begin{proof} Integrate Fact~\ref{ptwise} and apply the divergence theorem.
\end{proof}
Thus if $u$ satisfies clamped boundary conditions $u=\partial u/\partial n=0$, then the Hessian term in the Rayleigh quotient \eqref{RQ} can be replaced with $|\Delta u|^2$.

\backmatter

\end{document}